\documentclass[11pt]{amsart}
\pdfoutput=1
\usepackage{amsmath,amssymb,amsthm}
\usepackage[colorlinks=true,linkcolor=blue,citecolor=blue,urlcolor=blue]{hyperref}
\hypersetup{pdftitle={An exotic S^2 x S^2 and an exotic CP^2 \# CP^2-bar},
  pdfauthor={Bernd Johannes Wuebben}}
\usepackage{microtype}
\usepackage{booktabs}
\usepackage{array}
\usepackage{adjustbox}
\usepackage{tikz}
\usetikzlibrary{calc}
\usepackage{geometry}
\newtheorem{theorem}{Theorem}[section]
\newtheorem{proposition}[theorem]{Proposition}
\newtheorem{lemma}[theorem]{Lemma}
\newtheorem{corollary}[theorem]{Corollary}

\theoremstyle{definition}
\newtheorem{remark}[theorem]{Remark}
\newtheorem{convention}[theorem]{Convention}

\theoremstyle{plain}
\newtheorem{mainthm}{Theorem}

\newcommand{\CP}{\mathbb{CP}}
\newcommand{\bCP}{\overline{\mathbb{CP}}}
\newcommand{\Z}{\mathbb{Z}}

\newcommand{\homeo}{\cong_{\mathrm{homeo}}}
\newcommand{\Int}{\operatorname{int}}
\newcommand{\KS}{\mathrm{KS}}
\newcommand{\tp}{\tilde\psi}
\newcommand{\tf}{\tilde\varphi}

\newcommand{\partheading}[2]{%
  \par\bigskip\bigskip
  \begin{center}
    \normalfont\scshape\large Part #1.\ #2
  \end{center}
  \par\medskip\nobreak}

\title[An exotic $S^2\times S^2$ and an exotic $\CP^2\#\bCP^2$]{An exotic $S^2\times S^2$\\ and an exotic $\CP^2\#\bCP^2$}
\author{Bernd Johannes Wuebben}
\subjclass[2020]{57K40 (primary); 57K10, 57K41, 57R55, 53D35, 57M05, 20F05,
20F10, 20-08 (secondary)}
\keywords{4-manifolds, exotic smooth structures, knot sliceness, Luttinger
surgery, symplectic doubles, Hambleton--Kreck classification, surface bundles,
fundamental group, Seifert--van Kampen theorem, coset enumeration,
Knuth--Bendix completion}
\date{August 16, 2026}

\begin{document}

\begin{abstract}
We prove that a specified Lidman--Piccirillo piece $V$, a symplectic $4$-manifold
with the homology of $S^2\times D^2$ built from a genus-$2$ surface bundle
over a once-punctured torus by two Luttinger surgeries, is simply connected,
for an explicit permitted choice of the two surgery parametrizations. Three
consequences follow. The symplectic double $Z=V\cup_\sigma V$ is homeomorphic
but not diffeomorphic to $S^2\times S^2$. The Lidman--Piccirillo manifolds
$B$ and $W$ are homeomorphic. Since the figure-eight knot is slice in $B$ and
not in $W$, they are the first pair of homeomorphic closed $4$-manifolds
distinguished by unconstrained knot slicing, that is, by sliceness with no
constraint on the homology class of the slice disk; detecting smooth
structure this way goes back to Casson. Finally, the regluing of Lidman and
Piccirillo's Theorem~2 applied to $Z$ yields a closed simply connected
$4$-manifold homeomorphic but not diffeomorphic to $\CP^2\#\bCP^2$. The
consequences follow from the simple-connectivity statement by the
classifications of Freedman and of Hambleton--Kreck, together with a
rigidity analysis of the surgery parameters. The fundamental group is
computed in the style of Baldridge and Kirk, from explicit based
representatives of every meridian and Lagrangian push off, and the resulting
relation system is decided by coset enumeration, after calibration on two
configurations whose answers are known
independently. The development calculations and finite-presentation decisions
can be reproduced from the ancillary files.
\end{abstract}

\maketitle

\section{Introduction}\label{sec:intro}

One approach to distinguishing smooth structures on $4$-manifolds, going back
to Casson, is to find a knot that is smoothly slice in one manifold and not in
another with the same underlying topology. Lidman and Piccirillo~\cite{LP25}
recently carried out the first successful version of this program at the level
of cohomology rings:

\begin{theorem}[{\cite[Theorem 1]{LP25}}]\label{thm:LP}
There are spin rational homology four-spheres $B$ and $W$ with $H_1=\Z/2$ and
isomorphic integer cohomology rings such that the figure-eight knot $4_1$ is
slice in $B$ but not in $W$.
\end{theorem}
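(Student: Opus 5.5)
The plan is to build $B$ and $W$ from a common topological template, so that the cohomology rings agree for formal reasons, and then handle sliceness and non-sliceness by separate mechanisms. For the slice side I would start from the classical fact that $4_1$, being strongly negative amphichiral, bounds a smooth disk $D$ in a rational homology ball $Q$ with $H_1(Q)=\Z/2$ (Kawauchi's construction, realized by a Fintushel--Stern style ribbon move). Then I would set
\[
  B = Q\cup_{\partial} P,
\]
where $P$ is a rational homology ball with $\partial P\cong -\partial Q$ chosen so that $B$ is spin and $H_1(B)=\Z/2$. Since $D\subset Q\subset B$, the knot $4_1$ is slice in $B$ tautologically, after removing a small ball near a boundary point of $D$.

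For $W$ I would replace one of the two halves by a piece $Q'$ with the same boundary and the same homological data ($H_*(Q')\cong H_*(Q)$, same linking form and same spin structure on the boundary), but carrying rigid smooth information. The natural choice is a symplectic or Luttinger-surgered piece, in the spirit of the pieces $V$ discussed in the abstract. The cohomology ring comparison then reduces to a Mayer--Vietoris computation. With rational homology spheres and $H_1=\Z/2$, the only nontrivial ring data are $H^1(\,\cdot\,;\Z)=0$, $H^2\cong\Z/2$ and $H^3\cong 0$, together with the torsion linking and Bockstein data. These are determined by the boundary gluing and by the images of $H_1(\partial)\to H_1$ of each half, which I would arrange to match.

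The non-sliceness of $4_1$ in $W$ is the main obstacle, and I would attack it through the trace. A slice disk for $K$ in $W$ is equivalent to a smooth embedding of the $0$-trace $X_0(K)$ into $W$, with the core sphere $S$ of square $0$ representing a torsion class. Because $S$ is rationally null, I would pass to the double cover $\widetilde W$ corresponding to $H_1=\Z/2$. There $S$ lifts to two disjoint square-zero spheres whose classes are at most $2$-torsion. I would then show that $\widetilde W$ (or a fiber sum containing it) has nonvanishing Seiberg--Witten or Bauer--Furuta invariant, inherited from the symplectic piece $Q'$. Next I would use the genus-one Seifert surface of $4_1$ together with its amphichirality to produce an embedded essential torus or sphere of square zero meeting the symplectic region. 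This would contradict the adjunction inequality, or the vanishing theorem for manifolds containing essential square-zero spheres.

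The delicate points are twofold. First, one must check that the lifted sphere is homologically essential enough to trigger the vanishing theorem; if it is not, I would instead use the Heegaard Floer $d$-invariant/spin$^c$ obstruction for $\partial(W\setminus X_0(4_1))$. Second, one must verify that the gluing does not destroy the Seiberg--Witten invariant, which I would control via the product formula along the relevant boundary.
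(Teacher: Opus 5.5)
Your outline has the right shape for half of the argument (trace embedding, lift to the double cover, a symplectic genus bound). There are, however, genuine gaps. The most serious is in the case you treat as a fallback.

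\textbf{The null-homologous case.} Since $H_2(W)\cong\Z/2$, a slice disk for $4_1$ may be null-homologous. The closed surface it produces is then null-homologous in $W$ and in its double cover, so no Seiberg--Witten or adjunction argument can see it. Your $d$-invariant fallback cannot work. The knot $4_1$ is rationally slice, and the torsion $d$-invariants of $S^3_0(4_1)$ equal those of $S^3_0(U)=S^1\times S^2$. Since the unknot does bound null-homologous disks in $W$, an obstruction built only from these $d$-invariants and homological data would obstruct the unknot too. The missing idea is the Arf invariant, used through the spin structure. In the spin manifold $W$ a null-homologous disk is characteristic, and the relative Rokhlin theorem (Klug) then forces $\mathrm{Arf}(K)=0$, contradicting $\mathrm{Arf}(4_1)=1$. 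This is how Lidman--Piccirillo dispose of that case (recalled in Step~3 of the proof of Theorem~\ref{thm:reduction}). It is also why the slice disk in $B$ must be essential.

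\textbf{The essential case.} Here there is no ``core sphere''. The natural closed surface is the disk capped by a Seifert surface, and since $g_4(4_1)=1$ you get a square-zero torus $T$. So vanishing theorems for essential spheres are beside the point. Its lift $\tilde T$ to the double cover $Z$ is essential because $\pi_*[\tilde T]=[T]\neq0$. Moreover $H_2(Z)$ is torsion-free (since $H_1(Z)=0$), so the lifted classes are not ``at most $2$-torsion''.

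More importantly, nonvanishing Seiberg--Witten invariants plus adjunction do not forbid essential square-zero tori: the elliptic fiber of a K3 surface satisfies adjunction with $K\cdot T=0$. Also $b_2^+(Z)=1$, so the usual $b^+>1$ statements need chamber care. What closes the argument is the explicit structure of $H_2(Z)$:
\begin{itemize}
\item it is spanned by the symplectic genus-$2$ fiber $F$ and section $\hat\Gamma$ with hyperbolic form, so every nonzero square-zero class is $kF$ or $k\hat\Gamma$;
\item these classes are represented by connected symplectic surfaces of genus $|k|+1$;
\item by the symplectic Thom conjecture (no $b^+>1$ hypothesis) they therefore have minimal genus at least $2$ (Lemma~\ref{lem:notori}).
\end{itemize}

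\textbf{The construction.} You never construct $Q'$ or $P$. The actual template is not a union of two rational balls. It is the boundary quotient $W=V/\sigma$ by a free involution $\sigma$ of $S^3_0(Q)$, with $V$ a Luttinger-surgered symplectic homology $S^2\times D^2$. Building $V$ and verifying its properties is the substance of the theorem.

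The cohomology-ring clause, by contrast, needs no arranging. Every rational homology $4$-sphere with $H_1=\Z/2$ has integral cohomology $\Z,0,\Z/2,\Z/2,\Z$, with all products of positive-degree classes zero. Note that $H^3\cong H_1\neq0$, contrary to your list.
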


Here $B$ is the Kawauchi manifold~\cite{Kaw09} and $W=V/\sigma$, where $V$ is a
symplectic homology $S^2\times D^2$ built from a genus-$2$ surface bundle $R$
over a once-punctured torus by two Luttinger surgeries, and $\sigma$ is a free
involution of $\partial V=S^3_0(Q)$, $Q$ the square knot. This paper proves the
three theorems below.

\begin{mainthm}[Exotic $S^2\times S^2$]\label{thm:A}
The symplectic double $Z=V\cup_\sigma V$ is homeomorphic to $S^2\times S^2$ and
not diffeomorphic to it.
\end{mainthm}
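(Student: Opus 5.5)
The plan is to split Theorem~\ref{thm:A} into a topological half and a smooth half. The topological half rests on the main technical input of the paper, that $\pi_1(V)=1$ for the chosen Luttinger parametrizations; I assume it here.

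For the topological half, first compute the algebraic topology of $Z=V\cup_\sigma V$. Since $\partial V=S^3_0(Q)$ is connected and $\pi_1(V)=1$, Seifert--van Kampen gives $\pi_1(Z)=1$. Next use Mayer--Vietoris with $H_*(V)\cong H_*(S^2\times D^2)$ and $H_*(S^3_0(Q))\cong H_*(S^2\times S^1)$. The map $H_2(\partial V)\to H_2(V)$ is the key input: the capped Seifert surface generates $H_2(\partial V)\cong\Z$ and maps isomorphically onto $H_2(V)\cong\Z$, because $V$ is a homology $S^2\times D^2$ whose boundary is the $0$-surgery.

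From this, $H_2(Z)\cong\Z^2$ and $\chi(Z)=4$. The intersection form is even: each copy of $V$ contributes a square-zero class, namely the image of the boundary class, and these pair with the class glued from the two relative generators. Alternatively, I would argue that $Z$ is spin because $V$ is spin and $\sigma$ respects the unique spin structure on $S^3_0(Q)$, which has $H_1=\Z$. The form is then even, unimodular and of rank $2$. The signature vanishes by Novikov additivity, since $\sigma(V)=0$. Hence the form is $H$, and Freedman's classification gives $Z\homeo S^2\times S^2$.

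For the smooth half, use that $Z$ is a symplectic double. The involution $\sigma$ is orientation-preserving on $S^3_0(Q)$ but is used as an orientation-reversing gluing, so $V\cup_\sigma V$ is glued along a boundary map compatible with the symplectic structure. This makes $Z$ symplectic, either because $\partial V$ is a convex/concave pair matching under $\sigma$, or via the Lidman--Piccirillo description of $V$ as a symplectic piece with suitable boundary.

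Next I would show that $Z$ is not rational or ruled. The piece $V$ contains the Luttinger-surgered genus-$2$ surface bundle region, which has Kodaira dimension $\ge 0$ behaviour. Concretely, I would exhibit a symplectic surface of positive genus and self-intersection $0$ coming from a fiber, together with $b^+=1$ constraints. The cleanest route is to apply Taubes and the $b^+=1$ wall-crossing to get a nonzero Seiberg--Witten invariant in the chamber of the symplectic form, and to check that $K_Z\ne 0$ is not a negative multiple of the symplectic class. Equivalently, I would use Liu's theorem: a minimal symplectic manifold with $K\cdot[\omega]>0$ is not rational or ruled.

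Since $S^2\times S^2$ has positive scalar curvature, all of its SW invariants vanish in both chambers away from the wall. Any symplectic form on $S^2\times S^2$ is rational with $K\cdot\omega<0$. So a symplectic $Z$ with $K\cdot\omega\ge 0$ cannot be diffeomorphic to $S^2\times S^2$.

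I expect the main obstacle to be establishing $K_Z\cdot[\omega]>0$, or minimality plus non-rationality. Minimality and non-ruledness are the conditions that actually separate symplectic $Z$ from $S^2\times S^2$. My first attempt would compute $K$ on $V$ from the genus-$2$ bundle: Luttinger surgery preserves the canonical class on the complement of the tori. I would then evaluate it against fiber classes glued across the double, using adjunction on the genus-$2$ fibers (self-intersection $0$, so $K\cdot F=2$). If the gluing symplectic structure is problematic, I would instead invoke the Lidman--Piccirillo relation between $Z$ and $W$ to transfer the obstruction.
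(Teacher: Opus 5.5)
Your architecture matches the paper's: assume $\pi_1(V)=1$, deduce $\pi_1(Z)=1$, identify the form and apply Freedman, then use symplectic geometry for the smooth half. But the step that actually separates $S^2\times S^2$ from $\CP^2\#\bCP^2$, namely evenness of the intersection form, is not established.

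Your first argument only yields a primitive square-zero class $F$ and a class $G$ with $F\cdot G=1$. That says nothing about the parity of $G^2$, and $\langle 1\rangle\oplus\langle -1\rangle$ contains such a pair too. Your second argument rests on a false premise. Since $H^1(S^3_0(Q);\Z/2)\cong\Z/2$, the manifold $S^3_0(Q)$ carries two spin structures, not one. What must be shown is that $\sigma$ preserves the particular one that extends over $V$.

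Neither argument uses any property of $\sigma$. Both would therefore apply verbatim to the reglued manifold $V\cup_{f\circ\sigma}V$ of Theorem~\ref{thm:C}. That manifold is non-spin and homeomorphic to $\CP^2\#\bCP^2$ exactly because $f$ exchanges the two spin structures.

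The paper closes this gap by using that $W=V/\sigma$ is spin (Lidman--Piccirillo's Lemma~7, which Proposition~\ref{prop:rigidity} verifies for the surgered piece) and that $Z\to W$ is a double cover. Alternatively, Lemma~\ref{lem:notori} computes $\hat\Gamma^2=0$ for the closed section directly, from the flat normal framing at the fixed point $p$. Note also that $\sigma$ must reverse the orientation of $\partial V$, not preserve it. This is needed both for $W$ to be orientable and for both halves of the double to carry their symplectic orientation.

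For the smooth half, your plan is to show $K_Z\cdot[\omega]>0$ and use that every symplectic form on $S^2\times S^2$ has $K\cdot[\omega]<0$. This is a legitimate alternative to the paper's Kodaira-dimension route. The paper argues that the aspherical bundle $R\cup_\sigma R$ has $\kappa\neq-\infty$, Luttinger surgery preserves $\kappa$, $Z$ is minimal because it is spin, and $\kappa$ is a diffeomorphism invariant.

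The obstacle you flag closes in a few lines once the form is known to be $H$ with basis $F,G$:
\begin{itemize}
\item adjunction gives $K\cdot F=2$;
\item $K^2=2\chi(Z)+3\operatorname{sign}(Z)=8$ then forces $K=2F+2G$;
\item $\omega(F)>0$ together with $[\omega]^2>0$ forces $\omega(G)>0$, so $K\cdot[\omega]>0$.
\end{itemize}

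Your version avoids the Ho--Li invariance theorem, at the cost of needing $F$ to be a symplectic genus-$2$ surface in $Z$. That fact, and the symplecticity of $Z$ itself, come from realizing $Z$ as Luttinger surgery on the Thurston-symplectic bundle $R\cup_\sigma R$. They do not come from a convex/concave matching, which cannot occur for two copies of the same piece.

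Finally, drop the claim that the Seiberg--Witten invariants of $S^2\times S^2$ vanish in both chambers. Taubes gives a nonzero invariant in its symplectic chamber, and for $b_1=0$, $b^+=1$ the wall-crossing jump for the canonical spin$^c$ structure is $\pm1$. Your argument does not need that claim.
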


\begin{mainthm}[A homeomorphic pair distinguished by slicing]\label{thm:B}
$W$ is homeomorphic to $B$. Since the figure-eight knot is slice in $B$ and not
slice in $W$, the pair $(B,W)$ consists of homeomorphic, non-diffeomorphic
closed $4$-manifolds distinguished by unconstrained knot
slicing.\footnote{\emph{Unconstrained}: no constraint is placed on the
homology class of the slice disk.}
\end{mainthm}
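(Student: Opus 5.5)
The proof of Theorem~\ref{thm:B} reduces to classifying closed non-simply-connected $4$-manifolds with $\pi_1=\Z/2$, and the key input is the simple connectivity of $V$, which I take as the main computational result of the paper (for the specified Luttinger parametrizations). Given $\pi_1(V)=1$ and $\partial V=S^3_0(Q)$, the quotient $W=V/\sigma$ is obtained by gluing $V$ to itself along the free involution, i.e.\ $W=V\cup_{\partial V\to\partial V/\sigma}\,$(mapping cylinder), so that its double cover is $Z=V\cup_\sigma V$. By Seifert--van Kampen, $\pi_1(Z)=1$ and $\pi_1(W)=\Z/2$, with universal cover $Z$. Theorem~\ref{thm:LP} already gives $H_1(W)=\Z/2$, the spin condition, and an isomorphism of cohomology rings with $B$. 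The Kawauchi manifold $B$ likewise has $\pi_1=\Z/2$ (this is part of its construction: it is a quotient of a smooth $S^2\times S^2$ by a free involution), so both are closed, orientable, spin, rational homology spheres with $\pi_1=\Z/2$ and Euler characteristic $1$.

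I then invoke the Hambleton--Kreck classification of closed oriented $4$-manifolds with $\pi_1=\Z/2$. In the spin (or more precisely, $w_2$-type of the universal cover) case, such manifolds are determined up to homeomorphism by the $w_2$-type, the Euler characteristic, the signature, the Kirby--Siebenmann invariant, and, in the relevant type, a $\Z/2$-valued or $\Z/8$-valued refinement such as the Pin/$\eta$-type invariant (equivalently, the equivariant intersection form on $\pi_2$ together with $\KS$). For rational homology spheres the intersection form on $H_2$ is trivial and the signature is $0$. Both $B$ and $W$ are smooth, so $\KS=0$. It remains to check that the $w_2$-types agree: both universal covers ($Z$ and the $S^2\times S^2$ covering $B$) are spin, and the $w_2$-type of the quotient is read off from whether $w_2$ is zero on $W$, $B$ themselves (both spin, by Theorem~\ref{thm:LP}). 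I therefore need the remaining Hambleton--Kreck invariant, which I expect to be the quadratic $2$-type / the $\mathrm{Pin}^+$ bordism class of the characteristic $\RP^3$-type submanifold. I would match it using the isomorphism of cohomology rings from Theorem~\ref{thm:LP} together with the explicit linking form on $H_1$ and the spin structures, or by computing it from the Rokhlin/$\mu$-invariant of $\partial V$ modulo the known value for Kawauchi's manifold.

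The main obstacle is this last matching step: Hambleton--Kreck's statement for $\pi_1=\Z/2$, spin type, includes a $\pm$ ambiguity tied to the $\mathrm{Pin}$ bordism invariant, and the cohomology ring does not obviously detect it. I would first try to circumvent it by applying the classification to the universal covers together with the deck involution, i.e.\ showing both covers are $S^2\times S^2$ (for $Z$ via Freedman, since $Z$ is spin simply connected with form $H$, which is Theorem~\ref{thm:A}'s homeomorphism part), and that for rational homology spheres with $\pi_1=\Z/2$ and $\chi=1$ the stable class is forced. Failing that, I would compute the invariant directly from the characteristic submanifold $\partial V/\sigma$ in $W$. Finally, the non-diffeomorphism is immediate: smoothly, $4_1$ bounds a disk in $B$ but not in $W$ by Theorem~\ref{thm:LP}, and sliceness is a diffeomorphism invariant with no homology-class constraint imposed.
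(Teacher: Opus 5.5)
Your route is the paper's: $\pi_1(V)=1$ and van Kampen give $\pi_1(Z)=1$ and $\pi_1(W)=\Z/2$; you then compare $W$ with $B$ via Hambleton--Kreck, and take non-sliceness from Lidman--Piccirillo. As written, though, the proposal does not establish the homeomorphism. You attribute to Hambleton--Kreck an extra Pin-bordism refinement with a $\pm$ ambiguity, call matching it ``the main obstacle'', and leave it open. No such invariant occurs here.

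Hambleton--Kreck \cite[Theorem C]{HK93} classifies closed \emph{oriented} topological $4$-manifolds with finite cyclic fundamental group up to homeomorphism by exactly four invariants: $\pi_1$, the intersection form on $H_2/\mathrm{Tors}$, the $w_2$-type, and $\KS$. For $W$ and $B$ these are:
\begin{itemize}
\item $\pi_1=\Z/2$;
\item the zero form, since both are rational homology spheres;
\item type (II), since both are spin;
\item $\KS=0$, since both are smooth (alternatively $\KS\equiv\sigma/8$ for spin manifolds).
\end{itemize}
So $W\homeo B$ at once. This is precisely the paper's argument in Theorem~\ref{thm:reduction}(a). The cohomology-ring isomorphism of Theorem~\ref{thm:LP} is not needed, and neither are your fallback strategies (deck involutions on the covers, invariants of $\partial V/\sigma$).

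You can see directly why no refinement appears. The normal $1$-type of a spin manifold with $\pi_1=\Z/2$ is $B\mathrm{Spin}\times B\Z/2$. By the Smith isomorphism, $\Omega_4^{\mathrm{Spin}}(B\Z/2)\cong\Omega_4^{\mathrm{Spin}}\oplus\Omega_3^{\mathrm{Pin}^-}\cong\Z\oplus 0$. So the characteristic $3$-manifold carries no bordism information, and the stable class is fixed by the signature. Pin-bordism invariants of the kind you describe belong to the \emph{non-orientable} $\pi_1=\Z/2$ classification of Hambleton--Kreck--Teichner.

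Two smaller corrections. First, a rational homology $4$-sphere has $\chi=2$, not $1$; indeed $\chi(Z)=4=2\chi(W)$. Second, citing Theorem~\ref{thm:LP} for non-sliceness in $W$ is reasonable. Note, however, that Lidman--Piccirillo leave the surgery curves unspecified, so the paper re-derives the obstruction for the specified parametrization (Theorem~\ref{thm:reduction}(b), Step~3, via Lemma~\ref{lem:notori} and $\pi_1(Z)=1$).
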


\begin{mainthm}[Exotic $\CP^2\#\bCP^2$]\label{thm:C}
The regluing of \cite[Theorem 2]{LP25} applied to $Z$ is a closed simply
connected $4$-manifold homeomorphic to $\CP^2\#\bCP^2$ and not diffeomorphic to
it.
\end{mainthm}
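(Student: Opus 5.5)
The statement to prove is Theorem~\ref{thm:C}. Throughout I take as input the main technical result announced in the abstract: $\pi_1(V)=1$ for the chosen surgery parametrizations. I also use what Theorem~\ref{thm:A} gives, namely $Z\homeo S^2\times S^2$. The plan follows the template of \cite[Theorem 2]{LP25}.

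Write $Z=V\cup_\sigma V$. The regluing $Z'$ replaces $\sigma$ by $\sigma\circ\tau$, where $\tau$ is the self-diffeomorphism of $\partial V=S^3_0(Q)$ used in \cite[Theorem 2]{LP25}. This changes the intersection form from even to odd while preserving the smooth obstruction.

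\emph{Step 1: simple connectivity.} By Seifert--van Kampen, $\pi_1(Z')$ is a quotient of $\pi_1(V)*\pi_1(V)$. Since $\pi_1(V)=1$, this gives $\pi_1(Z')=1$, exactly as for $Z$.

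\emph{Step 2: homology and intersection form.} Mayer--Vietoris with $H_*(V)\cong H_*(S^2\times D^2)$ and $H_1(\partial V)=\Z$ gives $H_2(Z')\cong\Z^2$. The generators are the two core classes of the copies of $V$, each of self-intersection $0$ in $V$ since $\partial V$ is $0$-surgery. The regluing map $\tau$ shifts the framing on the meridian of $Q$ by an odd amount. The Mayer--Vietoris computation then produces a class whose square is odd, so the form is unimodular, indefinite, odd and of rank $2$. Hence it is $\langle1\rangle\oplus\langle-1\rangle$. Freedman's classification for simply connected closed manifolds then gives $Z'\homeo\CP^2\#\bCP^2$; the Kirby--Siebenmann invariant vanishes because $Z'$ is smooth.

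\emph{Step 3: exoticness.} This is the real content, and here I would mirror \cite{LP25}. Suppose $Z'$ were diffeomorphic to $\CP^2\#\bCP^2$. The regluing is designed so that the figure-eight-type slicing obstruction transfers, or equivalently so that $Z$ is recovered from $Z'$ by a reverse regluing along a fixed $S^3_0(Q)$ that sits in $\CP^2\#\bCP^2$ in a standard way. In $\CP^2\#\bCP^2$ the corresponding cut-and-reglue would produce the standard $S^2\times S^2$. That contradicts Theorem~\ref{thm:A}, whose proof rests on $Z$ being symplectic, so that Seiberg--Witten or Taubes nonvanishing for $b^+=1$ rules out diffeomorphism with $S^2\times S^2$, which admits a positive scalar curvature metric in the relevant chamber.

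An alternative is to give $Z'$ a nonvanishing Seiberg--Witten invariant directly. I expect the main obstacle to be this Step 3. One must show either that the reverse regluing inside the standard $\CP^2\#\bCP^2$ really yields the standard $S^2\times S^2$, a handle calculus statement about the embedded $S^3_0(Q)$, or that the wall-crossing bookkeeping for $b^+=1$ survives the regluing. The first thing I would try is to verify that \cite[Theorem 2]{LP25} is stated precisely as a conditional: ``if the double is exotic then the regluing is exotic.'' Then Theorem~\ref{thm:A} suffices, and only Steps 1--2 need new input.
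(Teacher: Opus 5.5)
Your Steps~1 and~2 match the paper's proof of Theorem~\ref{thm:reduction}(c). Van Kampen makes $\pi_1$ of the reglued manifold a quotient of $\pi_1(V)*\pi_1(V)=1$. The framing-parity change makes it non-spin with $b_2=2$ and signature $0$, so the form is $\langle1\rangle\oplus\langle-1\rangle$ and Freedman applies. One correction: the cores of the two copies of $V$ do not give two independent classes. Both are the fiber class $F$, which comes from $H_2(\partial V)\cong\Z$ and maps diagonally in Mayer--Vietoris. The second generator is a class mapping onto $H_1(\partial V)\cong\Z$ under the connecting map, and the regluing changes the parity of its square.

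Step~3, however, does not work. Suppose $g$ were a diffeomorphism from the reglued manifold to $\CP^2\#\bCP^2$. Undoing the regluing along $g(S^3_0(Q))$ then gives, by naturality, a manifold diffeomorphic to $Z$, and nothing more. The position of $g(S^3_0(Q))$ is dictated by the unknown $g$, so it need not sit ``in a standard way''. To conclude that this cut-and-reglue is the standard $S^2\times S^2$, you would need a uniqueness statement for splittings of $\CP^2\#\bCP^2$ along copies of $S^3_0(Q)$. No such statement is available, and it is no easier than the theorem itself. Exoticness of $Z$ does not transfer across a regluing. Nor is \cite[Theorem 2]{LP25} a conditional of the form ``double exotic implies regluing exotic''.

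The missing ingredient is a smooth invariant computed on the piece $V$ and then glued, which is the paper's route:
\begin{enumerate}
\item The relative Ozsv\'ath--Szab\'o invariants $\Psi_{V,\mathfrak{t}_\pm}$ are nonzero exactly as in \cite[Lemma 9]{LP25}. This holds because $Z$ is symplectic and its canonical class evaluates to $\pm2$ on a fiber disjoint from the Luttinger tori, a pairing that Luttinger surgery does not change, so \cite{OS04} applies.
\item \cite[Lemma 10]{LP25} then gives a nonzero mixed invariant of the reglued manifold for the line spanned by $F$.
\item By contrast, $\CP^2\#\bCP^2$, even summed with any homology sphere, splits along $S^2\times S^1$ for each square-zero line. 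Since $HF_{\mathrm{red}}(S^2\times S^1)=0$, every such mixed invariant vanishes.
\end{enumerate}
So the exoticness of $Z$ and of the reglued manifold both come from the same relative invariant of $V$; neither is derived from the other.

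Your ``alternative'' gestures at this, but the reglued manifold is not given a symplectic structure, so a direct Taubes-type nonvanishing is unavailable. The gluing through $V$ is essential. That argument is $\pi_1$-agnostic, which is why only Steps~1--2 need the new input $\pi_1(V)=1$.
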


An exotic $S^2\times S^2$, or an exotic $\CP^2\#\bCP^2$, is a
long-standing open problem; the only claim in the literature, by Akhmedov and
Park~\cite{AP10}, has remained an unpublished preprint since 2010 and is cited
as such by the current literature \cite{SS23,BSS24}, while \cite{LP25}
re-proves the strictly weaker ``nonstandard cohomology $S^2\times S^2$''
statement (their Theorem~8) and describes the known examples as ``presumably
not simply connected''. The smallest closed simply connected $4$-manifolds
with signature zero known to admit exotic smooth structures are the connected sums
$\#_{2m+1}(\CP^2\#\bCP^2)$ for $m\ge4$ and $\#_{2n+1}(S^2\times S^2)$ for
$n\ge5$ of Baykur--Hamada~\cite{BH23}, who obtain smaller topology only at
the price of a nontrivial fundamental group; Theorems~\ref{thm:A}
and~\ref{thm:C} concern the bottom case of each family.
Theorem~\ref{thm:B} is the upgrade of
Theorem~\ref{thm:LP} from isomorphic cohomology rings to homeomorphism, and is
the first pair of homeomorphic closed $4$-manifolds distinguished by
unconstrained slicing.

Lidman and Piccirillo explicitly leave the parametrizing curves on the two
surgery tori arbitrary, so their notation $V$ does not specify a unique
manifold before those choices are made.  Throughout this paper, $V$ means the
result of their two $+1$ Luttinger surgeries when the two base-direction
curves are the Lagrangian push offs developed in
Sections~\ref{subsec:directions}--\ref{subsec:pushoffbasing}; in the notation
of Proposition~\ref{prop:rigidity}, this is $V=V'_{0,0}$. For $T_\alpha$, the
reference section is explicitly the $y_1$-based half-drift section with word
$Ax$; the alternative $Ar^{-1}$ section is the adjacent $n=1$ member. The plus signs here
refer to the geometric surgery convention of \cite{LP25}; their exponents in
the based relator sheet depend on orientation choices and are included among
the signs of Convention~\ref{conv:collected}.  This defines the object
considered here; it is not a claim that the parametrization is canonical.

All three theorems come from a single group-theoretic fact.

\begin{mainthm}[Triviality of $\pi_1$]\label{thm:D}
For the specified Lidman--Piccirillo piece $V=V'_{0,0}$, one has
$\pi_1(V)=1$.
\end{mainthm}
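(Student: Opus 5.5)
The plan is the Baldridge--Kirk procedure for fundamental groups of Luttinger surgeries. First I will fix a presentation of $\pi_1(R)$, where $R$ is the genus-$2$ surface bundle over the once-punctured torus. Since the base $T_0$ has free fundamental group $\langle x,y\rangle$, the bundle splits and
\[
\pi_1(R)=\pi_1(\Sigma_2)\rtimes F(x,y),
\]
with fibre generators $a_1,b_1,a_2,b_2$ subject to $[a_1,b_1][a_2,b_2]=1$. The conjugation action of $x$ and $y$ is given by the monodromy words of the Lidman--Piccirillo bundle, written as explicit automorphisms of $\pi_1(\Sigma_2)$.

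Next, for each surgery torus $T_\alpha$, $T_\beta$ I will write down based words for three curves: the meridian $\mu$, the Lagrangian push off $m$ of the fibre-direction curve, and the push off $\ell$ of the base-direction curve. For $\ell$ on $T_\alpha$ I take the $y_1$-based half-drift section with word $Ax$ fixed in the introduction. Removing a neighbourhood of the tori gives the complement group, and I will present it by van Kampen. A convenient way is to cut $R$ along the fibres over the two base arcs and glue in the pieces meeting the tori; this yields the meridians as explicit commutators such as $\mu=[\ell^{\pm},m^{\pm}]$. The two $+1$ Luttinger surgeries then add the relations
\[
\mu_\alpha\,\ell_\alpha^{\pm1}=1,\qquad \mu_\beta\,\ell_\beta^{\pm1}=1,
\]
with signs as in Convention~\ref{conv:collected}. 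I must also keep the commutation relations $[\mu,m]=[\mu,\ell]=[m,\ell]=1$ on each boundary torus, expressed through the chosen basings.

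Finally I reduce the resulting finite presentation. By hand, the expected mechanism is that each surgery relation expresses one base or fibre generator as a commutator. Substituting these into the monodromy relations should kill successive generators, say $a_1$, then $b_1$, then $x$. What I hope is that the remaining relations, together with the surface relator, force $a_2,b_2,y$ to be trivial. Where the manipulation stalls I will instead run coset enumeration over the trivial subgroup, which certifies $|G|=1$, or Knuth--Bendix completion to reduce each generator to the empty word. To guard against basing errors, I will first run the same pipeline on two configurations with independently known answers, namely the unsurgered $R$ and a surgery choice whose homology or known quotient is computable. Agreement there validates the word conventions before the decisive run.

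The main obstacle is the second step: obtaining correctly based representatives of the push offs and meridians in the twisted bundle. The monodromy drags the tori, so a single basing-path error changes the relation by a conjugation and can turn a trivial group into a nontrivial one, or the reverse. I would attack this by tracking every curve as a path in an explicit cut-open model of $R$, checking abelianizations against $H_1(V)=0$, and checking $H_1$ of the complement against the Mayer--Vietoris computation before trusting the enumeration.
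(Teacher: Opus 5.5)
Your overall frame, a based Baldridge--Kirk presentation decided by coset enumeration or Knuth--Bendix, is the paper's. However, the geometric input that turns such a computation into a proof is missing, and what you do say about it is wrong.

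First, the meridian is not $[\ell^{\pm},m^{\pm}]$. The curves $\ell$ and $m$ are push offs of curves \emph{on} the torus, so they commute in $\pi_1(\partial\nu T)\cong\Z^3$, and you yourself impose $[m,\ell]=1$. Your formula therefore forces $\mu=1$ and turns the surgery relations into $\ell_\alpha=\ell_\beta=1$. That is not known to hold in $\pi_1(V)$; it would hold only if $\ell$ were already trivial there. So the resulting group has no epimorphism onto $\pi_1(V)$, and its triviality proves nothing. Freely, the meridian is a commutator of \emph{dual} data: a fiber curve meeting the torus's fiber curve once, and a lift of the \emph{other} base loop, $\mu_{T_\alpha}\simeq[d,\beta'']$ and $\mu_{T_\beta}\simeq[z,\alpha'']$.

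Second, and more seriously, you carry the semidirect-product relations of $\pi_1(R)$ unchanged into the complement, and your ``expected mechanism'' substitutes the surgery relations into them. In the complement, a relation $BgB^{-1}=\tilde\psi(g)$ survives only if the transport annulus of $g$ misses the surgery tori. In the paper's based diagram (base generators $A,B$, fiber generators $x,y,r,s$), three annuli do not: $s$ over $\bar\alpha$, and $y$ and $s$ over $\bar\beta$. They give $AsA^{-1}=N^{\pm}y$, $ByB^{-1}=M^{\pm}yx$ and $BsB^{-1}=\delta M^{\varepsilon}\delta^{-1}s$, with explicit conjugating path $\delta=r^{-1}$. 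Likewise, the square swept by the basing arc of the base-direction push off of $T_\beta$ crosses $T_\alpha$, so that push off is $(\delta M^{-\varepsilon}\delta^{-1})B$, not $B$. The paper shows that all $72$ uncorrected candidate systems collapse to the trivial group, which is exactly why such a collapse is worthless. It also shows that guessed correction words give trivial and not-visibly-trivial groups generically.

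Your safeguards cannot catch these errors. Meridian factors and conjugations abelianize to zero, so checking $H_1(V)=0$, or $H_1$ of the complement by Mayer--Vietoris, is blind to them. Calibrating on the unsurgered $R$ tests no meridian or push-off word at all. What the proof needs is:
\begin{enumerate}
\item[(a)] a derivation of every relation from explicit crossing counts of the transport annuli, and of the swept basing square, with the tori, with the conjugating paths recorded;
\item[(b)] the framing lemma, identifying the fibered push offs as constant-momentum lifts in Weinstein charts, so that the surgery coefficient is not shifted by a meridian multiple; this includes the half-drift section on $T_\alpha$, where $\varphi_0$ rotates $c$ by half a turn;
\item[(c)] the surjection principle: an exact presentation is unnecessary, since generators of $\pi_1(C)$ together with relations each \emph{proved} true define a group $G\twoheadrightarrow\pi_1(V)$, and enumeration showing $G=1$ then suffices.
\end{enumerate}
The paper also calibrates against configurations with known non-abelian answers. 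These are Baldridge--Kirk's $T^4$, which exposed an error in the $T_\beta$ push-off word, and a Seifert-fibered test of the drift. Finally, it adjoins the true drilled-fiber relation $B\kappa_3B^{-1}=\tilde\psi(\kappa_3)$, coming from the free basis $x,r,\kappa_3$ of $\pi_1(F\smallsetminus\nu c)$, to make the enumeration tractable.
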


For the fixed piece, the conclusion is unchanged under the convention choices
encoded by the five signs in the relator sheet, the left/right placement of
the three transport corrections, the two conjugating arcs for the $Bs$
correction, and the arc, sign, and placement choices in the $T_\beta$ push
off: coset enumeration terminates with order one in all $4{,}096$ cases.
The same $4{,}096$-case calculation is repeated for the adjacent
$T_\alpha$ section; all of those groups are also trivial. The first run is
the one used for the specified piece.
Knuth--Bendix computations on nearby formal exponent choices and a second
based diagram are retained in Appendix~\ref{app:record} as diagnostics; no
simple-connectivity claim for those additional parametrizations is needed or
made.

\subsection*{The two parts}

\textbf{Part~I} (Sections~\ref{sec:model}--\ref{sec:experiments}) proves that
Theorem~\ref{thm:D} implies Theorems~\ref{thm:A}--\ref{thm:C}, and that the
implication is in an appropriate sense an equivalence. Call a $4$-manifold $V'$
an \emph{admissible variant} of $V$ if it is obtained from $R$ by Luttinger
surgeries from the family specified in Proposition~\ref{prop:rigidity} below:
one surgery on each of $T_\alpha,T_\beta$, with the listed coefficients and
directions.  This is a definition of the family, not a classification of all
Luttinger surgeries that could produce the same homology.  The fixed
Lidman--Piccirillo choice is the case $(m,n)=(0,0)$. Set $W'=V'/\sigma$ and
$Z'=V'\cup_\sigma V'$.

\begin{theorem}[Reduction]\label{thm:reduction}
Let $V'$ be any admissible variant. Then:
\begin{enumerate}
\item[(a)] $W'\homeo B$ if and only if $\pi_1(Z')=1$.
\item[(b)] If $\pi_1(Z')=1$, then $Z'$ is homeomorphic to $S^2\times S^2$ and not
diffeomorphic to it, and the pair $(B,W')$ is homeomorphic and non-diffeomorphic,
distinguished by the sliceness of $4_1$.
\item[(c)] If $\pi_1(V')=1$ (which implies $\pi_1(Z')=1$), then in addition the
regluing of \cite[Theorem 2]{LP25} applied to $Z'$ yields a simply connected
$4$-manifold homeomorphic but not diffeomorphic to $\CP^2\#\bCP^2$.
\end{enumerate}
\end{theorem}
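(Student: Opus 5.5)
I will prove the three parts in order, using classification theorems for (a) and (b) and the Lidman--Piccirillo regluing for (c). The starting observation is that $Z'$ is the double cover of $W'$. Indeed, $\sigma$ is a free involution of $\partial V'=S^3_0(Q)$, so $Z'=V'\cup_\sigma V'$ carries a free involution exchanging the two copies, and its quotient is $V'\cup_\sigma/\!\sim\,=V'/\sigma=W'$. Hence $\pi_1(Z')$ is an index-two subgroup of $\pi_1(W')$.

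\textbf{Part (a).} The homological input should carry over from \cite{LP25} to every admissible variant. The surgery coefficients and directions in the family of Proposition~\ref{prop:rigidity} are exactly the ones that keep $H_*(V')\cong H_*(S^2\times D^2)$. Running the argument of \cite{LP25} verbatim then shows that $W'$ is a spin rational homology sphere with $H_1=\Z/2$ whose cohomology ring and $w_2$ behave like those of $B$; this is where I will invoke Proposition~\ref{prop:rigidity}. For the forward direction, $B$ has $\pi_1=\Z/2$ with universal cover $S^2\times S^2$ (Kawauchi). So if $W'\homeo B$, then $Z'$, being the double cover, is homeomorphic to the universal cover of $B$, and $\pi_1(Z')=1$. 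For the converse, $\pi_1(Z')=1$ together with the index-two inclusion gives $\pi_1(W')\cong\Z/2$. Both $B$ and $W'$ are then closed oriented topological $4$-manifolds with $\pi_1=\Z/2$ and matching $w_2$-type. I will apply the Hambleton--Kreck classification: such manifolds are determined up to homeomorphism by the intersection form on $H_2(\,\cdot\,;\Z)/\mathrm{tors}$ (here trivial), the $w_2$-type, the Kirby--Siebenmann invariant, and, in the relevant $w_2$-type, an additional $\Z/2$-valued quadratic or Arf-type invariant. The Kirby--Siebenmann invariants agree because both manifolds are smooth. The equivariant intersection form and the $w_2$-type of the universal cover will be matched using the spin double covers $S^2\times S^2$ and $Z'$. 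I expect the main obstacle to be exactly this point: pinning down the $w_2$-type and the extra invariant for $W'$, uniformly over the variants. My first attempt will be to read them off from the free involution on $Z'$ via the action on $H_2(Z')\cong H\oplus$\ldots, comparing it with Kawauchi's involution on $S^2\times S^2$, where $\sigma$ swaps the two $V'$ halves.

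\textbf{Part (b).} The manifold $Z'$ is closed and smooth, and is spin because it double covers a spin manifold. Mayer--Vietoris shows that $Z'$ has intersection form $H$. So if $\pi_1(Z')=1$, Freedman's theorem gives $Z'\homeo S^2\times S^2$. For non-diffeomorphism, $Z'$ is a symplectic double (Luttinger surgery preserves symplecticity, and the doubling of \cite{LP25} is symplectic) with $b^+=1$. I will quote the Seiberg--Witten or nonstandard-cohomology result of \cite[Theorem~8]{LP25}, whose proof applies verbatim to any admissible variant and obstructs a diffeomorphism to $S^2\times S^2$. Alternatively, since $4_1$ is slice in $B$ but not in $W'$ by \cite{LP25} applied to the variant, $W'$ is not diffeomorphic to $B$. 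Combined with (a), this gives the homeomorphic non-diffeomorphic pair.

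\textbf{Part (c).} If $\pi_1(V')=1$, then van Kampen gives $\pi_1(Z')=1$. The regluing of \cite[Theorem~2]{LP25} cuts $Z'$ along $S^3_0(Q)$ and reglues by a map changing the parity of the form. Since both pieces are simply connected, van Kampen shows the result $X$ is simply connected. A Mayer--Vietoris computation gives an odd indefinite unimodular form of rank $2$, so Freedman's theorem identifies $X$ with $\CP^2\#\bCP^2$ up to homeomorphism; here $\KS(X)=0$ because $X$ is smooth. Exoticness again follows from the symplectic/Seiberg--Witten obstruction of \cite[Theorem~2]{LP25}. The one point to check is that its hypotheses are only homological and symplectic, so that they apply to $V'$.
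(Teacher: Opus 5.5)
Your outline follows the paper's proof in all three parts. For (a) this is the index-two inclusion $\pi_1(Z')\hookrightarrow\pi_1(W')$ plus Hambleton--Kreck. For (b) it is Freedman, the argument of \cite[Theorem~8]{LP25}, and the persisting slice obstruction supplied by Proposition~\ref{prop:rigidity}. For (c) it is van Kampen, Freedman with $\KS=0$, and the obstruction of \cite[Theorem~2]{LP25}.

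The one real gap is that the converse in (a) is left unfinished. The obstacle you flag there does not exist. Hambleton--Kreck's Theorem~C for closed oriented $4$-manifolds with finite cyclic fundamental group has no additional quadratic or Arf-type invariant. The fundamental group, the intersection form on $H_2/\mathrm{Tors}$, the $w_2$-type and the Kirby--Siebenmann invariant form a complete set.

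You already record that $W'$ is spin (Proposition~\ref{prop:rigidity} with \cite[Lemma~7]{LP25}), and so is $B$. Hence:
\begin{itemize}
\item both have $w_2$-type (II), so nothing needs to be ``pinned down'' uniformly over the variants;
\item the forms on $H_2/\mathrm{Tors}$ vanish, since both are rational homology spheres;
\item $\KS$ vanishes by smoothness.
\end{itemize}
So $W'\homeo B$ follows immediately. The planned comparison of the involution on $Z'$ with an involution on $S^2\times S^2$, or of equivariant forms, should be dropped. Likewise the forward direction needs only $\pi_1(B)=\Z/2$ and the index-two inclusion, not any identification of the universal cover of $B$.

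Two smaller points. In (b), the obstruction behind \cite[Theorem~8]{LP25} is Kodaira dimension:
\begin{itemize}
\item $R\cup_\sigma R$ is an aspherical surface bundle, so $\kappa\neq-\infty$;
\item Luttinger surgery preserves $\kappa$;
\item $Z'$ is minimal because it is spin;
\item $\kappa(S^2\times S^2)=-\infty$.
\end{itemize}
This is why the argument is $\pi_1$-agnostic and transfers to every admissible variant. A bare appeal to Seiberg--Witten invariants at $b^+=1$ would need chamber bookkeeping you have not supplied.

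In (c), the hypotheses you say must be checked are exactly \cite[Lemmas~9 and~10]{LP25}. The relative invariants are nonzero because $Z'$ is symplectic and its canonical class pairs to $\pm2$ with a fiber disjoint from the surgery tori, a pairing Luttinger surgery does not change. Lemma~10 then gives a nonzero mixed invariant. Such invariants vanish on $\CP^2\#\bCP^2$ because it splits along $S^2\times S^1$.
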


Theorem~\ref{thm:reduction} runs in both directions: the slicing upgrade cannot
be obtained without solving the exotic $S^2\times S^2$ problem inside a
$\sigma$-symmetric double, and conversely any solution of the simple-connectivity
problem in the Lidman--Piccirillo setting yields three theorems
simultaneously
(strictly more than the Akhmedov--Park setting, which yields the exotic
$S^2\times S^2$ alone), with no additional work, since surgeries in $\Int V$
descend to the quotient automatically. This, we suggest, is the difficulty
at which \cite{LP25} stops.

The proof of Theorem~\ref{thm:reduction}(a) rests on the topological
classification of $4$-manifolds with finite cyclic fundamental group: by
Freedman and Hambleton--Kreck \cite{Fre82,HK88,HK93} (see
\cite[Theorem 5.1]{Ham08} for the statement in this form, and \cite{BSS24} for
its $b_2=0$ case), a closed oriented topological $4$-manifold with
$\pi_1=\Z_n$ is determined up to homeomorphism by its intersection form,
$w_2$-type, and Kirby--Siebenmann invariant; in particular the smooth spin
rational homology $4$-sphere with $\pi_1=\Z/2$ is unique up to homeomorphism.
The input on the smooth side is a rigidity statement for the surgery
parameters:

\begin{proposition}[Rigidity of the surgery parameters]\label{prop:rigidity}
If one Luttinger surgery is performed on each of
$T_\alpha,T_\beta\subset R$ and the result is a homology
$S^2\times D^2$, then both surgeries have coefficient $\pm1$, and their directions equal
$\alpha',\beta'$ modulo fiber classes (throughout, $\alpha',\beta'$ denote
curves in $R$ projecting to the base curves $\alpha,\beta$; their classes
generate $H_1(R)$, Lemma~\ref{lem:complement}). Conversely, for every $m,n\in\Z$ the directions
$\beta'e^m,\alpha'c^n$ with coefficients $\pm1$ yield
$H_1(V')=0$ and preserve: the spin condition, symplecticity, $\chi=2$,
$H_2(V')=\Z\langle F\rangle$, the descent of $\sigma$, and the hypotheses of
the square-zero genus bound of Lemma~\ref{lem:notori} for the double. In
particular \cite[Lemma 7]{LP25} holds
verbatim for $W'$, and the slicing obstruction for $4_1$ in $W'$ persists.
\end{proposition}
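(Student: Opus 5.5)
The plan is to work entirely at the level of first homology of the complement $R_0=R\setminus\nu(T_\alpha\cup T_\beta)$, using Lemma~\ref{lem:complement}. That lemma gives $H_1(R)$ generated by the classes of $\alpha',\beta'$ (the fiber classes die or are absorbed by monodromy), and the meridians $\mu_\alpha,\mu_\beta$ of the two tori are null-homologous in $R$. From these facts I will write $H_1(R_0)$ explicitly. Each meridian is either trivial or a free generator, depending on whether the dual Lagrangian class of $T_\alpha$ (resp.\ $T_\beta$) is nonzero in $H_1(R)$. Since $T_\alpha$ contains a fiber curve together with a lift of $\alpha$, the dual class is represented by $\beta'$, and vice versa. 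Then $H_1(R_0)\cong \Z\langle \alpha',\beta',\mu_\alpha,\mu_\beta\rangle$ modulo fiber relations. A Luttinger surgery with direction $\gamma$ and coefficient $k$ adds exactly one relation, $\mu=\gamma^{k}$ (up to sign conventions), in homology.

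\textbf{Forward direction.} Suppose the result $V'$ is a homology $S^2\times D^2$, so $H_1(V')=0$. The two added relations are $\mu_\alpha=\gamma_\alpha^{k_\alpha}$ and $\mu_\beta=\gamma_\beta^{k_\beta}$, where $\gamma_\alpha,\gamma_\beta$ lie in the lattice spanned by the Lagrangian classes of each torus. Write $\gamma_\alpha=a\alpha'+(\text{fiber})$, and similarly for $\gamma_\beta$. Killing all four generators then requires the $4\times4$ relation matrix, combined with the relations already present in $R_0$, to be unimodular. I expect this to reduce to a $2\times2$ determinant in $(k_\alpha a,k_\beta b)$ once the fiber directions are reduced modulo the existing relations. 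From that determinant one reads off $|k|=1$, and that the direction must be $\alpha'$ (resp.\ $\beta'$) plus fiber classes; a fiber direction alone would leave $\mu$ or $\alpha'$ free. I also need to check that the attribution ``$\beta'e^m$ is the direction on one torus and $\alpha'c^n$ on the other'' matches the determinant calculation.

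\textbf{Converse direction.} For directions $\beta'e^m$ and $\alpha'c^n$ with coefficient $\pm1$, the same matrix is unimodular. This holds because $e,c$ are fiber classes that are already trivial (or expressible) in $H_1(R_0)$, so $m,n$ drop out and $H_1(V')=0$. For the remaining properties:
\begin{itemize}
\item Symplecticity holds because every Luttinger surgery on a Lagrangian torus with $\pm1$ along a Lagrangian-framing curve is symplectic.
\item $\chi=2$ holds since surgery on tori preserves $\chi$.
\item $H_2(V')=\Z\langle F\rangle$ follows from $H_1=0$ plus $\chi$ and the boundary computation, with the fiber $F$ disjoint from the surgeries surviving.
\item The spin condition follows because the surgery is $\pm1$ on curves whose framing is even/compatible with the spin structure extending over the complement; I would verify this by the usual $\mathbb Z/2$ framing-change count.
\item $\sigma$ descends, since the surgeries are in $\Int V$ and $\sigma$ acts only on $\partial V$.
\end{itemize}
The hypotheses of Lemma~\ref{lem:notori} depend only on symplecticity, spin, and $F$ being a square-zero genus-$2$ symplectic class. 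Hence \cite[Lemma 7]{LP25} goes through verbatim.

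\textbf{Main obstacle.} The main obstacle is the forward rigidity computation. It requires an honest presentation of $H_1(R_0)$, including how the monodromy acts on fiber classes, so that fiber contributions in the direction curves are correctly shown irrelevant while base components are forced to be $\pm1$. I would first compute the monodromy coinvariants explicitly from the bundle description and then do the determinant.
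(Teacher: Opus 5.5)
\textbf{The gap: the meridians in $H_1(R_0)$.} Your forward direction, and the converse computation, depend on the value of $H_1(R_0)$, and your model of it is wrong at the decisive point. You allow the meridians to be free generators and propose to kill $\alpha',\beta',\mu_\alpha,\mu_\beta$ with a unimodular $4\times4$ matrix. But each surgery adds only one relation. If $\mu_\alpha,\mu_\beta$ were free in $H_1(R_0)$ (with fiber classes dead), the relations $\mu_\alpha=\pm k_\alpha\gamma_\alpha$ and $\mu_\beta=\pm k_\beta\gamma_\beta$ would just eliminate the meridians. That leaves $H_1(V')\cong\Z^2$ for every coefficient and direction, so no choice would give a homology $S^2\times D^2$.

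Your criterion for when a meridian dies (a ``dual Lagrangian class'' being nonzero in $H_1(R)$) is also not the relevant one. What governs the meridian in $H_1$ of the complement is the existence of classes in $H_2(R)$ dual to $[T_\alpha],[T_\beta]$. The $2\times2$ answer you expect is correct, but it only follows once you know the meridians vanish. That is exactly the content of Lemma~\ref{lem:complement}, of which you quote only the weaker part (that the meridians die in $R$). The meridians $\mu_{T_\beta}\simeq[z,\alpha'']$ and $\mu_{T_\alpha}\simeq[d,\beta'']$ bound the punctured geometrically dual tori, which lie in the complement, so they are commutators in $\pi_1(R_0)$. Combined with $H_1(R,R_0)=0$ and $H_2(R,R_0)\cong\Z^2$ (generated by the normal disks), this gives $H_1(R_0)\cong H_1(R)=\Z^2\langle\alpha',\beta'\rangle$ with every fiber class zero. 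Then no determinant is needed. A filling of $T_\beta$ along $p\beta'+qe$ with coefficient $k$ imposes $kp\,\beta'=0$, and the filling of $T_\alpha$ imposes $k'p'\,\alpha'=0$. Hence $H_1(V')=\Z/|kp|\oplus\Z/|k'p'|$, which gives both the forcing and the realization for all $m,n$ at once.

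\textbf{Secondary points.} Your list of hypotheses for Lemma~\ref{lem:notori} is incomplete. That lemma uses the hyperbolic pair $F,\hat\Gamma$ in the double, with \emph{both} surfaces symplectic, to get $g(kF)=g(k\hat\Gamma)=|k|+1$. You therefore also have to check two things. First, the surgery tori miss the two fixed-point sections, which holds because $c$ and $e$ can be isotoped off the fixed points of $\varphi$. Second, the Thurston form, which Luttinger surgery leaves unchanged away from the tori, can be chosen positive on $\hat\Gamma$ as well as on $F$.

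Your framing-count route to spin can be completed once $H_1(R_0)$ is known. The two direction curves are independent in $H_1(R_0;\Z/2)$, so a spin structure on $R_0\subset V$ can be twisted to extend over both fillings. The paper's route is shorter: $H_1(V')=0$ makes $H^2(V';\Z/2)$ detected by $F$, and $\langle w_2,F\rangle=F\cdot F=0$.
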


Part~I also makes the obstruction quantitative, and shows that
Theorem~\ref{thm:D} is not a routine computation. In
Section~\ref{sec:model} we build a fully explicit finite presentation of
$\pi_1(R)$ with no genus-$2$ mapping class computations: since $Q$ is the
square knot, the closed genus-$2$ fiber splits along the connect-sum circle and
the entire bundle structure is generated by the trefoil monodromy
\[
h\colon x\mapsto y^{-1},\qquad y\mapsto yx
\]
on $F_2=\langle x,y\rangle$, together with the handle swap. With that model,
Section~\ref{sec:experiments} shows that all $72$ \emph{uncorrected} candidate
relation systems for $\pi_1(V')$ collapse to the trivial group, and that this
collapse is over-determined, carried entirely by four monodromy relations
broken by the surgery tori, so that the value of $\pi_1(V')$ rests on the
based-loop correction words alone. A direct sensitivity experiment
sharpens this:
across $120$ sampled candidate correction words, $69$ present the trivial
group, $44$ a group not visibly trivial, and the remaining $7$ are detectably
wrong ($H_1\neq0$). \emph{Both nondegenerate outcomes are generic}, so a
trivial outcome obtained from guessed words is worthless. The same diagnosis
applies to the presentation printed in \cite[Lemma~8]{AP10}: five parameter
values give the groups predicted by their Theorem~9, while this finite check
does not prove the all-parameter assertion (Section~\ref{subsec:E3}).  It does
show in those instances why the geometric derivation of the based words,
rather than the subsequent coset enumeration, is the step requiring scrutiny.

\smallskip
\textbf{Part~II} (Sections~\ref{sec:basedmodel}--\ref{sec:consequences})
computes the group from explicit based data. Our model follows the
approach of Baldridge and Kirk \cite{BK08,BK09}, who write of their own
Luttinger-surgery computations that ``the introduction of unwanted conjugation
at any stage can easily lead to a loss of control over fundamental groups, in
particular leading to plausible but unverifiable calculations''
\cite[Section~1]{BK09}. Concretely, we work in a single explicit model (the
fiber is a regular octagon with its edges identified, the base a cut square),
chosen so that every monodromy lift is a genuinely based automorphism
(the basepoint is fixed by both monodromies, so no basing correction is ever
implicit); the based meridians and Lagrangian push offs of the two surgery
tori are computed as explicit words in the fiber and base generators
(Section~\ref{sec:words}); an additional clean relation is obtained from an
explicit basis of a drilled fiber, and the resulting true-relation system is
proved to surject onto the fundamental group (Section~\ref{sec:surjection});
and the relation systems are decided by coset enumeration and, where
enumeration does not terminate, by Knuth--Bendix completion
(Section~\ref{sec:results}). Theorem~\ref{thm:D} is proved in
Section~\ref{sec:results}, and Theorems~\ref{thm:A}--\ref{thm:C} are deduced
from it and Theorem~\ref{thm:reduction} in Section~\ref{sec:consequences}.

The appendices record three independent checks on the computation.
Lemma~\ref{lem:markednormalform} and Figure~\ref{fig:octagon} fix the geometric
curves. Given those geometric inputs, every input word (each meridian, each
push off, and each corrected monodromy relation) is independently developed
from its departure, crossing, and arrival data by a small algorithm whose
validation is described in Appendix~\ref{app:machine}. The algorithm checks
the reading of a specified curve; it does not identify the source curve.
Appendix~\ref{app:machine} also records the convention checks for the fixed
piece and the separate diagnostic grids, including a second based diagram;
the pre-$R_3$ overflows are not used in the proof. Finally, the method is
compared with two configurations having independently known
answers: the $T^4$ double-Luttinger configuration of Baldridge--Kirk
\cite{BK09}, whose complement presentation they computed with explicit care
about basings, and a Seifert-fibered configuration whose surgeries are torus
twists with classified fundamental groups
(Section~\ref{subsec:calibrations}; Appendix~\ref{app:calibrations}).

\subsection*{Ancillary files}
The development outputs, run counts, and finite-group decisions can be
reproduced from the ancillary files accompanying the arXiv submission (also
available at
\url{https://github.com/bwuebben/exotic-s2xs2}); total runtime is minutes on
a laptop (GAP~4.16~\cite{GAP}, Python~3), except one optional
finite-quotient search ($\approx56$ CPU-hours). An additional
\texttt{walkthrough.pdf} is available in the GitHub repository's
\texttt{papers/} directory; it is not part of the arXiv ancillary archive.
Appendix~\ref{app:machine} inventories the files and separates the geometric
arguments proved in the body from the computations reproduced by the package.

\subsection*{Acknowledgements}
The author used LLMs during the development of this work for literature exploration, editorial assistance and as an adversarial reader.

\section{Luttinger surgery, basings, and conventions}\label{sec:luttinger}

\subsection{Luttinger surgery and the fundamental group}\label{subsec:luttinger}
We use Luttinger surgery in the form of \cite{Lut95,ADK03}, with the
$\pi_1$-bookkeeping of \cite[Section~2]{BK09}, which we recall. For a
Lagrangian torus $T$ in a symplectic $4$-manifold $M$, the Darboux--Weinstein
theorem provides a parameterization $T^2\times D^2\to\nu(T)\subset M$ under
which $T^2\times\{d\}$ is Lagrangian for every $d\in D^2$; choosing $d\neq0$
gives the \emph{Lagrangian push off} $F_d\colon T\to T^2\times\{d\}\subset
M-T$, and the isotopy class in $\nu(T)-T$ of the push off $F_d(\gamma)$ of an
embedded curve $\gamma\subset T$, its \emph{Lagrangian framing}, depends
only on the symplectic structure. A curve isotopic to $\{t\}\times\partial
D^2$ is a \emph{meridian} of $T$, denoted $\mu$. The \emph{$1/k$ Luttinger
surgery on $T$ along $\gamma$} removes $\nu(T)$ and reglues it so that the
curve $\mu F_d(\gamma)^k$ bounds a disk; the result is again symplectic, and
its fundamental group is
\[
\pi_1(M-T)\big/\big\langle\!\big\langle\mu\,F_d(\gamma)^{k}\big\rangle\!\big\rangle,
\]
where $\langle\!\langle\cdot\rangle\!\rangle$ denotes the normal closure. Following
\cite[Section~2]{BK09}, when the basepoint of $M$ lies off
$\partial\nu(T)$ the based loops $\mu$ and $F_d(\gamma)$ are to be joined to
the basepoint \emph{by the same path} in $M-T$, and the displayed formula
holds with respect to that choice of basing. All the delicacy of this paper
lives in that sentence: the surgered group depends on the based words of
$\mu$ and $F_d(\gamma)$, not on their free homotopy classes, and
Section~\ref{sec:experiments} quantifies how badly free-homotopy reasoning
fails here. In the constructions of \cite{BH23} this delicacy is engineered
away: the meridians of the surgered tori are recorded only as conjugates of
commutators, the exact expressions being unnecessary
(\cite[Remark~3]{BH23}), because each meridian is used only in a quotient
where its commutator core already dies, so the conjugating words never
enter. No such collapse is available for $V$: the presentations of Part~II
must be decided whole, and every basing arc and conjugating path is
recorded for that reason.

Both of our surgeries have $k=\pm1$; which sign corresponds to which
geometric convention is one of the five signs of Convention~\ref{conv:collected}(C7),
and no conclusion depends on it.

\subsection{Conventions, collected}\label{subsec:conventions}

Every convention used in this paper is collected here; each is restated in
context where it is first used. Two of them point in opposite directions and
are the easiest place to lose one's footing, so they are put side by side.

\begin{convention}[Collected]\label{conv:collected}
\begin{enumerate}
\item[(C1)] \emph{Paths and loops compose left to right}: $\gamma_1\gamma_2$
traverses $\gamma_1$ first. Relators are words equal to $1$; a relation written
$ugu^{-1}=w$ means the relator $ugu^{-1}w^{-1}$.
\item[(C2)] \emph{Automorphisms act on the left and compose right to left}:
$fg$ means ``$g$ first''. In particular
$h:=T_a\circ T_b$ applies $T_b$ first. \emph{(C1) and (C2) are deliberately
opposite}: the first is a statement about traversal, the second about function
composition.
\item[(C3)] \emph{Base generators}: $A:=[\bar\alpha]^{-1}$ and
$B:=[\bar\beta]^{-1}$, so that conjugation by $B$ implements one \emph{upward}
transport around $\bar\beta$, and likewise $A$ for $\bar\alpha$
(Convention~\ref{conv:words}).
\item[(C4)] \emph{Cut square}: base $=[0,1]^2$ minus a puncture disk at
$(3/4,3/4)$, basepoint $q=(1/4,1/4)$; crossing the $\alpha$-cut \emph{upward}
applies $\psi_0$, crossing the $\beta$-cut \emph{rightward} applies $\varphi_0$
(Section~\ref{subsec:cutsquare}). The surgery tori sit on the cuts:
$T_\alpha=c\times\{\alpha\text{-cut}\}$, $T_\beta=e\times\{\beta\text{-cut}\}$.
\item[(C5)] \emph{Twist normalization}: $T_a\colon(x,y)\mapsto(x,yx)$,
$T_b\colon(x,y)\mapsto(xy^{-1},y)$; the residual chirality ambiguity is absorbed
by the handle swap (Remark~\ref{rem:chirality}).
\item[(C6)] \emph{Basing arcs and conjugating paths}: every meridian and push
off is joined to the basepoint by a stated basing arc, per
Section~\ref{subsec:luttinger}; when a monodromy relation acquires a meridian
correction, the correction enters conjugated by an explicit path (the
conjugating path of Section~\ref{subsec:corrections}).
\item[(C7)] \emph{The five signs}. $\varepsilon_3$ is the meridian sign of
the corrected $As$ relation, $\varepsilon_4$ that of $By$, and $\varepsilon$
that of $Bs$ (the bare letter is reserved throughout for this one sign,
which reappears anti-coupled in Section~\ref{subsec:pushoffbasing}); $\varepsilon_A,\varepsilon_B\in\{\pm1\}$
are the exponents of the two surgery relations. These are the $\pm$ signs
written unnamed in Sections~\ref{subsec:corrections}
and~\ref{sec:presentation}; every conclusion below holds for all $32$
assignments (Remark~\ref{rem:robust}).
\item[(C8)] \emph{Dictionary with \cite{LP25}}: $x\sim a$, $y\sim b$, $r\sim e$,
$s\sim d$, and $c\sim xr$.  With compatible orientations, the auxiliary
source curve has $[z]=[y]-[s]$; it is not used in the Part~II relation system
(Sections~\ref{subsec:bundle} and~\ref{subsec:invcurves}).
\end{enumerate}
\end{convention}

One fixed convention set is used throughout the body of the paper.  The
finite check includes every sign and arc choice that can alter the displayed
relator sheet, as well as deliberate left/right perturbations.  Reversing the
composition convention or the chirality assignment instead gives the
generator changes described in Remarks~\ref{rem:conventions}
and~\ref{rem:chirality}.

\partheading{I}{The reduction}

\section{The explicit model of $\pi_1(R)$}\label{sec:model}

We recall the construction of \cite[Section 1]{LP25}. The $0$-surgery $S^3_0(Q)$ on
the square knot $Q=3_1\#\overline{3_1}$ fibers over $S^1$ with closed genus-$2$ fiber
$F$ and monodromy conjugate to $abd^{-1}e^{-1}$, a positive Dehn twist along each of
the chain curves $a,b,c,d,e$ of \cite[Figure 1]{LP25} being denoted by the same
letter. Let $\varphi$ be the involution of $F$ exchanging $a\leftrightarrow e$,
$b\leftrightarrow d$ and preserving $c$ setwise. Since
$abd^{-1}e^{-1}=(ab)\varphi(ab)^{-1}\varphi^{-1}$ in the mapping class group, the
$F$-bundle $R$ over the once-punctured torus with monodromy $ab$ along $\beta$ and
$\varphi$ along $\alpha$ has $\partial R=S^3_0(Q)$.

\begin{convention}\label{conv:twists}
$[u,v]=uvu^{-1}v^{-1}$. Automorphisms act on the left and compose right-to-left:
$fg$ means ``$g$ first''. Mapping classes act on $\pi_1$ only up to inner
automorphism; every presentation below depends on a choice of automorphism lifts, see
Remark~\ref{rem:lifts}.
\end{convention}

\subsection{The trefoil monodromy}\label{subsec:trefoil}

Let $\Sigma_{1,1}$ be a once-punctured torus with basepoint on the boundary,
$\pi_1(\Sigma_{1,1})=F_2=\langle x,y\rangle$, boundary word $[x,y]$, and let the core
curves realizing $x,y$ be $a\simeq x$ and $b\simeq y$, so $a\cdot b=1$. For a suitable
orientation convention the twists act by
\[
T_a\colon (x,y)\mapsto (x,\,yx),\qquad T_b\colon (x,y)\mapsto (xy^{-1},\,y).
\]
Define $h:=T_a\circ T_b$ ($T_b$ first).

\begin{lemma}\label{lem:trefoil}
$h(x)=y^{-1}$, $h(y)=yx$, and:
\begin{enumerate}
\item $h([x,y])=[x,y]$ \emph{exactly} (not merely up to conjugacy);
\item $h^6=\operatorname{conj}_{[x,y]^{-1}}$, the full boundary Dehn twist;
\item $h^3$ acts on $H_1$ as $-\mathrm{id}$;
\item on $H_1(\Sigma_{1,1})=\Z^2$, $h$ acts by
$\left(\begin{smallmatrix}0&1\\-1&1\end{smallmatrix}\right)$: determinant $1$, trace
$1$, order $6$.
\end{enumerate}
Consequently $h$ is the fibered monodromy of a trefoil: it is a product of single
positive twists along dual non-separating curves, fixes the boundary pointwise
(reflected in (1)), and is freely periodic of period $6$ with $h^6$ the boundary
twist (2), with the hyperelliptic involution at $h^3$ (3).
\end{lemma}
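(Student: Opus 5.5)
The plan is to verify everything by direct substitution in the free group $F_2=\langle x,y\rangle$. The one bookkeeping subtlety is Convention~\ref{conv:twists}: $h=T_a\circ T_b$ means ``$T_b$ first'', and both twists are automorphisms, so $h(uv)=h(u)h(v)$ and $h(u^{-1})=h(u)^{-1}$.

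\emph{Formulas for $h$ and item (1).} First,
\[
h(x)=T_a(xy^{-1})=x\,(yx)^{-1}=y^{-1},\qquad h(y)=T_a(y)=yx .
\]
Hence $h(x^{-1})=y$ and $h(y^{-1})=x^{-1}y^{-1}$. Substituting into $[x,y]=xyx^{-1}y^{-1}$ gives
\[
h([x,y])=y^{-1}\cdot yx\cdot y\cdot x^{-1}y^{-1}=xyx^{-1}y^{-1}.
\]
This is an identity of reduced words, not merely of conjugacy classes, which is item (1).

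\emph{Item (2).} I would iterate $h$ on the generators:
\[
h^2(x)=x^{-1}y^{-1},\qquad h^2(y)=yxy^{-1},
\]
\[
h^3(x)=y\,x^{-1}y^{-1},\qquad h^3(y)=(yx)\,y^{-1}\,(yx)^{-1}.
\]
Then I would compute $h^6=h^3\circ h^3$ by substituting $h^3(x),h^3(y)$ into the words $h^3(x),h^3(y)$ and freely reducing. I expect to obtain $[x,y]^{-1}\,x\,[x,y]$ and $[x,y]^{-1}\,y\,[x,y]$, which is $\operatorname{conj}_{[x,y]^{-1}}$ under $\operatorname{conj}_u(g)=ugu^{-1}$.

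The step I expect to be the main obstacle is this free reduction together with the direction of the conjugation. The sign of the exponent on $[x,y]$ depends on the conventions (C1)/(C2), so it must be checked carefully rather than assumed. A structural shortcut helps fix it. By (1), $h^6$ fixes $[x,y]$. Its $H_1$-action is trivial by (4), computed independently below. The periodicity of the trefoil monodromy then forces $h^6$ to be a power of the boundary twist. So it suffices to check a single generator, say $h^6(x)$, to pin down both the exponent and the sign.

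\emph{Items (3) and (4).} Abelianizing, $x\mapsto -y$ and $y\mapsto x+y$. In the basis $(x,y)$ this is the matrix $\left(\begin{smallmatrix}0&1\\-1&1\end{smallmatrix}\right)$ (with the paper's row/column convention), which has determinant $1$ and trace $1$. Its characteristic polynomial is $t^2-t+1$, whose roots are primitive sixth roots of unity. Hence the matrix has order exactly $6$, and its cube is $-\mathrm{id}$ (also visible directly from $h^3(x)\equiv -x$ and $h^3(y)\equiv -y$ above).

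\emph{The ``consequently'' clause.} This is interpretive and follows from the items already proved. By definition $h$ is a product of single positive twists along the dual curves $a,b$ (with $a\cdot b=1$). It is realized by an automorphism fixing the boundary word, which is (1). Modulo the boundary twist it is periodic of order $6$, which is (2) and (4). Its cube acts as $-\mathrm{id}$, the hyperelliptic involution, which is (3). This is exactly the standard description of the trefoil's fibered monodromy on $\Sigma_{1,1}$.
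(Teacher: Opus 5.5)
Your proposal is correct and follows the paper's own argument: the paper proves the lemma by direct substitution in $F_2$ and writes out only item (1), exactly as you do, and carrying out your $h^3\circ h^3$ substitution does give $h^6(x)=[x,y]^{-1}x[x,y]$ and $h^6(y)=[x,y]^{-1}y[x,y]$, which confirms your expected sign. The one soft spot is the optional shortcut for (2): you justify it by ``the periodicity of the trefoil monodromy,'' which is essentially the conclusion being proved, whereas the non-circular justification is Nielsen's theorem that an automorphism of $F_2$ acting trivially on $H_1$ is inner, together with the fact that the centralizer of $[x,y]$ in $F_2$ is $\langle [x,y]\rangle$; since the direct computation settles (2) anyway, nothing in your proof depends on the shortcut.
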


\begin{proof}
Direct computations in $F_2$, easily reproduced by hand (and checked by
machine; Appendix~\ref{app:machine}). For (1):
\[
h([x,y])=[y^{-1},yx]=y^{-1}\cdot yx\cdot y\cdot x^{-1}y^{-1}=xyx^{-1}y^{-1}=[x,y].
\]
\end{proof}

\begin{remark}[Chirality]\label{rem:chirality}
Whether $h$ or $h^{-1}$ ($h^{-1}\colon x\mapsto xy,\ y\mapsto x^{-1}$) is the
right-handed trefoil is a convention we will not need: $Q$ is the connected sum of a
trefoil and its mirror, and exchanging the chirality assignment amounts to the handle
relabeling $(x,y)\leftrightarrow(r,s)$ below, under which every construction and every
computation in Section~\ref{sec:experiments} is symmetric.
\end{remark}

\subsection{The closed fiber and the bundle group}\label{subsec:bundle}

The fiber of $Q=3_1\#\overline{3_1}$ is the boundary connected sum of the two trefoil
fibers, and the closed fiber $F$ of $S^3_0(Q)$ splits along the connect-sum circle
into two once-punctured tori. Writing $\pi_1(F)=\langle x,y,r,s\mid [x,y][r,s]\rangle$
with $(x,y)$ carried by the left handle and $(r,s)$ by the right, the dictionary with
\cite[Figure 1]{LP25} is
\[
a\sim x,\quad b\sim y,\quad e\sim r,\quad d\sim s,\qquad
c\sim xr,\qquad [z]=[y]-[s],
\]
where $[c]=[a]+[e]$.  The sign in the last formula is forced by the source
figure: $z$ is disjoint from $c$ and meets $a$ and $e$ once, so the two handle
contributions have opposite signs.  The diagnostic calculations of
Section~\ref{sec:experiments} use $ys^{-1}$ and $s^{-1}y$ as representative
candidate words with this homology class; no based word for $z$ enters the
proof of Theorem~\ref{thm:D}. The matching $\sim$ is
orientation-insensitive: it records which embedded curve realizes which
free-homotopy class; the oriented based developments of Part~II invert $a$
and $e$ (validation V3, Appendix~\ref{app:machine}). Define
\[
\tf:\ x\leftrightarrow r,\ y\leftrightarrow s
\qquad\text{and}\qquad
\tp:=h*\mathrm{id}\colon\ (x,y,r,s)\mapsto (y^{-1},\,yx,\,r,\,s),
\]
lifting $\varphi$ and $ab$ respectively; $\tp$ is well defined on the closed-fiber
group because $h$ fixes the boundary word exactly
(Lemma~\ref{lem:trefoil}(1)).

\begin{lemma}\label{lem:auts}
\begin{enumerate}
\item $\tp$ fixes the relator $[x,y][r,s]$ exactly; $\tf$ sends it to its
$[x,y]$-conjugate. In particular both define automorphisms of $\pi_1(F)$.
\item $\tf^2=\mathrm{id}$, and
$\tp\,\tf\,\tp^{-1}\,\tf^{-1}=h*h^{-1}\colon
(x,y,r,s)\mapsto(y^{-1},\,yx,\,rs,\,r^{-1})$.
\end{enumerate}
\end{lemma}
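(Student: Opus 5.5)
The plan is to verify both parts directly on the four generators $x,y,r,s$, working first in the free group $F_4=\langle x,y,r,s\rangle$ and then passing to the quotient $\pi_1(F)$. The only input is Lemma~\ref{lem:trefoil}. Throughout, Convention~\ref{conv:twists} applies: automorphisms act on the left, compose right to left, and are extended to words by $f(uv)=f(u)f(v)$.

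For (1), I would note that $\tp$ acts as $h$ on the subgroup $\langle x,y\rangle$ and as the identity on $\langle r,s\rangle$. Lemma~\ref{lem:trefoil}(1) gives $h([x,y])=[x,y]$ exactly, so
\[
\tp([x,y][r,s])=[x,y][r,s]
\]
holds already in $F_4$. For $\tf$ we compute $\tf([x,y][r,s])=[r,s][x,y]=[x,y]^{-1}\bigl([x,y][r,s]\bigr)[x,y]$, which is a conjugate of the relator by a power of $[x,y]$. Hence both endomorphisms of $F_4$ carry the normal closure of the relator into itself, so they descend to endomorphisms of $\pi_1(F)$. To see that these are automorphisms, I would exhibit inverses with the same property. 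For $\tf$ the inverse is $\tf$ itself, by (2). For $\tp$ the inverse is $h^{-1}*\mathrm{id}\colon(x,y,r,s)\mapsto(xy,x^{-1},r,s)$. This map also fixes $[x,y]$ exactly, because it is the inverse of a map that does. The composites are the identity on $F_4$, and therefore also on the quotient.

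For (2), $\tf^2=\mathrm{id}$ is immediate, since $\tf$ swaps $x\leftrightarrow r$ and $y\leftrightarrow s$. For the commutator, the right-to-left convention means $\tp\,\tf\,\tp^{-1}\,\tf^{-1}$ applies $\tf^{-1}=\tf$ first, then $\tp^{-1}$, then $\tf$, then $\tp$. Tracking each generator through these four steps gives:
\[
x\mapsto r\mapsto r\mapsto x\mapsto y^{-1},\qquad
y\mapsto s\mapsto s\mapsto y\mapsto yx,
\]
\[
r\mapsto x\mapsto xy\mapsto rs\mapsto rs,\qquad
s\mapsto y\mapsto x^{-1}\mapsto r^{-1}\mapsto r^{-1}.
\]
The result is exactly $h$ on the left handle. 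On the right handle it is $h^{-1}$ transported by the swap $x\leftrightarrow r$, $y\leftrightarrow s$, namely $(r,s)\mapsto(rs,r^{-1})$. This is the map written $h*h^{-1}$ in the statement.

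The only real obstacle is bookkeeping. One must keep the composition order of (C2) distinct from the traversal order of (C1), and use the correct formula for $\tp^{-1}$. Reading the composition in the opposite order would give a different, conjugated answer, so I would cross-check the computation by evaluating the composite once on the words $xy$ and $rs$ as well.
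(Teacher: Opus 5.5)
Your proof is correct and follows the paper's route, which is a direct verification on generators in the free group; the paper also notes the shortcut $\tf(h*\mathrm{id})^{-1}\tf=\mathrm{id}*h^{-1}$, from which the commutator is $(h*\mathrm{id})(\mathrm{id}*h^{-1})=h*h^{-1}$ at once. One aside in your closing caution is wrong, though harmlessly so: $h*\mathrm{id}$ and $\mathrm{id}*h^{-1}$ act on disjoint free factors and therefore commute, so reading the composite in the opposite order gives the same automorphism here, not a different conjugated one.
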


\begin{proof}
Direct computation in the free group (checked by machine;
Appendix~\ref{app:machine}); (2) also follows from block
algebra:
$\tf(h*\mathrm{id})^{-1}\tf=\mathrm{id}*h^{-1}$, so the commutator is
$(h*\mathrm{id})(\mathrm{id}*h^{-1})=h*h^{-1}$.
\end{proof}

Lemma~\ref{lem:auts}(2) realizes the mapping-class factorization
$abd^{-1}e^{-1}=(ab)\varphi(ab)^{-1}\varphi^{-1}$ of \cite{LP25} as a literal
identity of automorphisms: $h*h^{-1}$ is the $0$-surgery monodromy of the square
knot in this model (the left handle twisted by the trefoil, the right by its
mirror).

\begin{proposition}[The presentation]\label{prop:presentation}
\[
\begin{array}{r@{\;}l}
\pi_1(R)=\big\langle x,y,r,s,\alpha,\beta \,\big|\, & [x,y][r,s],\\[2pt]
& \alpha g\alpha^{-1}=\tf(g),\ \beta g\beta^{-1}=\tp(g)\ \
(g\in\{x,y,r,s\})\big\rangle .
\end{array}
\]
\end{proposition}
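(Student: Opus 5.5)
The plan is to obtain the presentation by applying Seifert--van Kampen twice, once for the bundle structure over the punctured torus and once to add the relator of the closed fiber.

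First, the once-punctured torus $\Sigma_{1,1}$ deformation retracts onto a wedge of two circles $\bar\alpha\vee\bar\beta$ based at the basepoint. The bundle $R$ therefore deformation retracts, by a fiber-preserving retraction, onto the restriction of the bundle to this wedge. This restriction is the union of two mapping tori $M_{\tf}$ and $M_{\tp}$ of the closed fiber $F$, glued along the common fiber over the wedge point. Each base loop lifts to a loop through the fiber basepoint precisely because the chosen lifts $\tf,\tp$ are genuine based automorphisms. For $\tf$, the handle swap fixes the connect-sum circle pointwise if we put the basepoint there. For $\tp$, the lift $h*\mathrm{id}$ fixes the boundary of the left handle pointwise, by Lemma~\ref{lem:trefoil}(1).

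Next, we use the standard fact that $\pi_1$ of a mapping torus of $(F,*)$ under a based homeomorphism $f$ is the HNN-type semidirect product
\[
\pi_1(F)\rtimes_{f_*}\Z=\langle \pi_1(F),t\mid tgt^{-1}=f_*(g)\rangle .
\]
Here $t$ is the loop traced by the fixed basepoint. Applying Seifert--van Kampen to the union along $F$ amalgamates the two copies of $\pi_1(F)$. This yields the displayed presentation with generators $x,y,r,s,\alpha,\beta$, the surface relator $[x,y][r,s]$, and the eight conjugation relations. Lemma~\ref{lem:auts}(1) guarantees that $\tf,\tp$ are well defined on $\pi_1(F)$, so these relations are consistent.

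The main obstacle, which I will check with care, is the direction and basing convention. Under (C1) composition runs left to right, so whether the relation reads $\alpha g\alpha^{-1}=\tf(g)$ or $\alpha^{-1}g\alpha=\tf(g)$ depends on the orientation of the base loop relative to the monodromy. This is where I would invoke (C3): replacing $\alpha$ by $\alpha^{-1}$ (that is, $A=[\bar\alpha]^{-1}$) converts one form into the other, and it does not change the group.

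A second point needs to be made explicit. Changing the lifts of the mapping classes alters the presentation only by an inner automorphism. Absorbing that inner automorphism into the generator $\alpha$ or $\beta$ (replacing $\alpha$ by $w\alpha$) gives an isomorphic group, as Remark~\ref{rem:lifts} indicates.

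Finally, I will verify that $\partial R$ is correctly accounted for. The boundary does not impose any relation: the puncture is not filled, so the commutator of $\alpha$ and $\beta$ remains free up to the fiber correction $h*h^{-1}$ of Lemma~\ref{lem:auts}(2).
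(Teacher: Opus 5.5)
Your argument is correct in outline, but it takes a different route from the paper. The paper argues algebraically. The long exact sequence of the fibration gives $1\to\pi_1(F)\to\pi_1(R)\to F_2\to1$. Freeness of $F_2$ splits this sequence, and the presentation is read off from the semidirect product. The section is implicitly chosen so that the conjugation action is exactly $\tf,\tp$, using the freedom of Remark~\ref{rem:lifts}.

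You instead retract $R$ onto its restriction over the spine $\bar\alpha\vee\bar\beta$ and amalgamate two mapping-torus groups along $\pi_1(F)$. This replaces the global splitting by two mapping-torus computations and one amalgamation. It also identifies $\alpha,\beta$ concretely as the tracks of a fiber basepoint fixed by both monodromy representatives, which is exactly the viewpoint of Part~II: the vertical edges of a transport annulus are the track of $p$. The price is that you must exhibit one basepoint, fixed by representatives of both monodromies, at which they induce $\tf$ and $\tp$ \emph{exactly}. The paper instead absorbs the inner-automorphism ambiguity algebraically in its choice of section.

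That basepoint step is where your justification fails. No representative of the handle swap fixes the connect-sum circle pointwise. It exchanges the two sides of this separating circle while preserving the orientation of $F$, so it must reverse the circle's orientation.

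What you need is weaker. Choose a representative that leaves the circle invariant; it reverses the circle, hence fixes points on it, and you put the basepoint at one of them. In the octagon model this is the vertex $p$:
\begin{itemize}
\item the separating chord $V_1V_5$ is reversed by the half-turn $\varphi_0$, with fixed points $p$ and $O$;
\item $\varphi_0$ acts on $\pi_1(F,p)$ by the exact swap;
\item $\psi_0=T_aT_b$ is supported in the interior of the left handle, away from $p$.
\end{itemize}
It is this support property, not the algebraic identity of Lemma~\ref{lem:trefoil}(1), that makes $\psi_0$ fix the basepoint.

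Alternatively, the Dehn--Nielsen--Baer theorem together with point-pushing shows that every automorphism in the outer class of a mapping class is induced by a representative fixing a prescribed basepoint. So any chosen lifts can be realized at one common basepoint. With either repair the rest of your argument goes through. Your closing discussion of $\partial R$ is unnecessary, since the retraction onto the wedge already accounts for the puncture.
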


\begin{proof}
$R$ fibers over the once-punctured torus $T_0$ with aspherical fiber $F$; the long
exact sequence of the fibration reduces to
$1\to\pi_1(F)\to\pi_1(R)\to\pi_1(T_0)\to1$, and $\pi_1(T_0)=F_2\langle\alpha,\beta
\rangle$ is free, so the sequence splits and $\pi_1(R)=\pi_1(F)\rtimes
F_2$ with respect to the chosen automorphism lifts $\tf,\tp$ of the two monodromies.
The displayed presentation is the standard presentation of such a semidirect product.
\end{proof}

\begin{remark}[Lift freedom; where the based-loop indeterminacy lives]\label{rem:lifts}
A mapping class determines its action on $\pi_1(F)$ only up to inner automorphisms.
Replacing $\tf,\tp$ by other lifts changes the presentation of
Proposition~\ref{prop:presentation} by the substitution $\alpha\mapsto\alpha w$,
$\beta\mapsto\beta w'$ ($w,w'\in\pi_1(F)$), an isomorphism of $\pi_1(R)$, but one
that changes the \emph{words} that geometric objects (meridians, surgery directions)
are represented by. All presentations of the surgered manifolds in
Section~\ref{sec:experiments} are therefore \emph{candidates}, well defined only once
based representatives are fixed; the dependence of $\pi_1(V')$ on that
choice is precisely the point of the second experiment
(Section~\ref{subsec:E2}). Fixing those representatives is the
business of Part~II.
\end{remark}

\begin{remark}[Convention robustness]\label{rem:conventions}
Replacing the monodromy lift $\tp$ by $\tp^{-1}$ (the holonomy-direction convention)
is the substitution $\beta\mapsto\beta^{-1}$, which converts the surgery relation
$[z,\alpha]^{\varepsilon}\beta=1$ into $[z,\alpha]^{-\varepsilon}\beta=1$: an
$\varepsilon$-flip, and the computational grid of
Section~\ref{sec:experiments} covers
all $\varepsilon$. Together with Remark~\ref{rem:chirality}, the collapse of the LP
cases below is independent of every orientation and composition convention made
here; the other diagnostic systems (whose convention choices can also change
word order and conjugation) are run in the conventions stated.
\end{remark}

\section{Rigidity of the surgery parameters}\label{sec:rigidity}

Recall from \cite[Section 1]{LP25} the two Lagrangian tori: $T_\beta$, the sub-bundle
with fiber $e$ over $\beta$, and $T_\alpha$, the sub-bundle with fiber $c$ over
$\alpha$ (the monodromy $ab$ fixes $e$ pointwise; $\varphi$ preserves $c$ with
orientation). Their meridians are commutators in the complement
$C:=R\setminus\nu(T_\alpha\cup T_\beta)$:
$\mu_{T_\beta}\simeq[z,\alpha'']$ and $\mu_{T_\alpha}\simeq[d,\beta'']$ (see Remark~\ref{rem:dual}), where
$\alpha'',\beta''$ project to $\alpha,\beta$.

\begin{remark}[The geometric dual of $T_\alpha$]\label{rem:dual}
The proof of \cite[Lemma 5]{LP25} prints the dual as $(b,\beta'')$; this cannot
close up into a torus, since $ab$ does not preserve $b$ even homologically (the
only twist in $ab$ affecting $b$ is $T_a$, and $a\cdot b=1$, so $[ab(b)]$ has
$[a]$-coefficient $\pm1$ in every convention). The curve dual to $c$ that $ab$
\emph{does} fix (pointwise, being disjoint from $a\cup b$, exactly as
\cite[p.~3]{LP25} observe for $e$) is $d$; the dual is therefore
$T^*_\alpha=(d,\beta'')$, with $\mu_{T_\alpha}$ freely homotopic to
$[d,\beta'']$. Since $\varphi$ exchanges $b\leftrightarrow d$, the confusion is a
natural one, and no result of \cite{LP25} is affected: their arguments use only
that each meridian is a commutator of a fiber curve with a curve projecting to the
base. At the $\pi_1$ level, however, the correction matters, and it is used
throughout this paper.
\end{remark}

\begin{lemma}\label{lem:complement}
Let $C=R\setminus\nu(T_\alpha\cup T_\beta)$. Then the inclusion induces an isomorphism
\[
H_1(C)\;\xrightarrow{\ \cong\ }\;H_1(R)=\Z^2\langle\alpha',\beta'\rangle .
\]
In particular every fiber class vanishes in $H_1(C)$.
\end{lemma}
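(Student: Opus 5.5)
The plan is to compute $H_1(R)$ directly from Proposition~\ref{prop:presentation}, and then show that deleting the two tori changes nothing in $H_1$. The two facts used for the second step are standard: the complement of a codimension-$2$ submanifold carries a surjection on $H_1$, and each meridian here bounds a punctured dual torus inside $C$.

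\emph{Step 1: $H_1(R)$.} Abelianize the presentation of Proposition~\ref{prop:presentation}. The relator $[x,y][r,s]$ becomes trivial. The relations $\alpha g\alpha^{-1}=\tf(g)$ give $[x]=[r]$ and $[y]=[s]$. The relations $\beta g\beta^{-1}=\tp(g)$, with $\tp(x)=y^{-1}$ and $\tp(y)=yx$, give
\[
[x]=-[y],\qquad [y]=[y]+[x].
\]
The second relation gives $[x]=0$, and then the first gives $[y]=0$. Hence all four fiber classes vanish. No relation involves $\alpha$ or $\beta$ nontrivially, so $H_1(R)=\Z^2$, freely generated by the classes of $\alpha,\beta$, that is, by any lifts $\alpha',\beta'$ of the base curves. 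This matches the statement, and I would note that this part is just Lemma~\ref{lem:auts} read modulo commutators.

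\emph{Step 2: surjectivity.} Write $N=\nu(T_\alpha\cup T_\beta)$. Since $T_\alpha$ and $T_\beta$ are disjoint closed surfaces of codimension $2$ in the connected $4$-manifold $R$, the complement $C$ is connected. The Mayer--Vietoris sequence for $R=C\cup N$, or equivalently general position of $1$-cycles against codimension-$2$ submanifolds, shows that $H_1(C)\to H_1(R)$ is surjective. Its kernel is generated by the meridians $\mu_{T_\alpha}$ and $\mu_{T_\beta}$. Equivalently, $\pi_1(C)\to\pi_1(R)$ is onto with kernel normally generated by the based meridians.

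\emph{Step 3: the meridians are null-homologous in $C$.} This is the only geometric point, and I expect it to be the main obstacle. By Remark~\ref{rem:dual}, $T_\alpha$ has a dual torus $T_\alpha^*=(d,\beta'')$, which meets $T_\alpha$ transversely in one point. Similarly $T_\beta$ has dual $T_\beta^*=(z,\alpha'')$. Removing a small disk around the intersection point leaves a punctured torus whose boundary is the meridian. The task is to check that this punctured torus lies in $C$, which requires that it miss the \emph{other} surgery torus.

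For this I will use the base. The torus $T_\alpha$ lies over the $\alpha$-cut and $T_\beta$ over the $\beta$-cut, as in (C4). The base curve $\beta''$ of $T_\alpha^*$ meets $\alpha$ once, and it can be taken parallel to $\beta$ and pushed off it. Then $T^*_\alpha$ lies over a curve disjoint from the base of $T_\beta$, so $T^*_\alpha$ misses $T_\beta$. The same argument with $\alpha''$ pushed off $\alpha$ shows that $T^*_\beta$ misses $T_\alpha$. The bundle structure makes the push-offs legitimate: $ab$ fixes $d$ pointwise and $\varphi$ preserves the relevant curve class, so the dual tori close up.

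It follows that each meridian bounds a punctured torus in $C$. Hence it is a commutator in $\pi_1(C)$, and in particular zero in $H_1(C)$. By Step 2 the kernel of $H_1(C)\to H_1(R)$ is then trivial, so the inclusion is an isomorphism onto $\Z^2\langle\alpha',\beta'\rangle$.

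\emph{Step 4: fiber classes.} By Step 1 every fiber class vanishes in $H_1(R)$. Since the map from $H_1(C)$ is now known to be an isomorphism, every fiber class also vanishes in $H_1(C)$.
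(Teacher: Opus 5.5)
Your outline follows the paper's proof. Step~1 is its coinvariant computation. Step~2 is its pair sequence: excision gives $H_1(R,C)=0$ and $H_2(R,C)\cong\Z^2$ on the normal disks. The paper then finishes by quoting from Section~\ref{sec:rigidity} that both meridians are commutators in $\pi_1(C)$, $\mu_{T_\alpha}\simeq[d,\beta'']$ and $\mu_{T_\beta}\simeq[z,\alpha'']$. Your $T_\alpha$ half of Step~3 is correct: $\psi_0$ fixes $d$ pointwise, $d$ meets $c$ once, and $\beta''$ pushed off $\beta$ misses $T_\beta$. The $T_\beta$ half is where your argument fails.

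\textbf{The gap: the $z$-dual does not close up.} The sub-bundle with fiber $z$ over $\alpha''$ has monodromy $\varphi$, and $\varphi$ does \emph{not} preserve the class of $z$. With $[z]=[y]-[s]$ and $\tf$ the exact swap $y\leftrightarrow s$, one gets $\varphi_*[z]=-[z]$. This is forced, not a sign convention. $\varphi_*$ is a symplectic involution of $H_1(F)$ with trace $0$ (Lefschetz, two fixed points). Its $+1$-eigenspace is therefore a $2$-dimensional symplectic subspace containing $[c]$. Hence any $\varphi$-invariant class orthogonal to $[c]$ is a multiple of $[c]$, and so pairs trivially with $[e]$. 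No curve that misses $c$ and meets $e$ once can be carried to itself with orientation. At best $(z,\alpha'')$ is a Klein bottle, and a punctured Klein bottle only shows $[\mu_{T_\beta}]=\pm2[z]$ in $H_1(C)$. That is circular, since you are trying to prove fiber classes vanish in $H_1(C)$. The form $[z,\alpha'']$ that the paper cites has the same defect, so its one-line step needs the same repair.

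\textbf{The repair.} Because you push $\alpha''$ off $\alpha$, the dual's fiber curve no longer has to miss $c$. It only has to be $\varphi$-invariant and pair $\pm1$ with $e$. The class $[y]+[s]$ (that is, $[b]+[d]$) does both: $\tf_*$ fixes it, and $([y]+[s])\cdot[r]=\pm1$. Transport a $1$-cycle in this class once around $\alpha''$, and close it up by a $2$-chain in a single fiber over a point of $\alpha''$ off $\beta$. This gives $\Sigma_\beta\in H_2(R)$ with $\Sigma_\beta\cdot T_\alpha=0$ and $\Sigma_\beta\cdot T_\beta=\pm1$. Likewise $\Sigma_\alpha=[d\times\beta'']$ has $\Sigma_\alpha\cdot T_\alpha=\pm1$ and $\Sigma_\alpha\cdot T_\beta=0$. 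So the intersection map $H_2(R)\to H_2(R,C)\cong\Z^2$ is onto. Exactness of $H_2(R)\to H_2(R,C)\to H_1(C)\to H_1(R)\to0$ then gives the isomorphism directly. No embedded dual torus, and no commutator form of the meridian, is needed for the homological statement.
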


\begin{proof}
First, $H_1(R)\cong\Z^2$: by Proposition~\ref{prop:presentation}, $H_1(R)$ is the
direct sum of $\Z^2\langle\alpha,\beta\rangle$ and the coinvariants of
$H_1(F)=\Z^4$ under the subgroup generated by $\tf_*$ and $\tp_*$; writing
$\Phi,\Psi$ for the matrices of $\tf_*,\tp_*$, the stacked matrix
$(\Phi-I;\Psi-I)$ has Smith normal form $\mathrm{diag}(1,1,1,1)$ (a direct computation,
also checked by machine), so the coinvariants vanish. (Concretely: $\Psi-I$ is
invertible on the $(x,y)$-block, killing $x,y$; then the swap relations kill $r,s$.)

For the complement, excision and the K\"unneth formula for
$(\nu(T),\partial\nu(T))\cong T^2\times(D^2,\partial D^2)$ give
$H_1(R,C)=0$ and $H_2(R,C)\cong\Z^2$, generated by the two normal
disks, whose boundaries are the meridians. The homology sequence of the pair,
\[
H_2(R,C)\xrightarrow{\ \partial\ }H_1(C)\to H_1(R)\to H_1(R,C)=0,
\]
therefore presents $H_1(R)$ as the quotient of $H_1(C)$ by the images of the
meridians; each meridian is a commutator in $\pi_1(C)$, hence trivial in $H_1(C)$, so
the map $H_1(C)\to H_1(R)$ is an isomorphism.
\end{proof}

\begin{proof}[Proof of Proposition~\ref{prop:rigidity}]
\emph{Forcing.} A $1/k$ Luttinger surgery on $T_\beta$ along an embedded
direction, i.e.\ a primitive class $p\beta'+q e$ on $T_\beta$ ($\gcd(p,q)=1$), fills
the boundary of $\nu(T_\beta)$ so that the class $\mu_{T_\beta}+k(p\beta'+qe)$ dies
(orientation conventions are absorbed into the sign of $k$). By
Lemma~\ref{lem:complement}, in $H_1$ this relation reads $kp\,\beta'=0$, since the
meridian is a commutator and $e$ is a fiber class. Together with the corresponding
relation $k'p'\,\alpha'=0$ from $T_\alpha$,
\[
H_1(V')\;=\;\Z^2\big/\langle kp\,\beta',\,k'p'\,\alpha'\rangle
\;=\;\Z/|kp|\oplus\Z/|k'p'| ,
\]
which vanishes iff $|kp|=|k'p'|=1$: the coefficients are $\pm1$ and the directions
have base component $\pm1$, i.e.\ equal $\beta',\alpha'$ modulo fiber classes.
Purely-fiber directions ($p=0$) impose no relation and leave $H_1=\Z^2$.

\emph{Realization.} Conversely, for coefficients $\pm1$ and directions
$\beta'e^m,\alpha'c^n$ the same computation gives $H_1(V')=0$.

\emph{Preserved structure.} Each direction above is a primitive class realized by an
embedded curve on the corresponding Lagrangian torus, so the surgeries are Luttinger
surgeries and the result carries a symplectic structure \cite{Lut95,ADK03}; $\chi$ is
unchanged. The argument of \cite[Lemma 5]{LP25} now applies verbatim: $H_1(V')=0$ and
$\partial V'$ connected give $H_3(V')=0$ and $H_2(V')$ free; $\chi(V')=2$ gives
$H_2(V')\cong\Z$, generated by a fiber $F$ disjoint from all surgery tori; $F^2=0$;
since $H_1(V')=0$ we get $H_2(V';\Z/2)\cong\Z/2\langle F\rangle$, and evaluation
against it detects $H^2(V';\Z/2)$, so $\langle w_2,F\rangle=F\cdot F=0$ forces
$w_2(V')=0$: $V'$ is spin. All surgeries take place in $\Int V$, so $\partial V'=\partial V=S^3_0(Q)$ and
the free involution $\sigma$ descends; the proof of \cite[Lemma 7]{LP25} applies
verbatim and $W'=V'/\sigma$ is a spin rational homology $4$-sphere with
$H_1(W')=\Z/2$.

\emph{Persistence of the obstruction input.} The double $Z'=V'\cup_\sigma V'$ is
obtained from the genus-$2$ bundle $R\cup_\sigma R$ over a genus-$2$ surface by
Luttinger surgeries on disjoint Lagrangian tori; the fiber $F$ and the section
$\hat\Gamma$ coming from a fixed point of $\varphi$ (\cite[Lemma 6]{LP25}) form a
hyperbolic pair, and all surgery tori miss a fiber (they lie over curves in the base,
which miss a point) and miss the two fixed-point sections (the fiber curves
$c,e$ can be isotoped off the two fixed points of
$\varphi$, exactly as in \cite[Lemma 6]{LP25}).  The Thurston form can be chosen
so that both $F$ and $\hat\Gamma$ are symplectic: its fiber term is positive on
$F$, and a sufficiently large base term is positive on the section.  Luttinger
surgery leaves the form unchanged away from the surgery neighborhoods, so these
two surfaces remain symplectic for every admissible coefficient and direction.
Lemma~\ref{lem:notori} turns precisely these facts into the slicing obstruction;
no generality beyond its stated hypotheses is attributed to \cite{SS23}.
\end{proof}

\begin{lemma}[No square-zero tori]\label{lem:notori}
Let $V'$ be an admissible variant and suppose that its double
$Z'=V'\cup_\sigma V'$ is simply connected.  No nonzero square-zero class in
$H_2(Z';\Z)$ is represented by a smoothly embedded torus.
\end{lemma}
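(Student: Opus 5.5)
Since $Z'$ is simply connected, its intersection form is even of rank two and signature zero. The form is even because $Z'$ is a double of a spin manifold along a boundary involution that preserves the spin structure, and Proposition~\ref{prop:rigidity} gives $\chi(Z')=2\chi(V')=4$. By the $\pi_1=1$ hypothesis we get $b_2(Z')=2$, so the form is the hyperbolic form $H$. The plan is to use the hyperbolic pair $\{F,\hat\Gamma\}$ from the proof of Proposition~\ref{prop:rigidity}. Both are symplectic surfaces of genus $2$, with $F\cdot F=0$, $F\cdot\hat\Gamma=1$, and $\hat\Gamma\cdot\hat\Gamma=0$ after adjusting the section if necessary; evenness forces the self-intersection to be even, and we may correct it by adding a multiple of $F$. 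They therefore form a basis of $H_2(Z')$. The nonzero square-zero classes are then exactly the nonzero multiples $m[F]$ and $m[\hat\Gamma]$, $m\neq 0$, because $(aF+b\hat\Gamma)^2=2ab$ vanishes only if $a=0$ or $b=0$.

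The next step is to apply a genus bound for symplectic manifolds. $Z'$ is symplectic, since it is obtained from a symplectic surface bundle over a genus-$2$ surface by Luttinger surgeries, and it has $b_2^+=1$. For $b_2^+=1$ the adjunction inequality for square-zero classes needs care about chambers, so I would invoke the version for symplectic $4$-manifolds with $b^+=1$. This is the Li--Liu / Taubes SW$=$Gr statement as used in \cite{SS23}: if $\Sigma$ is a symplectic surface of genus $g\ge1$ with $\Sigma^2\ge0$, then any smoothly embedded surface $S$ with $[S]=m[\Sigma]$, $m\neq0$, satisfies $g(S)\ge |m|(g-1)+1$. This follows from the adjunction inequality $2g(S)-2\ge S^2+|K\cdot S|$ for the canonical class $K$, which has nonvanishing Seiberg--Witten invariant in the symplectic chamber; the wall-crossing issue is avoided because $S^2\ge0$ and $S$ lies in the forward cone of $[\omega]$. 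With $g=2$ we get $K\cdot F=2g-2-F^2=2$, so $|K\cdot mF|=2|m|$. This gives $2g(S)-2\ge 2|m|$, hence $g(S)\ge |m|+1\ge 2$. The same computation applies to $\hat\Gamma$. In both cases a torus would need $g(S)\le1$, which is impossible.

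The main obstacle is justifying the adjunction inequality in the $b^+=1$ setting for classes in both rays, including negative multiples. The sign of $m$ is handled by orientation reversal of $S$. For the forward cone I need $[\omega]\cdot F>0$ and $[\omega]\cdot\hat\Gamma>0$, which hold because both surfaces are symplectic. For the inequality I would cite the statement used in \cite{SS23}, the symplectic Thom conjecture / Li--Liu adjunction for $b^+=1$, and I would check its hypotheses, namely positive self-intersection $\ge0$ and the symplectic representative, for $F$ and $\hat\Gamma$ exactly as recorded in the last paragraph of the proof of Proposition~\ref{prop:rigidity}. A secondary check is that the hyperbolic pair really spans $H_2(Z';\Z)$ rather than a finite-index sublattice. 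This follows from unimodularity, since $\det\begin{pmatrix}0&1\\1&0\end{pmatrix}=-1$.
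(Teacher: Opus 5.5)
Your outline (hyperbolic basis $F,\hat\Gamma$; the square-zero classes are the multiples of the two axes; a genus bound from symplectic topology) matches the paper's. Your genus bound, however, takes a different route. The paper never applies an adjunction inequality to an arbitrary surface. For each $k>0$ it builds a connected unbranched $k$-fold cover $\Sigma_k=\{z^k=\epsilon f(u)\}\subset F\times D^2$, which is symplectic of genus $k+1$ in the class $kF$ (and likewise for $\hat\Gamma$). It then invokes the symplectic Thom conjecture \cite{OS00}, which needs no chamber bookkeeping. Your Seiberg--Witten adjunction argument avoids building these multisections and is fine for $kF$. The condition that actually matches the neck-stretching chamber of $S$ to Taubes' chamber is $K\cdot S>0$, not membership in the forward cone: the line in $\CP^2$ is forward yet violates the absolute-value inequality. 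Here $K\cdot F=2$, so the condition holds for $kF$.

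The genuine gap is $\hat\Gamma^2=0$. Adjusting the section by a multiple of $F$ does not work. If $\hat\Gamma^2=2j\neq0$, the square-zero class $G=\hat\Gamma-jF$ is no longer carried by a symplectic genus-$2$ surface, so ``the same computation applies to $\hat\Gamma$'' is unsupported. Adjunction for the actual section gives $K\cdot\hat\Gamma=2-2j$, hence $K\cdot G=2-4j$. Since $G$ pairs positively with the forward class $F$, it is itself forward. So for $j>0$ you are looking at a forward class with $K\cdot G<0$, exactly where your inequality has no justification.

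You must show that the symplectic section itself has square zero. The paper does this geometrically. Since $D\psi_0|_p=I$ and $D\varphi_0|_p=-I$, the vertical normal bundle along the section over each punctured-torus half is a flat $SO(2)$ bundle with trivial boundary holonomy and zero Euler class. The fiber part of $\sigma$ is a single fixed diffeomorphism, so the clutching map is constant and has degree zero.

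Within your own framework there is a shorter repair. Write $K=uF+v\hat\Gamma$. Adjunction gives $v=K\cdot F=2$ and $u+2\hat\Gamma^2=K\cdot\hat\Gamma=2-\hat\Gamma^2$. Therefore $K^2=4u+4\hat\Gamma^2=8-8\hat\Gamma^2$. But $K^2=2\chi+3\sigma=8$, which forces $\hat\Gamma^2=0$, and your argument then goes through on both axes.
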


\begin{proof}
The double is symplectic.  Since $\chi(Z')=4$ and $\pi_1(Z')=1$, its second
homology has rank $2$; the classes $F,\hat\Gamma$ span it because their
intersection matrix below is unimodular.  Thus
$H_2(Z';\Z)=\Z\langle F,\hat\Gamma\rangle$ with
\[
F^2=\hat\Gamma^2=0,\qquad F\mathbin\cdot\hat\Gamma=1,
\]
where both $F$ and $\hat\Gamma$ are symplectic surfaces of genus $2$ by the
preceding paragraph.  Here $F^2=0$ is the bundle framing.  For the section,
use the explicit gluing of \cite[Lemmas 4 and 6]{LP25} and choose the section
through $p$.  In the local model of Section~\ref{subsec:octagon}, the two
monodromies have derivatives $D\psi_0|_p=I$ and $D\varphi_0|_p=-I$: the twists
are supported away from $p$, while $\varphi_0$ is the half-turn.  Thus the
vertical normal bundle on either punctured-torus half carries a flat
$SO(2)$ structure with these two holonomies.  Its boundary holonomy is their
commutator, hence is exactly $I$, and parallel transport supplies a boundary
framing.  This framing extends over the half: capping the boundary by a
trivial flat disk gives the flat $SO(2)$ bundle on a torus defined by the
commuting rotations $I,-I$, whose Euler class is zero.  Use these extending
framings on both halves.  The fiber component of $\sigma$ is the single diffeomorphism
appearing in \cite[Lemma 4]{LP25}, independent of the boundary parameter,
and its derivative at $p$ is therefore a constant linear clutching map in
these framings.  The associated map $S^1\to GL^+(2,\mathbb R)$ has degree
zero, so the closed normal bundle has Euler number zero and
$\hat\Gamma^2=0$.  The surfaces meet once because one is a fiber and the
other a section.

For each $k>0$, choose a smooth map
$f:F\to S^1$ inducing an epimorphism on $H_1$ and, in
$F\times D^2$, take
\[
\Sigma_k=\{(u,z):z^k=\epsilon f(u)\},
\]
where $S^1\subset\mathbb C$ and $\epsilon>0$ is small.  This is an embedded,
unbranched degree-$k$ cover of $F$.  It is connected because the monodromy
of the cover is the reduction of $f_*:\pi_1(F)\twoheadrightarrow\mathbb Z$
modulo $k$, hence is transitive.  It therefore has genus $k+1$ and represents
$kF$.  For sufficiently small $\epsilon$, its projection to $F$
is locally orientation-preserving and the fiber-area term is positive on
it, so $\Sigma_k$ is symplectic.  The same construction applies to
$\hat\Gamma$.  By the
symplectic Thom theorem \cite{OS00}, these surfaces minimize genus in the
classes $kF$ and $k\hat\Gamma$.  Reversing the orientation of a representative
does not change its genus, hence
\[
g_{Z'}(kF)=g_{Z'}(k\hat\Gamma)=|k|+1
\qquad(k\ne0).
\]
This is the argument used for the square-zero axes in
\cite[proof of Theorem 1.4]{SS23}, written here with its actual hypotheses.

If $u=aF+b\hat\Gamma$, then $u^2=2ab$.  Thus a nonzero square-zero class is
$kF$ or $k\hat\Gamma$ for some $k\ne0$, and its minimal genus is
$|k|+1\ge2$.  It therefore has no torus representative.
\end{proof}

\section{The reduction theorem}\label{sec:reduction}

\begin{proof}[Proof of Theorem~\ref{thm:reduction}]
\emph{(a).} The double cover $Z'\to W'$ gives a short exact sequence
$1\to\pi_1(Z')\to\pi_1(W')\to\Z/2\to1$, with $\pi_1(Z')\hookrightarrow\pi_1(W')$
\emph{injective} by covering theory.

($\Leftarrow$) If $\pi_1(Z')=1$ then $\pi_1(W')\cong\Z/2$. Now $W'$ and $B$ are
closed, oriented, \emph{smooth} spin $4$-manifolds with $\pi_1=\Z/2$
(Proposition~\ref{prop:rigidity} for $W'$; \cite{Kaw09,LP25} for $B$) and both are
rational homology spheres, so both intersection forms on $H_2/\mathrm{tors}$ are
trivial, both $w_2$-types are type (II), and both Kirby--Siebenmann invariants vanish
(smoothness; alternatively $\KS\equiv\sigma/8\ (\mathrm{mod}\ 2)$ for spin topological
$4$-manifolds). By the classification of closed oriented topological $4$-manifolds
with finite cyclic fundamental group (\cite[Theorem C]{HK93}: $\pi_1$, the
intersection form on $H_2/\mathrm{Tors}$, the $w_2$-type and $\KS$ are a complete
set of invariants, extending \cite[Theorem B]{HK88} from odd order; see also
\cite[Theorem 5.1]{Ham08} and \cite{Tei92}) we get $W'\homeo B$. In fact, by the $b_2=0$ case as
stated in \cite{BSS24} there are exactly two such topological manifolds for
each
cyclic group, one spin and one non-spin; the spin one is the spun lens
space, so
$W'\homeo B\homeo L_2$ in the notation of \cite{BSS24}.

($\Rightarrow$) A homeomorphism gives $\pi_1(W')\cong\pi_1(B)=\Z/2$, and the
injection above forces $|\pi_1(Z')|=1$.

\emph{(b).} Assume $\pi_1(Z')=1$.

\emph{Step 1: $Z'$ is homeomorphic to $S^2\times S^2$.} $Z'$ is closed, simply
connected, spin (it double covers the spin manifold $W'$; alternatively glue the spin
structures of the two copies of $V'$), with $\chi(Z')=2\chi(V')-\chi(S^3_0(Q))=4$, so
$b_2(Z')=2$. The classes of the fiber $F$ and of the closed section
$\hat\Gamma=\Gamma\cup_\sigma\Gamma$ satisfy $F^2=0$ and $F\cdot\hat\Gamma=1$; since
the form is even (spin), a change of basis
$\hat\Gamma\mapsto\hat\Gamma-\tfrac{1}{2}(\hat\Gamma^2)F$ exhibits the intersection
form as the hyperbolic form $H$. By Freedman's classification \cite{Fre82}, the
form $H$, together with the vanishing Kirby--Siebenmann invariant of the smooth
manifold $Z'$, determines the homeomorphism type; hence
$Z'\homeo S^2\times S^2$.

\emph{Step 2: $Z'$ is not diffeomorphic to $S^2\times S^2$.} This is the argument of
\cite[Theorem 8]{LP25}, which is $\pi_1$-agnostic: $R\cup_\sigma R$ is a non-trivial
genus-$2$ surface bundle over a genus-$2$ surface, hence aspherical, hence minimal
and neither rational nor ruled, so its Kodaira dimension satisfies
$\kappa\neq-\infty$; Luttinger surgery preserves $\kappa$ \cite{HL12}, and $Z'$ is
minimal (it is spin), so $\kappa(Z')\neq-\infty$. Since $\kappa$ is a diffeomorphism
invariant \cite{Li06} and $\kappa(S^2\times S^2)=-\infty$, $Z'$ is not diffeomorphic
to $S^2\times S^2$.

\emph{Step 3: the pair $(B,W')$.} By (a), $W'\homeo B$. The knot $4_1$ is slice in
$B$ by construction \cite{Kaw09,LP25}. It is not slice in $W'$: following
\cite[Section 2]{LP25}, since $W'$ is spin the relative Rokhlin theorem
\cite[Theorem 2]{Klu21} and $\mathrm{Arf}(4_1)=1$ rule out a null-homologous slice
disk; a homologically essential slice disk makes the $0$-trace $X_0(4_1)$ embed in
$W'$ with $H_2$-essential image, producing a square-zero essential torus $T\subset
W'$ (cap a Seifert surface with the disk); since $X_0(4_1)$ is simply connected, $T$
lifts to an essential square-zero torus in $Z'$; and
Proposition~\ref{prop:rigidity} verifies for $Z'$ the hypotheses of
Lemma~\ref{lem:notori}, which forbids such a torus. Sliceness of $4_1$
distinguishes the smooth structures, so $W'$ is homeomorphic and not diffeomorphic
to $B$.

\emph{(c).} Assume $\pi_1(V')=1$. The double $Z'$ and the reglued manifold
$Z''=V'\cup_{f\circ\sigma}V'$ (with $f$ the boundary self-diffeomorphism of
\cite[Figure 3]{LP25}) are unions of two copies of $V'$ along $S^3_0(Q)$; by van
Kampen each fundamental group is a quotient of $\pi_1(V')*\pi_1(V')=1$, so
$\pi_1(Z')=\pi_1(Z'')=1$ and parts (a)--(b) apply. (This is where the hypothesis
$\pi_1(V')=1$ is genuinely used: the regluing twists the van Kampen data by $f_*$,
and no implication from $\pi_1(Z')=1$ alone is claimed; see
Remark~\ref{rem:V1}.) The framing-parity change makes $Z''$ non-spin with $b_2=2$
and signature $0$, so its intersection form is
$\langle1\rangle\oplus\langle-1\rangle$ and Freedman gives
$Z''\homeo\CP^2\#\bCP^2$. The relative invariants $\Psi_{V',\mathfrak{t}_\pm}$ are
non-zero exactly as in \cite[Lemma 9]{LP25}: $Z'$ is symplectic and its canonical
class evaluates $\pm2$ on a fiber disjoint from the surgery tori (Luttinger
surgery does not change this pairing \cite{ADK03,HL12}), so
\cite{OS04} applies as there (cf.\ \cite{Thu76}). By
\cite[Lemma 10]{LP25} the mixed invariant of $Z''$ for the line
$\mathrm{span}\{F\}$ is non-zero; since $\CP^2\#\bCP^2$ (indeed
$\CP^2\#\bCP^2\#D$ for any homology sphere $D$) admits, for either square-zero line, a
square-zero splitting along $S^2\times S^1$ forcing all mixed invariants to vanish
(\cite[proof of Theorem 2]{LP25}, using $HF_{red}(S^2\times S^1)=0$), $Z''$ is not
diffeomorphic to $\CP^2\#\bCP^2$.
\end{proof}

\begin{remark}\label{rem:V1}
The implication $\pi_1(V')=1\Rightarrow\pi_1(Z')=1$ is immediate from van Kampen; we
do not know whether the converse holds in this setting, since in the amalgam
$\pi_1(Z')=\pi_1(V')*_{\pi_1 S^3_0(Q)}\pi_1(V')$ the edge maps need not be injective.
The distinction affects only part (c): parts (a) and (b) use $\pi_1(Z')$ alone,
while the regluing twists the van Kampen data by $f_*$ and is controlled here through
$\pi_1(V')$. It is in any case immaterial in practice: any proof of simple
connectivity in this frame proceeds by computing $\pi_1(V')$, which is what Part~II
does.
\end{remark}

\begin{corollary}\label{cor:gate}
The homeomorphic upgrade of \cite[Theorem 1]{LP25} within the
Lidman--Piccirillo setting is
equivalent to the exotic $S^2\times S^2$ problem: it holds for some
admissible variant if and only if some admissible
double $Z'$ is an exotic $S^2\times S^2$. Conversely, exhibiting $\pi_1(V')=1$ for
any admissible variant yields simultaneously an exotic $S^2\times S^2$, a
homeomorphic-but-not-diffeomorphic pair distinguished by unconstrained slicing, and a
simply connected exotic $\CP^2\#\bCP^2$.
\end{corollary}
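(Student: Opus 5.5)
The corollary is a repackaging of Theorem~\ref{thm:reduction} together with Proposition~\ref{prop:rigidity}, so the plan is to read off each of its two sentences from parts (a)--(c) of that theorem applied to an arbitrary admissible variant $V'$. The first step is to make precise what ``the homeomorphic upgrade of \cite[Theorem 1]{LP25} within the Lidman--Piccirillo setting'' means. I will take it to mean that, for some admissible variant $V'$, the quotient $W'=V'/\sigma$ is homeomorphic to $B$. Proposition~\ref{prop:rigidity} guarantees that every such $W'$ still carries the slicing obstruction for $4_1$, so an upgrade of this form really is a homeomorphic pair distinguished by sliceness.

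For the equivalence, I fix an admissible $V'$. Theorem~\ref{thm:reduction}(a) gives $W'\homeo B$ if and only if $\pi_1(Z')=1$. Theorem~\ref{thm:reduction}(b) then shows that $\pi_1(Z')=1$ makes $Z'$ homeomorphic but not diffeomorphic to $S^2\times S^2$. Conversely, if $Z'$ is an exotic $S^2\times S^2$, then $Z'$ is in particular homeomorphic to $S^2\times S^2$, hence simply connected, and (a) returns $W'\homeo B$. Quantifying existentially over admissible variants gives the stated ``for some \dots\ if and only if some'' equivalence; both sides are computed from the same variant, which is compatible with the existential phrasing. The one point needing care is that ``exotic $S^2\times S^2$'' here means a double $Z'$ of an admissible variant. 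The reverse direction uses only that exoticness entails $\pi_1(Z')=1$, so no statement about exotic $S^2\times S^2$'s outside this family is claimed.

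For the final sentence, I will assume $\pi_1(V')=1$ for some admissible $V'$. Van Kampen gives $\pi_1(Z')=1$, as noted in Theorem~\ref{thm:reduction}(c). Part~(b) then supplies the exotic $S^2\times S^2$ and the pair $(B,W')$ that is homeomorphic, non-diffeomorphic, and distinguished by sliceness of $4_1$; the slice disk in $B$ is unconstrained in homology class, as in Theorem~\ref{thm:B}. Part~(c) supplies the simply connected exotic $\CP^2\#\bCP^2$ obtained by regluing.

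I do not expect a real obstacle: all the substantive work, namely the Hambleton--Kreck classification, the Kodaira dimension argument, Lemma~\ref{lem:notori}, and the Ozsv\'ath--Szab\'o mixed invariants, is already contained in Theorem~\ref{thm:reduction}. The only delicate step is the interpretive one above. ``Equivalent'' must be read as equivalence of the existence statements over the admissible family, not as equivalence with the general exotic $S^2\times S^2$ problem. I will say this explicitly, and point to Remark~\ref{rem:V1} to explain why the last sentence requires the stronger hypothesis $\pi_1(V')=1$ rather than $\pi_1(Z')=1$.
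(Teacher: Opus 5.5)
Your proposal is correct and is essentially the paper's argument. The paper gives no separate proof: it reads the corollary off Theorem~\ref{thm:reduction}(a)--(c) variant by variant, with van Kampen (Remark~\ref{rem:V1}) supplying $\pi_1(V')=1\Rightarrow\pi_1(Z')=1$. One minor point: the slicing obstruction in $W'$ goes through Lemma~\ref{lem:notori}, which requires $\pi_1(Z')=1$. Part~(a) supplies exactly this whenever $W'\homeo B$, so your appeal to Proposition~\ref{prop:rigidity} for ``every such $W'$'' is harmless.
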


\section{The obstruction is based-loop data}\label{sec:experiments}

The computations in this section are coset enumerations and abelianizations in
GAP~4.16 \cite{GAP}. The scripts are included among the ancillary files
(Appendix~\ref{app:machine}), and each run takes seconds on a laptop. The
enumeration cap is $4\times10^5$ cosets. Exceeding this cap is recorded as
\emph{not visibly trivial} and is not interpreted as nontriviality.

\subsection{Candidate presentations}\label{subsec:candidates}

By Proposition~\ref{prop:presentation} and Remark~\ref{rem:lifts}, a presentation of
$\pi_1(V')$ requires, beyond the exact bundle relations, based representatives for
the two surgery relations. The \emph{uncorrected candidates} take the naive words
suggested by the free homotopy types: filling relations
\[
[z,\alpha]^{\varepsilon_1}\cdot\beta\,r^m=1,\qquad
[w,\beta]^{\varepsilon_2}\cdot\alpha\,(xr)^n=1,
\]
with $z\in\{ys^{-1},s^{-1}y\}$, $\varepsilon_i\in\{\pm1\}$,
mixed-direction exponents
$m,n$ as in Proposition~\ref{prop:rigidity}, and, in the parallel-tori scheme, the
additional fiber-direction relations $[z,\alpha]^{\varepsilon_3}r=1$,
$[w,\beta]^{\varepsilon_4}(xr)=1$. The grid of Section~\ref{subsec:E1} takes $w=y$, the dual printed in
\cite{LP25}; the corrected dual $w=s$ of Remark~\ref{rem:dual} is run separately.

\subsection{The uncorrected candidates collapse universally}\label{subsec:E1}

Over all $8$ conventions for the LP surgeries, the $32$ mixed-direction cases
($m,n\in\{-1,1\}$), and the $32$ parallel-tori cases, \emph{for every one of the $72$
uncorrected candidate groups the enumeration completes with index $1$:
the trivial group} (Appendix~\ref{app:machine}; under a second), while the
controls do not
(the bundle group alone has $H_1=\Z^2$, and either surgery alone leaves $H_1=\Z$).
This universal collapse does not validate the candidates, because they retain
the unsurgered monodromy relations without checking whether the corresponding
transport annuli survive in the torus complement. If all those relations were
valid, then $\pi_1(V)=1$ and Theorem~\ref{thm:reduction} would give an exotic
$S^2\times S^2$. That \cite{LP25} makes no such claim is only historical
context. The relevant obstruction is geometric and can be localized directly.
A monodromy relation
$\beta g\beta^{-1}=\tp(g)$ is carried by a transport annulus $g\times\beta$; in the
surgered manifold the relation holds only if the annulus avoids the surgery tori.
Intersection numbers in $R$ decide this: an annulus over one base curve meets a
sub-bundle torus over the transverse base curve above their intersection point, and
meets a torus over the \emph{same} base curve along it; in both cases the count is
the intersection number of the fiber curves. Using $[c]=[a]+[e]$:
\[
\begin{array}{l|l}
\text{relation carried over }\alpha\text{ or }\beta & \text{obstruction} \\
\hline
x\ (\sim a),\ r\ (\sim e)\text{ over }\alpha,\beta & a\cdot c=a\cdot e=e\cdot c=0:
\ \text{clean}\\
y\ (\sim b)\text{ over }\beta & b\cdot c=1:\ \text{crosses }T_\alpha\\
y\text{ over }\alpha & b\cdot c=1:\ \text{crosses }T_\alpha\\
s\ (\sim d)\text{ over }\alpha & d\cdot c=1:\ \text{crosses }T_\alpha\\
s\text{ over }\beta & d\cdot e=1,\ d\cdot c=1:\ \text{crosses }T_\beta,\,T_\alpha
\end{array}
\]
Exactly the four relations for $y\sim b$ and $s\sim d$ are broken; in $V'$ they hold
only with insertions of conjugates of words carried by the glued-in tori
($c$-, $e$-, $\alpha'$-, $\beta'$-words). The four relations for $x$ and $r$ are
clean.

\begin{remark}[Embedded count versus based count]\label{rem:counts}
The count above is that of transport annuli over the \emph{embedded} curves.
Like the correction words themselves, it is diagram data. In the based
diagram of Part~II the base loops meet the cut circles minimally
($\bar\alpha$ never meets the $\alpha$-cut and $\bar\beta$ never meets the
$\beta$-cut), and two of the four crossings predicted above disappear: the
transport annulus of $y$ over $\bar\alpha$ meets no surgery torus at all
($\bar\alpha$ misses the $\alpha$-cut over which $T_\alpha$ sits, and $y$
misses $e$), so that relation is clean there; and the annulus of $s$ over
$\bar\beta$ misses $T_\beta$, keeping only its single $T_\alpha$-crossing.
Exactly \emph{three} relations acquire corrections in that diagram, each at
a single crossing (Remark~\ref{rem:workedmembrane}).
\end{remark}

\subsection{The corrections decide the group}\label{subsec:E2}

Dropping the four broken relations leaves a group with $H_1=\Z^2$
(Appendix~\ref{app:machine}). Thus the broken relations are already necessary
for $H_1(V')=0$, and the group defined by the clean relations alone is not a
homology-correct approximation to $\pi_1(V')$.

A direct sensitivity experiment sharpens the point. Sampling candidate
correction words in
the format above (conjugates of the meridians and of the words carried by the
glued-in tori, inserted at the broken relations), the same computation presents $120$
candidate groups: for $69$ the enumeration completes with index $1$, for
$44$ it exceeds the
cap, and the remaining $7$ are detectably wrong ($H_1\neq0$); no case
produced a nontrivial finite group. \emph{Both nondegenerate outcomes are generic.}

The value of $\pi_1(V')$ --- and with it, by Theorem~\ref{thm:reduction}, the
existence of an exotic $S^2\times S^2$ --- therefore depends on the based-loop
correction words. Neither a trivial group obtained from guessed words nor an
enumeration that exceeds the coset cap determines $\pi_1(V')$; the required
input is a based geometric diagram.

\subsection{Finite checks of the Akhmedov--Park presentation}\label{subsec:E3}

The manifold of \cite{AP10} is built from the same involution: their model
surface
$(\Sigma_2\times\Sigma_3)/\Z_2$ fibers over $\Sigma_2$ with monodromy the same
two-fixed-point involution class as $\varphi$. Their Theorem 9 asserts
$\pi_1(M^p_n)=\Z/p$ from the presentation printed in their Lemma 8. Coset enumeration
from that presentation (Appendix~\ref{app:machine}) confirms
\[
\pi_1(M^p_n)=\Z/p \quad\text{for}\quad
(n,p)\in\{(1,1),(2,1),(3,1),(1,2),(1,3)\}.
\]
These five computations agree with \cite[Theorem~9]{AP10} at the tested
parameters but do not establish the assertion for arbitrary $n,p$. They are
included to emphasize that agreement with the predicted group at several
parameters does not replace a geometric derivation of the meridian and
Lagrangian-framing words. In \cite{AP10} those words are the six triples of
Lemma~7, with the fifth and sixth involving a path that is not a sub-bundle
coordinate.

\subsection{What Part II must supply}\label{subsec:partIIspec}

The model of Section~\ref{sec:model} reduces the obstruction to a finite, fully
specified computation: derive, from an explicit based picture of $F$ carrying the
five-chain $a,\dots,e$ and the basepoint, the actual correction words for the broken
monodromy relations and the based surgery data, in the style of
\cite[Lemma 7]{AP10} or Baldridge--Kirk \cite{BK08}. Because the crossing pattern of
Section~\ref{subsec:E1} is fixed by the intersection combinatorics, a single based
diagram parameterizes the entire exponent family $V'_{m,n}$ of
Proposition~\ref{prop:rigidity} at once. Part~II carries this out.

\newpage
\partheading{II}{The computation}

\section{The based model}\label{sec:basedmodel}

\subsection{The symmetric octagon}\label{subsec:octagon}
The fiber $F$ is the regular octagon with edge word
$x\,y\,x^{-1}y^{-1}\,r\,s\,r^{-1}s^{-1}$ (edges $E_1,\dots,E_8$, vertices
$V_1,\dots,V_8$, all identified to the single point $p$), so
$\pi_1(F,p)=\langle x,y,r,s\mid[x,y][r,s]\rangle$ with the edge loops as
generators (Figure~\ref{fig:octagon}). Since all eight vertices are
identified to $p$, a based loop transverse to the edges is specified by its
departure corner and its ordered edge crossings, and its based word in
$x,y,r,s$ is read off by developing in the octagon-tiled universal cover;
we call that word the loop's \emph{development} (the computation is
mechanized and validated in Appendix~\ref{app:machine}). Let $\rho$ be the
rotation by $\pi$.
It maps $E_i\to E_{i+4}$,
respects the identifications, and induces an involution $\varphi_0$ of $F$
with two properties used constantly below:
$\varphi_0$ acts on $\pi_1(F,p)$ by the \emph{exact} swap $x\leftrightarrow r$,
$y\leftrightarrow s$ (as a based automorphism, with no conjugation
correction from a basing arc), and
$\mathrm{Fix}(\varphi_0)=\{p, O\}$ ($O$ the center), matching the two fixed
points of the Lidman--Piccirillo involution $\varphi$ and giving their two
sections $\Gamma,\Gamma'$ through $p$ and $O$.

Since $\varphi_0$ fixes $p$ and the twist diffeomorphism $\psi_0$ below is
supported away from $p$, \emph{every monodromy lift in this paper is a genuinely
based automorphism}: the presentation of $\pi_1$ of the bundle requires no
basing correction at all. This eliminates one entire class of based-loop
ambiguity before the computation starts.

The next lemma is the bridge from the surface drawn by Lidman and
Piccirillo to this polygon.  It is stated before any word is read from the
octagon so that the later computation does not silently replace their marked
surface by a different one.

\begin{lemma}[Equivariant normal form]\label{lem:markednormalform}
Let $(F_{\rm LP};a_{\rm LP},b_{\rm LP},c_{\rm LP},d_{\rm LP},e_{\rm LP},
\varphi)$ be the marked genus-$2$ surface of
\cite[Figure~1]{LP25}.  There is an orientation-preserving diffeomorphism
\[
q:F_{\rm LP}\longrightarrow F
\]
which conjugates $\varphi$ to the half-turn $\rho$ and carries the five-chain,
in order, to the solid curves drawn in Figure~\ref{fig:octagon}.  The two
fixed points of $\varphi$ go to $p$ and $O$.  Consequently the polygonal
curves used below are ambient representatives of the Lidman--Piccirillo
curves, not merely curves with the same homology classes.
\end{lemma}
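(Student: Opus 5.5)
The plan is to reduce the statement to the classification of chains and of periodic maps on a genus-$2$ surface. First I identify the quotient data. On $F_{\rm LP}$ the involution $\varphi$ has exactly two fixed points, so the quotient orbifold $F_{\rm LP}/\varphi$ is a closed genus-$1$ surface with two cone points of order $2$. The same holds for $\rho$ on $F$: by Riemann--Hurwitz, $2-2\cdot 2=2(2-2g')-2$ forces $g'=1$. A free involution or one with six fixed points is excluded in both cases, and the half-turn of the octagon fixes exactly $p$ and $O$.

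The main tool is Nielsen's classification of periodic maps. Two orientation-preserving involutions of $\Sigma_2$ are conjugate by an orientation-preserving diffeomorphism when their quotient orbifolds and branching data agree, and the lift of a diffeomorphism of $(T^2,\{2\text{ points}\})$ is determined by the monodromy class in $H^1(\cdot;\Z/2)$. So there is some $q_0$ with $q_0\varphi q_0^{-1}=\rho$.

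Next I have to arrange the curves, not just the involution. The five-chain of \cite[Figure~1]{LP25} is $\varphi$-invariant as a set: $a\leftrightarrow e$, $b\leftrightarrow d$, and $c$ is preserved with orientation. The image $q_0(a_{\rm LP},\dots,e_{\rm LP})$ is therefore a $\rho$-invariant chain with the same swap pattern. The drawn solid curves form another such chain, and I check this by reading the octagon: $c$ passes through $O$ and is rotated onto itself, and $a,e$ and $b,d$ are exchanged by $\rho$. I then want a $\rho$-equivariant diffeomorphism carrying one invariant chain to the other. To get it I pass to the quotient. An invariant chain with this pattern descends to a configuration on $F/\rho$:
\begin{itemize}
\item a pair of arcs or curves (images of $a\cup e$ and of $b\cup d$) meeting once;
\item the image of $c$, an arc between the two cone points, or a curve through them, determined by where $O$ and $p$ lie relative to $c$.
\end{itemize}
The complement of a full chain in $\Sigma_2$ is a disk, so the complement of the quotient configuration is also a disk, or a disk with controlled cone points. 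Any two such filling configurations with the same combinatorics are related by a diffeomorphism of the orbifold (Alexander trick on the complementary disk). Lifting this diffeomorphism, and composing with a deck transformation if needed, gives an equivariant map carrying chain to chain in order. Fixing orientations of the curves is a final adjustment by $\rho$ or by reversing arcs. Set $q$ to be this lift composed with $q_0$. It maps $\mathrm{Fix}(\varphi)$ to $\{p,O\}$ automatically.

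The hard step is matching the chains equivariantly. One has to check that the positions of the two fixed points relative to the chain agree: in both pictures, one fixed point must lie on $c$ and the other must lie at the common complementary vertex, or whatever LP's Figure~1 actually shows. If the octagon instead had $p$ on $c$ where LP has it off, no equivariant $q$ could exist. I would settle this by tracing both pictures explicitly. Failing that, I would verify the lemma a posteriori: compute the based words of the five solid curves and of $\varphi$ in the octagon, and check their intersection pattern and the exact swap $x\leftrightarrow r$, $y\leftrightarrow s$. This data determines the marked surface up to equivariant isotopy, by the same disk-complement argument.
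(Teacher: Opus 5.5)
There is a genuine gap, located at the step you yourself flag as hard, and two of the claims leading up to it are false.

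\emph{The complement is two disks, not one.} The complement of the five-chain in $\Sigma_2$ is not a disk. The chain graph has $4$ vertices and $8$ edges, so a regular neighborhood $N$ has $\chi(N)=-4$. It has genus $2$, since $(a,b)$ and $(d,e)$ give two symplectic pairs. Hence $N$ has two boundary circles, and $F\smallsetminus\Int N$ is two disks. (A four-chain has one complementary disk.) Your Alexander-trick step therefore skips the only genuinely equivariant question: does the involution preserve or exchange the two disks?

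\emph{No fixed point lies on the chain.} Your placement of the fixed points contradicts your own hypothesis. Since $\varphi$ preserves $c$ with orientation and has isolated fixed points, $\varphi|_c$ is a free half-rotation, so $c$ contains no fixed point. The curves $a,b,d,e$ contain none either, being exchanged with disjoint partners, and no intersection vertex is fixed. So $c$ does not pass through $O$; in Figure~\ref{fig:octagon} the two $c$-chords bound the middle region containing $O$. Consequently the image of $c$ in $F/\rho$ is a closed curve double covered by $c$, not an arc between cone points. The configuration you propose to test for (a fixed point on $c$) is impossible. You leave the real check open (``whatever LP's Figure~1 actually shows''), and your a~posteriori fallback rests on the same one-disk argument.

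\emph{The missing argument.} There is an intrinsic argument that requires tracing neither picture. Choose $N$ invariant and disjoint from the fixed points. If the two complementary disks were exchanged, neither could contain a fixed point, so the involution would be free, which is a contradiction. Hence each disk is invariant. Brouwer then gives a fixed point in each disk, and exactly one, since there are only two. This holds identically for $(F_{\rm LP},\varphi)$ and $(F,\rho)$.

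\emph{Completing the map.} An equivariant isomorphism of the oriented ribbon graphs then extends disk by disk. Quotient each invariant disk by its rotation to get a disk with one branch point. Extend the boundary map, sending branch point to branch point and linear in branch charts, and lift. The resulting map itself conjugates $\varphi$ to $\rho$ and sends the fixed points to $p,O$, so your Nielsen-classification step is unnecessary.

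Your orbifold-quotient route can also be repaired. The correct quotient picture is a three-chain $\bar a=\bar e$, $\bar b=\bar d$, $\bar c$ on the torus, whose two complementary disks each contain one cone point. You would then still need to check that the orbifold diffeomorphism respects the branched double-cover class before lifting it.
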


\begin{proof}
The ordered curves $(a_{\rm LP},b_{\rm LP},c_{\rm LP},d_{\rm LP},e_{\rm
LP})$ form a five-chain: consecutive curves meet once and all other pairs
are disjoint. Choose a sufficiently small $\varphi$-invariant regular
neighborhood $N$ of their union, disjoint from the two fixed points of
$\varphi$; such a neighborhood is obtained, for example, from an invariant
metric. It has genus $2$.
Indeed, the two disjoint pairs $(a_{\rm LP},b_{\rm LP})$ and
$(d_{\rm LP},e_{\rm LP})$ give two symplectic pairs in $H_1(F_{\rm LP})$,
so $N$ already has genus at least $2$.  The chain graph has four vertices
and eight edges, hence $\chi(N)=-4$; it follows that $N$ has two boundary
components.  Since $\chi(F_{\rm LP}\smallsetminus\operatorname{int}N)=2$
and those are its only two boundary circles, the complement consists of two
disks.  Thus the five-chain is a filling ribbon graph.

Orient the five curves successively so that the four intersection signs
agree with those of the polygonal chain.  The resulting oriented ribbon
graphs are isomorphic.  The isomorphism thickens to an
orientation-preserving diffeomorphism of their regular neighborhoods and,
because the two complementary components are disks, extends over all of the
surface.  This proves the usual uniqueness of an ordered five-chain on a
genus-$2$ surface without appealing to a picture.

The involution reverses the chain, exchanging $a\leftrightarrow e$ and
$b\leftrightarrow d$ and preserving $c$; the half-turn has the same action
on the polygonal ribbon graph. Choose the graph isomorphism equivariantly
on one curve and its image at each stage. Both complementary disks are
invariant: if they were exchanged, neither could contain a fixed point, while
the chosen neighborhood contains neither fixed point. Brouwer's
fixed-point theorem gives a fixed point in each invariant disk, hence exactly
one in each.  To extend equivariantly, quotient each complementary disk by
its order-two action.  The quotient is a disk with one branch point.  The
equivariant boundary identification descends to a boundary diffeomorphism of
the quotient disks.  Extend it over the disks while sending branch point to
branch point, choosing the extension to be linear in oriented branch charts,
and lift it using the boundary-determined lift.  The linear local form makes
the lift smooth at the fixed point, and the lifted diffeomorphism conjugates
the two disk actions.  Doing this on both components sends the fixed points
to $p,O$ and completes the equivariant marked identification.
\end{proof}

The auxiliary curve $z$ from \cite[Figure~1]{LP25} is deliberately not drawn
in Figure~\ref{fig:octagon}: it is disjoint from $c$, whereas a previously
used symmetric two-chord representative was not.  Only its source-figure
intersection data and the resulting homology class $[z]=[b]-[d]$ are used in
the diagnostic calculations of Part~I; neither $z$ nor a based word for it is
used in the proof of Theorem~\ref{thm:D}.

\begin{figure}[htb]
\centering
\begin{tikzpicture}[scale=1.75]
\foreach \i in {1,...,8} {
  \coordinate (P\i) at ({90+45-45*\i}:1.3);
}
\coordinate (Bx) at ($(P1)!0.65!(P2)$);
\coordinate (Cy) at ($(P2)!0.35!(P3)$);
\coordinate (Ay) at ($(P2)!0.65!(P3)$);
\coordinate (Bxi) at ($(P3)!0.35!(P4)$);
\coordinate (Ayi) at ($(P4)!0.35!(P5)$);
\coordinate (Cyi) at ($(P4)!0.65!(P5)$);
\coordinate (Dr) at ($(P5)!0.65!(P6)$);
\coordinate (Cs) at ($(P6)!0.35!(P7)$);
\coordinate (Es) at ($(P6)!0.65!(P7)$);
\coordinate (Dri) at ($(P7)!0.35!(P8)$);
\coordinate (Esi) at ($(P8)!0.35!(P1)$);
\coordinate (Csi) at ($(P8)!0.65!(P1)$);
\draw (P1)--(P2)--(P3)--(P4)--(P5)--(P6)--(P7)--(P8)--cycle;
\draw[green!45!black,thick] (Ay)--(Ayi) node[pos=.58,right] {\scriptsize $a$};
\draw[orange!85!black,thick] (Bx)--(Bxi) node[pos=.45,above right] {\scriptsize $b$};
\draw[blue!75!black,very thick] (Cy)--(Csi);
\draw[blue!75!black,very thick] (Cs)--(Cyi) node[pos=.55,below] {\scriptsize $c$};
\draw[orange!85!black,thick] (Dr)--(Dri) node[pos=.45,below left] {\scriptsize $d$};
\draw[green!45!black,thick] (Es)--(Esi) node[pos=.58,left] {\scriptsize $e$};
\coordinate (K7) at ($(P7)!0.12!(0,0)$);
\coordinate (K3) at ($(P3)!0.12!(0,0)$);
\draw[red!75!black,densely dotted,thick,->]
  (K7) .. controls (-.45,.12) and (.45,.12) .. (K3)
  node[pos=.56,above] {\scriptsize $\kappa_3$};
\foreach \i in {1,...,8} {
  \fill (P\i) circle (0.035);
}
\node[above right] at ($(P1)!0.5!(P2)$) {\scriptsize $E_1:x$};
\node[right] at ($(P2)!0.5!(P3)$) {\scriptsize $E_2:y$};
\node[below right] at ($(P3)!0.5!(P4)$) {\scriptsize $E_3:x^{-1}$};
\node[below right] at ($(P4)!0.5!(P5)$) {\scriptsize $E_4:y^{-1}$};
\node[below left] at ($(P5)!0.5!(P6)$) {\scriptsize $E_5:r$};
\node[left] at ($(P6)!0.5!(P7)$) {\scriptsize $E_6:s$};
\node[above left] at ($(P7)!0.5!(P8)$) {\scriptsize $E_7:r^{-1}$};
\node[above left] at ($(P8)!0.5!(P1)$) {\scriptsize $E_8:s^{-1}$};
\node[above] at (P1) {\tiny $V_1$};
\node[right] at (P2) {\tiny $V_2$};
\node[right] at (P3) {\tiny $V_3$};
\node[below] at (P4) {\tiny $V_4$};
\node[below] at (P5) {\tiny $V_5$};
\node[left] at (P6) {\tiny $V_6$};
\node[left] at (P7) {\tiny $V_7$};
\node[above] at (P8) {\tiny $V_8$};
\fill (0,0) circle (0.035);
\node[below] at (0,0) {\scriptsize $O$};
\end{tikzpicture}
\caption{The fiber $F$: the regular octagon with edge word
$xyx^{-1}y^{-1}rsr^{-1}s^{-1}$, all eight vertices identified to the single
basepoint $p$, so that each edge is a based loop. The rotation by $\pi$
induces the involution $\varphi_0$ ($x\leftrightarrow r$,
$y\leftrightarrow s$, exactly), with fixed points $p$ and the center $O$.
The solid curves form the Lidman--Piccirillo five-chain
$a,b,c,d,e$.  Identified endpoints on paired
edges close each curve.  The dotted path lies in the middle component of the
octagon cut along the two blue $c$-arcs and represents $\kappa_3$; it is
included here because its disjointness from $c$ is used in
Section~\ref{sec:surjection}.}
\label{fig:octagon}
\end{figure}

\subsection{Curves and lifts}\label{subsec:curves}
The twist curves are the displayed $a$, $b$ (push offs of the $x$- and
$y$-circles, crossing the $y$- resp.\ $x$-circle once near $p$) and their $\rho$-images
$e:=\varphi_0(a)$, $d:=\varphi_0(b)$, so Lidman--Piccirillo's symmetry
($\varphi\colon a\leftrightarrow e$, $b\leftrightarrow d$) is built into the
model by Lemma~\ref{lem:markednormalform}. The twist directions are normalized so that the based actions are
$T_a\colon(x,y)\mapsto(x,yx)$, $T_b\colon(x,y)\mapsto(xy^{-1},y)$; a twist
direction is a binary choice per curve, some choice realizes these standard
once-punctured-torus formulas, and the residual chirality ambiguity is absorbed
by the handle swap (Remark~\ref{rem:chirality}); the composite
$h:=T_a\circ T_b\colon x\mapsto y^{-1},\ y\mapsto yx$ is the trefoil monodromy
of Lemma~\ref{lem:trefoil}, which fixes the boundary word $[x,y]$ exactly and
satisfies $h^6=\mathrm{conj}_{[x,y]^{-1}}$ (the boundary Dehn twist) and
$h^3=-\mathrm{id}$ on $H_1$. Set $\tp:=h*\mathrm{id}$ (well defined on the
closed-fiber group because $h$ fixes $[x,y]$ exactly) and $\tf:=$ the swap; the
identity $\tp\tf\tp^{-1}\tf^{-1}=h*h^{-1}$ of Lemma~\ref{lem:auts}(2) realizes
Lidman--Piccirillo's factorization $abd^{-1}e^{-1}=(ab)\varphi(ab)^{-1}\varphi^{-1}$.
The diffeomorphism $\psi_0:=T_aT_b$ fixes $p$, $O$ and the entire right handle
pointwise; in particular $\psi_0(e)=e$ pointwise.

The invariant curve used below is the two-arc curve fixed by the marked
identification: $c=\gamma\cup\rho(\gamma)$, with $\gamma$ joining the
$y$-edge pair to the $s$-edge pair.  Its embedded isotopy class comes from
Lemma~\ref{lem:markednormalform}; its \emph{based words}, which also depend on
orientation and basing, are derived in Section~\ref{sec:words}.

\subsection{The cut-square model of the bundle}\label{subsec:cutsquare}

\begin{convention}[Words, composition, and the base generators]\label{conv:words}
Paths and loops compose \emph{left to right}: $\gamma_1\gamma_2$ traverses
$\gamma_1$ first. Relators are words equal to $1$; a relation written
$ugu^{-1}=w$ means the corresponding relator $ugu^{-1}w^{-1}$. The base
generators are $A:=[\bar\alpha]^{-1}$ and $B:=[\bar\beta]^{-1}$, so that
conjugation by $B$ implements one \emph{upward} transport around $\bar\beta$:
$BgB^{-1}=\tp(g)\cdot[\mathrm{corr}]$ for a fiber loop $g$, and likewise $A$
for $\bar\alpha$ and $\tf$. (The opposite choices are the $\varepsilon$-flips
of Remark~\ref{rem:conventions}; they are covered by the sign checks of
Remark~\ref{rem:robust}, but
all displayed words in this part use the stated convention.)
\end{convention}

The base $T_0$ (once-punctured torus) is the square $[0,1]^2$ minus a puncture
disk at $(3/4,3/4)$, with $(\xi,(t,1))\sim(\psi_0\xi,(t,0))$ and
$(\xi,(1,u))\sim(\varphi_0\xi,(0,u))$: crossing the $\alpha$-cut upward applies
$\psi_0$; crossing the $\beta$-cut rightward applies $\varphi_0$. Basepoint
$(p,q)$ with $q=(1/4,1/4)$; based base loops $\bar\alpha$ (horizontal through
$q$; holonomy $\tf$) and $\bar\beta$ (vertical; holonomy $\tp$), realizing the
minimal crossing pattern with the embedded curves: $\bar\alpha$ meets the
embedded $\beta$ once and the embedded $\alpha$ not at all, and symmetrically
for $\bar\beta$, minimal because $|\alpha\cdot\beta|=1$ forces one crossing,
while $T_0$ cut along $\alpha$ still contains a boundary-parallel copy of
$\alpha$ basable at $q$ with trivial basing correction. The surgery tori
sit on the cuts:
$T_\alpha=c\times\{\alpha\text{-cut}\}$ (closing because $\varphi_0(c)=c$; the
restriction $\varphi_0|_c$ is a free half-rotation, a framing fact used in
Section~\ref{subsec:directions}) and $T_\beta=e\times\{\beta\text{-cut}\}$
(closing with the product framing because $\psi_0(e)=e$ pointwise).
Figure~\ref{fig:cutsquare} shows the base square and the domain of a
transport annulus.

\begin{figure}[htb]
\centering
\begin{tikzpicture}[scale=1.05]
\draw[line width=1.6pt, red!70!black] (0,0) -- (4,0);
\draw[line width=1.6pt, red!70!black] (0,4) -- (4,4);
\draw[line width=1.6pt, blue!70!black] (0,0) -- (0,4);
\draw[line width=1.6pt, blue!70!black] (4,0) -- (4,4);
\node[red!70!black, below] at (2,-0.05) {\scriptsize $\alpha$-cut ($u_2{=}0{\equiv}1$): up $\Rightarrow\psi_0$;\ \ $T_\alpha=c\times\{\text{cut}\}$};
\node[blue!70!black, rotate=90, above] at (-0.15,2) {\scriptsize $\beta$-cut: right $\Rightarrow\varphi_0$;\ $T_\beta=e\times\{\text{cut}\}$};
\draw[dashed] (3,3) circle (0.3);
\node at (3,3) {\scriptsize $O$};
\draw[->, thick, green!50!black] (1,1) -- (1,2.6);
\draw[thick, green!50!black] (1,2.6) -- (1,4);
\draw[thick, green!50!black] (1,0) -- (1,1);
\node[green!50!black, left] at (1,2.6) {\scriptsize $\bar\beta$};
\draw[->, thick, violet] (1,1) -- (2.6,1);
\draw[thick, violet] (2.6,1) -- (4,1);
\draw[thick, violet] (0,1) -- (1,1);
\node[violet, below] at (2.6,1) {\scriptsize $\bar\alpha$};
\fill (1,1) circle (0.06); \node[below left] at (1,1) {\scriptsize $q$};
\fill[red!70!black] (1,0) circle (0.07); \fill[red!70!black] (1,4) circle (0.07);
\node[red!70!black, above right] at (1,3.98) {\scriptsize $\bar\beta\cap\alpha$-cut (the wrap)};
\fill[blue!70!black] (0,1) circle (0.07); \fill[blue!70!black] (4,1) circle (0.07);
\node[blue!70!black, right] at (4.05,1) {\scriptsize $\bar\alpha\cap\beta$-cut};
\begin{scope}[xshift=6.6cm]
\draw (0,0) rectangle (4,4);
\node[below] at (2,0) {\scriptsize bottom: the fiber loop $g$};
\node[above] at (2,4) {\scriptsize top: $\tp(g)$};
\node[rotate=90, above] at (-0.15,2) {\scriptsize left: $\bar\beta$ (track of $p$)};
\node[rotate=90, below] at (4.15,2) {\scriptsize right: $\bar\beta$};
\draw[dashed, red!70!black] (0,3) -- (4,3);
\node[red!70!black, below] at (2.6,2.95) {\scriptsize the $\alpha$-cut crossing ($v=3/4$)};
\draw[dashed, ->, thick] (0.05,0.08) -- (1.6,0.08) -- (1.6,2.85);
\node[right] at (1.62,1.5) {\scriptsize conj.\ path $\ell$};
\draw (1.6,3) circle (0.12);
\fill (1.6,3) circle (0.03);
\node[above] at (1.6,3.14) {\scriptsize $z_0=(g\cap c,\ \text{wrap})$};
\node[below left] at (0.35,-0.02) {\scriptsize $u$};
\draw[->] (0.05,-0.55) -- (0.7,-0.55);
\node[left] at (-0.28,0.45) {\scriptsize $v$};
\draw[->] (-0.45,0.08) -- (-0.45,0.75);
\end{scope}
\end{tikzpicture}
\caption{Left: the cut square, its basepoint $q=(1/4,1/4)$, the
based loops $\bar\alpha,\bar\beta$, the puncture $O$, and the base circles
carrying the surgery tori. All $T_\alpha$-corrections happen over the single
marked point $\bar\beta\cap\alpha$-cut, all $T_\beta$-corrections over
$\bar\alpha\cap\beta$-cut. Right: the domain of the transport annulus
$\mathfrak{M}_g$ for $g$ transported around $\bar\beta$; it meets
$T_\alpha$ only on the dashed line (the $\alpha$-cut crossing), and only at
fiber points of $g\cap c$: for $g=y$ the single puncture $z_0$, whose
conjugating path $\ell$ is shown. (Every crossing count used in the
derivation is also verified mechanically; Appendix~\ref{app:machine}.)}
\label{fig:cutsquare}
\end{figure}

We will show that of the eight monodromy relations
(four fiber generators over each of the two base loops), \emph{exactly three
acquire meridian corrections}: $s$ over $\bar\alpha$, and $y$ and $s$
over
$\bar\beta$, each from a single transverse crossing; the other five stay
clean. (Three, not the four of the embedded count of
Section~\ref{subsec:E1}: Remark~\ref{rem:counts} reconciles the two.) The
mechanism, one relation at a time:

Each monodromy relation of the bundle is realized by an annulus, which we
now make explicit, because its intersections with the surgery tori are the
whole story. For a fiber generator $g$ transported around $\bar\beta$,
consider the map of a square
\[
\mathfrak{M}_g(u,v)=\big(\text{transport of }g(u)\text{ along }\bar\beta\text{
to time }v\big):
\]
its bottom edge is $g$, its top edge is the transported loop $\tp(g)$, and
both vertical edges are $\bar\beta$ itself (the track of the basepoint, since
the monodromies fix $p$); as the two vertical edges coincide, the image is an
annulus in the manifold, the \emph{transport annulus} of $g$. In the
unsurgered bundle $R$ this annulus \emph{is}
the homotopy behind the monodromy relation $BgB^{-1}=\tp(g)$. In the surgered
manifold the annulus may meet the surgery tori, and each transverse
intersection point punctures it: the relation then holds only up to one
conjugated-meridian factor per puncture,
$BgB^{-1}=(\ell\,\mu^{\pm1}\ell^{-1})\cdots\tp(g)$, where $\ell$, the
\emph{conjugating path} of the correction, runs in the annulus from the
base corner to the puncture
(Section~\ref{subsec:corrections} turns each such factor into an explicit
word). Because each surgery torus is a product (fiber curve) $\times$ (base
cut circle), the punctures are exactly the points (base crossing of the loop
with the cut) $\times$ (fiber crossing of $g$ with the torus' fiber curve): a finite count,
read off the intersection data of
Section~\ref{subsec:positions}.

\begin{remark}[A worked example: the corrected relation for $y$]\label{rem:workedmembrane}
Take $g=y$ over $\bar\beta$, the simplest corrected relation. The base track
of the transport annulus is the vertical circle through $q=(1/4,1/4)$: traveling
upward from $q$, it crosses the $\alpha$-cut exactly once, at the point
$(1/4,\,1{\equiv}0)$ where the vertical circle wraps around (\emph{the
wrap}, for short), and never meets the $\beta$-cut (its column stays at
$u_1=1/4$). So this annulus can meet only
$T_\alpha=c\times\{\alpha\text{-cut}\}$, and it does so exactly where its
fiber point lies on $c$: the loop $y$ crosses $c$ once, at $c_y$
(Section~\ref{subsec:positions}), so the annulus meets $T_\alpha$ in the
single transverse point
$(c_y,\,(1/4,1{\equiv}0))$. Puncturing there, the clean relation
$ByB^{-1}=\tp(y)=yx$ acquires exactly one conjugated-meridian factor, whose
conjugating path runs in the annulus to the puncture: the partial loop $y_1$
(from $p$ to $c_y$) followed by the vertical transport up to the cut.
Defining the
based meridian $M$ \emph{along that very arc}
($M:=y_1\cdot(\text{meridian circle at }c_y)\cdot y_1^{-1}$,
Section~\ref{subsec:corrections}) absorbs the conjugating path entirely,
and the
corrected relation is $ByB^{-1}=M^{\pm1}(yx)$. Every other transport annulus
is this
computation with different intersection data. Over $\bar\beta$: the loop $s$
crosses $c$ once, at the point $c_s$ (Section~\ref{subsec:positions}),
giving $BsB^{-1}=(\delta M^{\pm1}\delta^{-1})s$
with the explicit conjugating path $\delta$ (the meridian is based at $c_y$,
so the
basing must slide along $c$ from $c_s$: that slide is $\delta$); the
loops $x$ and $r$ miss $c$ entirely, so their relations stay clean. Over
$\bar\alpha$: the base track (the horizontal circle through $q$) crosses the
$\beta$-cut once and never meets the $\alpha$-cut, so only
$T_\beta=e\times\{\beta\text{-cut}\}$ is in play; the loop $s$ crosses $e$
once at $s_e$, giving $AsA^{-1}=N^{\pm1}y$ (with $N$ based along the
arc $s_2$, again absorbing its own conjugating path), while $x$, $y$, $r$
miss $e$,
so their relations stay clean. That is the full count: three corrected
relations, five clean ones, each correction at a single crossing point. The
same point-versus-circle discipline governs the \emph{push off} basings:
there the object transported around $\bar\beta$ is a basing arc rather than
a generator, and the square it sweeps is subject to the same crossing count
(Section~\ref{subsec:pushoffbasing}).
\end{remark}

\section{The derived words}\label{sec:words}

\subsection{The invariant curves}\label{subsec:invcurves}
The curve $c$ (it crosses $E_8$, then $E_4$) has based word
$(rx)^{-1}$ when based at the $V_2$ corner; developing $c$ at the
$V_1,V_2,V_3$ and $V_5$ basings (Appendix~\ref{app:machine}) realizes the
$xr$/$rx$ ordering
ambiguity as a pure choice of basing. This based-word orientation is used for
the exponent $n$ in Part~II; it is opposite to the homology orientation
$[c]=[x]+[r]$ used in Part~I. Since $n$ ranges over all integers, this
orientation change does not alter the family. The auxiliary source curve $z$ is not
part of this polygonal word calculation and is not needed below; its corrected
homology class is recorded in Section~\ref{subsec:bundle}.

\subsection{Intersection data}\label{subsec:positions}
Disjointness pins the normal positions: on the $s$-edge, the $c$-crossing
precedes the $e$-crossing (else $c\cap e\neq\varnothing$); on the $y$-edge only
the $c$-crossing is relevant. We name the three points that recur: $c_y$ and
$c_s$ are the single crossings of $c$ with the $y$- and $s$-edges, and $s_e$
is the single crossing of $e$ with the $s$-edge; on the $s$-edge, $c_s$
comes before $s_e$. Realizability of the five-chain in the frozen polygonal
coordinates is checked mechanically (Appendix~\ref{app:machine}): $c$ meets the strip of $d$
(the band between a push-off curve and its edge circle) on
leaving $E_6$ and the strip of $b$
on leaving $E_2$; all five curves
avoid $p$ and $O$.

\subsection{Meridians, conjugating paths, and the corrected
relations}\label{subsec:corrections}
Let $y_1$ and $s_1$ be the initial segments of the based loops $y$ and $s$ up
to their $c$-crossings, and $s_2$ the initial segment of $s$ up to its
$e$-crossing (on the $s$-edge the $c$-crossing comes first, by the
intersection data). Base the meridians at the surgery-relevant crossings:
$M:=y_1\cdot(\text{meridian circle of }T_\alpha)\cdot y_1^{-1}$,
$N:=s_2\cdot(\text{meridian circle of }T_\beta)\cdot s_2^{-1}$. Then the
transport-annulus analysis gives the three corrected relations in the form
\[
ByB^{-1}=M^{\pm}\,(yx),\qquad
BsB^{-1}=\big(\delta M^{\varepsilon}\delta^{-1}\big)\,s,\qquad
AsA^{-1}=N^{\pm}\,y,
\]
i.e.\ two of the three corrections are \emph{exactly} meridian powers (the
basing absorbs the conjugating path), and the third has conjugating path
$\delta=s_1\cdot(\text{arc of }c)\cdot y_1^{-1}$, computed mechanically
(Appendix~\ref{app:machine}):
$\delta=r^{-1}$ (via one arc of $c$) or $\delta'=x$ (via the other), with
the arc-difference identity $\delta'\delta^{-1}=xr\sim c$ (validation V4,
Appendix~\ref{app:machine}). Both values of every sign are checked
($\varepsilon\in\{\pm1\}$ denotes the sign of the $Bs$-correction, which
reappears in Section~\ref{subsec:pushoffbasing}); the
left placement of the corrections is tied to the stated annulus
orientation; both placements are included in the finite computation of
Remark~\ref{rem:robust}. The crossing
$z_0=(c_s,\text{the wrap of }\bar\beta)$ that produces the
$Bs$-correction
corrects one more word: the base-direction push off of $T_\beta$
(Section~\ref{subsec:pushoffbasing}).

\subsection{The direction (Lagrangian-framing) words}\label{subsec:directions}
For a loop on a surgery torus traveling once in the base direction and
crossing
one cut with holonomy $\Theta$ at fiber position $\eta$, the based push off
has the form
\[
(\text{cut generator})\cdot\tilde\Theta(\gamma_\eta)\cdot
(\text{drift})\cdot\gamma_\eta^{-1},
\]
where $\gamma_\eta$ is a fiber path from $p$ to $\eta$ and the \emph{drift}
is the fiber path along which the holonomy carries the closing curve:
trivial for $T_\beta$, where $\psi_0$ fixes $e$ pointwise, and half of $c$
for $T_\alpha$, as derived below. This yields
\begin{align*}
\mathrm{dir}_{T_\beta}^{\mathrm{base}} &= \big(\delta M^{-\varepsilon}\delta^{-1}\big)\,B,
&\mathrm{dir}_{T_\beta}^{\mathrm{fib}} &= s\,r^{-1}s^{-1},\\
\mathrm{dir}_{T_\alpha}^{\mathrm{base}} &= A\,x,
&\mathrm{dir}_{T_\alpha}^{\mathrm{fib}} &= (rx)^{-1}.
\end{align*}
These formulas are obtained as follows. On $T_\beta$ the drift is zero, and
the meridian conjugate in
the base direction is the push-off basing correction derived in
Section~\ref{subsec:pushoffbasing}; the fiber direction is $e$ based along
$s_2$; writing $e_{V_7}$ for the development of $e$ departing from the
wedge $V_7$ (Appendix~\ref{app:machine}), one has
$s\,r^{-1}s^{-1}=s\cdot e_{V_7}\cdot s^{-1}$, as it must be. On $T_\alpha$,
$\varphi_0(c_y)=c_s$ says that a base-direction section closes only after
traversing half of $c$. The two half-arc words developed in
Section~\ref{subsec:corrections} are $r^{-1}$ and $x$. They play different
roles in the chosen parametrization. We choose the section with closing arc
$x$ and base its push off by the same $y_1$ whisker used for $M$; this is the
$y_1$-side parametrization specified in the introduction. The word $r^{-1}$
remains the transport conjugator in the $Bs$ correction and must not be
substituted for the closing arc. Appendix~\ref{subsec:seifert} records a
side-resolved finite-fingerprint check of this distinction; the section
itself is fixed here by explicit choice.

We therefore take $\lambda=Ax$ as the reference base-direction section. The
other minimal representative satisfies
\[
Ar^{-1}=Ax\,(rx)^{-1};
\]
it is the adjacent $n=1$ member, not the reference $n=0$ member. The exact
identity
\[
\lambda^2=(Ax)^2=rA^2x
\]
follows from $AxA^{-1}=r$; on exponent sums it gives
$2\lambda=2[A]+[c]$ in the Part~I homology orientation, since
$[c]=[x]+[r]$. This provides an internal
consistency check for the chosen word. The $(m,n)$-family directions are
$\mathrm{dir}^{\mathrm{base}}\cdot(\mathrm{dir}^{\mathrm{fib}})^{m\text{ or }n}$;
the two arc conventions differ by exactly one unit of $n$, hence are
absorbed by the $n$-range of the family.

\subsection{The push-off basing correction}\label{subsec:pushoffbasing}

The push-off formula above is a bundle identity; realizing it in the
\emph{complement} of the surgery
tori transports the basing arc $s_2$ once around $\bar\beta$, which
sweeps the square
\[
\mathfrak{A}(u,v)=\big(\text{transport of }s_2(u)\text{ along }\bar\beta\text{ to
time }v\big),
\]
a square whose sides are $s_2$, the push off's base circle at fiber $s_e$,
$\tp(s_2)=s_2$ (pointwise: $\psi_0$ fixes the right handle), and $\bar\beta$.

The square misses $T_\beta$: its base column never meets the $\beta$-cut,
and its interior fiber points $s_2(u)$, $u<u_e$ ($u_e$ the endpoint
parameter of $s_2$), miss $e$ (intersection
data:
one $e$-crossing on the $s$-edge, at the endpoint of $s_2$). It crosses
$T_\alpha$ at exactly one interior point $z_0=(c_s,\text{the wrap of
}\bar\beta)$: the base circle crosses the $\alpha$-cut once, and $s_2$
crosses $c$ exactly once, at $c_s$; the intersection data's
own ordering
(the $c$-crossing precedes the $e$-crossing on the $s$-edge) places that
crossing \emph{inside} $s_2$. The local intersection number is $\pm1$, and
the crossing count is homological: the single crossing is interior, so
$\partial\mathfrak{A}$ misses $T_\alpha$ and $\mathfrak{A}$ defines a class
in $H_2\big(R,\,R\smallsetminus\nu(T_\alpha)\big)\cong\Z$, whose pairing
with the Thom class of $\nu(T_\alpha)$ counts transverse crossings with
sign and depends on $\mathfrak{A}$ only through its homotopy class rel
boundary. No interior homotopy avoids the crossing (the disjointness
$c\cap e=\varnothing$ protects only the constant-fiber slide of the push
off circle, not the transported basing arc; see Remark~\ref{rem:history}).

Puncturing the swept disk at $z_0$ and reducing by the \emph{same} steps
that produced the $Bs$-correction (the crossing point, the conjugating
path and the arc route are literally those of
Section~\ref{subsec:corrections}, because $\mathfrak{A}$ is the restriction
of the transport annulus of $s$ to $u\le u_e$) gives, with
$\varepsilon$ the sign of the $Bs$-correction in the same convention,
\[
\mathrm{dir}_{T_\beta}^{\mathrm{base}}
=\big(\delta M^{-\varepsilon}\delta^{-1}\big)\,B,\qquad \delta=r^{-1}:
\]
the $B$-conjugation in the transport identity cancels exactly (so the
insertion is on the left, with the plain conjugating path $\delta$), and
the meridian
sign is \emph{anti-coupled} to the $Bs$-correction sign. Both are relative
statements, hence covariant under the global orientation flip that the sign
checks of Remark~\ref{rem:robust} already cover; no new convention is
introduced. The correction
abelianizes to zero, so the homology bookkeeping of
Lemma~\ref{lem:complement} is unchanged; and the arc choice is immaterial
in the group, since $\delta'M\delta'^{-1}$ and $\delta M\delta^{-1}$ differ
by conjugating $M$ by a based copy of $c$ (the V4 identity), a push off
class on the boundary $3$-torus $\partial\nu(T_\alpha)$, which commutes with
the meridian there.

\begin{remark}[Correction of the push-off word]\label{rem:history}
An earlier version used an incorrect word here, asserting that the swept
square misses $T_\alpha$
because $c\cap e=\varnothing$, a disjointness that protects the
constant-fiber slide of the push off circle but not the transported basing
arc. Testing a circle's transport by the crossings of a single point is
the error detected by the $T^4$ calibration (Appendix~\ref{subsec:t4}). After
correcting the sign-coupled word, the sign assignments, placement choices,
second diagram, and Knuth--Bendix computations were recomputed. All nine
formal exponent systems define trivial relation groups in every convention
(Section~\ref{sec:results});
Theorem~\ref{thm:D} uses only $(0,0)$. Before the derivation was
pinned down, all eight candidate resolutions of the correction (both arcs,
both signs, both placements) were checked at the surgery variant
$(0,0)$ ($256$ systems, each decided by enumeration or rewriting).  Thus
simple connectivity at the specified variant does not depend on resolving
that local sign-and-placement choice, although the displayed relation still
records the geometric derivation.
\end{remark}

\subsection{Two independent calibrations}\label{subsec:calibrations}

The cut-model conventions, transport-annulus crossing counts, meridian
basings, conjugating paths, surgery relations, and group computations were
compared with two configurations whose fundamental groups are known by other
methods. Details are given in Appendix~\ref{app:calibrations}.

First, consider the disjoint Lagrangian tori $T_1=X\times A_1$,
$T_2=Y\times A_2$ in $T^4$ studied by Baldridge--Kirk~\cite{BK09}, whose
complement presentation they
computed with explicit care about basings: double $\pm1$ Luttinger surgery
gives $\pi_1=(\Z^2\rtimes_A\Z)\times\Z$ with
$|\operatorname{tr}A|\in\{1,3\}$ according to the relative signs,
non-abelian in every convention. The transport-annulus derivation reproduces
the published relator pattern and, for every sign and placement convention,
matches the expected abelianization, non-abelian finite quotients, and
low-index-subgroup fingerprints through index $5$. The same invariants
distinguish the alternative basing words considered in
Appendix~\ref{subsec:t4}. This
comparison identified the incorrect push-off word described in
Remark~\ref{rem:history}.

Second, a Seifert-fibered configuration tests the monodromy data absent from
the $T^4$ example: the closed manifold
$N\times S^1$, $N$ the mapping torus of $\varphi_0$, surgered along the
base-direction push-off curve $\lambda$ times the extra circle. There the
surgeries are torus twists, the resulting graph-manifold groups are
classified, and the derived words (the conjugating path $\delta=r^{-1}$,
the anti-coupled meridian sign, the push-off label) match the abelianization
and low-index-subgroup fingerprints, through index $6$, of the independently
presented groups at three surgery coefficients. Seven alternative word
families fail this comparison at every coefficient. Thus the two comparisons cover
the meridian basings, corrections, transport laws, and base-direction push
offs used in the derivation.

\subsection{The framing lemma}\label{subsec:framing}

Luttinger surgery is performed with respect to the \emph{Lagrangian framing}: in
a Weinstein chart (a symplectomorphism of $\nu(T)$ with a neighborhood of the
zero section of $T^*T^2$, canonical coordinates $(\Theta_1,\Theta_2;P_1,P_2)$,
$\omega=dP_1\wedge d\Theta_1+dP_2\wedge d\Theta_2$, $T=\{P=0\}$) the framing
push off of a curve on $T$ is its lift at \emph{constant momentum}. Two framings
of $T$ differ by adding meridian multiples to the push-off classes, and a
meridian error in a direction word changes the surgery coefficient. The
required framing identification is therefore supplied by the following
lemma.

\begin{lemma}[Framing]\label{lem:framing}
Fix the Thurston-type symplectic form on $R$ built from a fiber area form
invariant under both monodromies, chosen in the normalized local models below.
This is precisely the class of forms used in \cite[Section 1]{LP25}: their
form on $R$ is the one induced by ``an area form on the fiber preserved by
both monodromies'', with no further constraint, so the normalized choice
below is one of theirs. Then for both
$T_\beta$ and $T_\alpha$ the fibered push offs used in
Section~\ref{subsec:directions} (the in-fiber parallel copy of the fiber
curve, and the base-direction shift of the closed base-direction curve) are
constant-momentum lifts in an explicit Weinstein chart. In particular their
classes on $\partial\nu(T)$ have zero meridian component: the fibered framing
\emph{is} the Lagrangian framing.
\end{lemma}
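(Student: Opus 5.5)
The plan is to build, for each torus, an explicit local model of $R$ near $T$ in which the Thurston form is visibly a product. Then I would write down a Weinstein chart and check that the two fibered push offs are constant-momentum lifts.

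\emph{Local model.} Near the base circle carrying the torus, $R$ is a mapping torus of the fiber over an annulus. For $T_\beta$ this is $F\times(\text{annulus around the }\beta\text{-cut})$, glued by $\varphi_0$ across the cut. For $T_\alpha$ it is the analogous neighborhood of the $\alpha$-cut, glued by $\psi_0$. Here is where the normalization freedom in the lemma enters. Since $\psi_0=T_aT_b$ is supported away from $e$ and $\varphi_0$ is the half-turn, I would choose the invariant fiber area form $\omega_F$ so that, near $e$, it is the standard form $dt\wedge d\theta$ on an annulus $e\times(-\eta,\eta)$ with $\psi_0$ the identity there. Near $c$ I would choose $\omega_F=dt\wedge d\theta$ on an annulus $c\times(-\eta,\eta)$ on which $\varphi_0$ acts as $(\theta,t)\mapsto(\theta+\pi,t)$, the free half-rotation of $c$ by rotation of the collar. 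Such an invariant form exists: average any area form that is standard on these collars over the finite group generated by the half-turn, then adjust away from the collars by a Moser argument so that $\psi_0$ preserves it. In the paper's terms this is simply the choice of a form "preserved by both monodromies'', so it lies in the class of \cite{LP25}.

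The Thurston form is $\omega=\omega_F+K\,\pi^*\omega_{\mathrm{base}}$, with the connection flat along the cut annulus. Because both monodromies preserve the collar coordinates, the local model near the base circle is a product. I would take base coordinates $(\Theta_2,P_2)$, with $\Theta_2$ running along the base circle and $P_2$ transverse to it, normalized so that $K\pi^*\omega_{\mathrm{base}}=dP_2\wedge d\Theta_2$.

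\emph{The Weinstein chart.} For $T_\beta$ set $\Theta_1=\theta$ and $P_1=t$ on the $e$-collar. Then $\omega=dP_1\wedge d\Theta_1+dP_2\wedge d\Theta_2$ holds exactly, and $T_\beta=\{P=0\}$. The gluing across the cut is the identity in these coordinates, so this is a genuine Weinstein chart. In it, the in-fiber parallel copy of $e$ is $\{P_1=\text{const}\ne0,\ P_2=0\}$. The base-direction curve at fixed fiber point has constant $P_1$ and constant $P_2$; it closes because $\psi_0$ fixes $e$ pointwise, which is the zero-drift statement of Section~\ref{subsec:directions}. Both curves are therefore constant-momentum lifts.

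For $T_\alpha$ the gluing is $(\Theta_1,P_1,\Theta_2)\mapsto(\Theta_1+\pi,P_1,\Theta_2+1)$. I would pass to the coordinate $\Theta_1'=\Theta_1-\pi\Theta_2$, which turns the chart into a product $T^2\times D^2$. The form is preserved, since $dP_1\wedge d\Theta_1'=dP_1\wedge d\Theta_1-\pi\,dP_1\wedge d\Theta_2$, and the extra term is exact and can be removed by the shift $P_2\mapsto P_2-\pi P_1$. This shear is linear symplectic, so $\{P=\text{const}\}$ is still a family of Lagrangian tori. The base-direction closed curve drifts by half of $c$ and is the line $\Theta_1'=\text{const}$. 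The in-fiber copy of $c$ is the line $\Theta_2=\text{const}$. Both lie at constant $(P_1,P_2)$, so both are constant-momentum lifts.

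\emph{Conclusion.} Constant-momentum lifts are by definition the Lagrangian framing, so the meridian components vanish.

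The main obstacle is the $T_\alpha$ step: checking that the shear needed to undo the half-rotation gluing does not mix $P$ into $\Theta$. If it did, push offs at constant $P$ in the old coordinates would acquire a meridian twist. I would verify this by writing the shear explicitly as the linear symplectic map above and observing that it sends constant-$P$ tori to constant-$P$ tori.
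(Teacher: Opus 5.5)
Your Weinstein charts are the paper's. For $T_\beta$ you use the product chart. For $T_\alpha$ you use the shear $\Theta_1'=\Theta_1-\pi\Theta_2$ with compensating momentum $P_2+\pi P_1$; this is the paper's $\Theta_1=\theta_1-\theta_2/2$, $P_2=t/2+Ku$, with $\theta_2$ of period $2\pi$. In these charts both fibered push offs lie on tori $\{P=\text{const}\}$, and that part is correct. Note that the new momentum conjugate to $\Theta_2$ is $P_2+\pi P_1$.

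Your setup paragraph states the monodromies the wrong way round. What determines each local model is the holonomy \emph{along} the base circle carrying the torus. For $T_\beta$ this is $\psi_0$, which is the identity on $A_e$; for $T_\alpha$ it is $\varphi_0$, the half-rotation of $A_c$. The sentence ``both monodromies preserve the collar coordinates'' is false: $\varphi_0$ carries the $e$-collar to the $a$-collar, and $\psi_0$ moves $c$ because $b\cdot c=1$. Your charts actually use the correct holonomies, so this is a slip, but it should be stated correctly.

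The step that fails is your construction of the normalized invariant form.
\begin{itemize}
\item A Moser argument does not alter a form so that a prescribed diffeomorphism preserves it. It produces a diffeomorphism, so at best it replaces $\psi_0$ by an isotopic area-preserving map. That changes the model, and the new map need not remain the identity near $e$.
\item You also cannot adjust ``away from the collars.'' The support of $T_b$ meets the $c$-collar, since $b\cap c\neq\varnothing$.
\item Any non-equivariant adjustment made after averaging destroys $\varphi_0$-invariance.
\item Averaging keeps the form standard near $e$ only if it was already standard near $a=\varphi_0^{-1}(e)$ in the transported coordinates.
\end{itemize}

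The paper never constructs a form. It starts from an arbitrary fiber area form preserved by both monodromies, which is the Lidman--Piccirillo hypothesis, and normalizes \emph{coordinates} instead.
\begin{itemize}
\item Near $e$ it applies Moser on $A_e$ with a primitive $g\,d\theta_1$ vanishing along $e$. The flow then fixes $e$ pointwise, so $\psi_0$ stays the identity there.
\item Near $c$ it applies Moser on the quotient annulus $A_c/\varphi_0$ and lifts. This automatically keeps $\varphi_0$ the deck half-rotation.
\end{itemize}
This also proves the lemma's clause that the normalized choice is one of theirs: every such form is standard in suitable charts.

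If you prefer your route, it can be repaired as follows. Fix compatible standard coordinates on the twist annuli of $a$ and $b$, on their $\varphi_0$-images, and on a $\varphi_0$-equivariant collar of $c$ obtained from the quotient annulus. Take a form that is standard on all of these, average it over $\{1,\varphi_0\}$, and take $T_a,T_b$ to be the standard twists in those annuli.
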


\begin{proof}
\emph{$T_\beta$.} The twist supports of $\psi_0$ lie in the left handle, so
$\psi_0=\mathrm{id}$ on a neighborhood of $e$; hence the bundle is a genuine
product over a base annulus near the $\beta$-cut. Normalize (Moser, on an
annulus) the invariant fiber area form to $dt\wedge d\theta_1$ on
$A_e\cong S^1\times(-1,1)$ and write the base annulus as $(\theta_2,u)$. The
Thurston-type form on this chart is
$\omega=dt\wedge d\theta_1+K\,du\wedge d\theta_2$, which is canonical for
$(\theta_1,\theta_2;\,t,\,Ku)$, with $T_\beta=\{t=u=0\}$ the zero section. The
fiber push off $\{t=t_0,u=0\}$ and base push off $\{t=0,u=u_0\}$
($t_0,u_0$ small constants)
sit at constant momentum. \emph{$T_\alpha$.} Here the holonomy around the
$\alpha$-cut is $\varphi_0$ with $\varphi_0|_{A_c}$ an area-preserving free
half-rotation; normalize it (equivariant Moser) to
$(\theta_1,t)\mapsto(\theta_1+\pi,t)$ with fiber form $dt\wedge d\theta_1$. The
$\nu$-region is the quotient of $A_c\times[0,2\pi]\times(-1,1)$ by
$(\theta_1,t,2\pi,u)\sim(\theta_1+\pi,t,0,u)$, with
$\omega=dt\wedge d\theta_1+K\,du\wedge d\theta_2$ descending to it. The change
of coordinates
\[
\Theta_1=\theta_1-\tfrac{\theta_2}{2},\qquad \Theta_2=\theta_2,\qquad
P_1=t,\qquad P_2=\tfrac{t}{2}+Ku
\]
is well defined on the quotient (the deck identification sends
$\Theta_1\mapsto\Theta_1+2\pi$) and gives
$\omega=dP_1\wedge d\Theta_1+dP_2\wedge d\Theta_2$ with
$T_\alpha=\{P=0\}$: an explicit Weinstein chart on the mapping torus of the
half-rotation. Our fiber push off $\{t=t_0,u=0\}$ has
$P=(t_0,t_0/2)$; our base push off $\{t=0,u=u_0\}$,
whose base-direction closed curve is precisely the drift curve $\lambda$
($\Theta_1=\mathrm{const}$, i.e.\ once around the base while drifting
half-way
along $c$), has $P=(0,Ku_0)$. Both are constant-momentum lifts. The
Lagrangian framing is independent of the choice of Weinstein chart
\cite{ADK03}, and the based words of Section~\ref{subsec:directions} are the
developments (Appendix~\ref{app:machine}) of exactly these push-off
curves.

It remains to supply the two normalizations invoked.
\emph{Moser on the annulus.} Write the invariant fiber area form near $e$ as
$\Omega=f\,dt\wedge d\theta_1$ on $A_e\cong S^1\times(-1,1)$, $f>0$,
$e=\{t=0\}$, and set $\Omega_0=dt\wedge d\theta_1$. Put
$g(\theta_1,t)=\int_0^t\big(f(\theta_1,\tau)-1\big)\,d\tau$ and
$\zeta=g\,d\theta_1$: then $d\zeta=\Omega-\Omega_0$ and $\zeta$ vanishes
identically along $e$. Each $\omega_s=(1-s)\Omega_0+s\Omega$ is an area form
(a convex combination of positive multiples of $dt\wedge d\theta_1$), so
there is a unique vector field $X_s$ with
$\iota_{X_s}\omega_s=-\zeta$; it vanishes along $e$, so its flow $\phi_s$
exists for $s\in[0,1]$ on a neighborhood of the compact core and fixes $e$
pointwise, and
$\frac{d}{ds}\phi_s^*\omega_s
=\phi_s^*\big(d\,\iota_{X_s}\omega_s+d\zeta\big)=0$. Hence
$\phi_1^*\Omega=\Omega_0$: near $e$ there is a chart in which the invariant
form \emph{is} $dt\wedge d\theta_1$. Only the germ of the chart near the
surgery torus enters the constructions above, so this suffices.
\emph{Equivariant Moser for the half-rotation.} On a collar $A_c$ of $c$ the
involution $\varphi_0|_{A_c}$ is free ($\mathrm{Fix}(\varphi_0)=\{p,O\}$
misses the collar), orientation-preserving, preserves $c$ with its
orientation (hence preserves each side of $c$) and preserves $\Omega$.
The quotient $\bar A=A_c/\varphi_0$ is therefore again an annulus with core
$\bar c=c/\varphi_0$, the quotient map $\pi$ is a connected double cover, and
$\Omega$ descends: $\Omega=\pi^*\bar\Omega$. Apply the previous paragraph
downstairs: a diffeomorphism $\bar\psi$ of a collar of $\bar c$, fixing
$\bar c$ pointwise, with $\bar\psi^*\bar\Omega=dt\wedge d\bar\theta$. Fixing
$\bar c$ pointwise, $\bar\psi$ induces the identity on $\pi_1$, so it lifts
to a diffeomorphism $\psi$ of the corresponding collar in $A_c$; the
conjugate $\psi^{-1}(\varphi_0|)\psi$ is a deck transformation covering the
identity and is not the identity, hence equals $\varphi_0|$: the lift is
equivariant. In the lifted coordinates the double cover of
$\big(S^1_{\bar\theta}\times(-1,1),\,dt\wedge d\bar\theta\big)$ is
$S^1_{\theta_1}\times(-1,1)$ with deck transformation
$(\theta_1,t)\mapsto(\theta_1+\pi,t)$ and pulled-back form
$2\,dt\wedge d\theta_1$; absorbing the factor $2$ into $t$ gives exactly the
normalized model used above.
\end{proof}

\begin{remark}
The chart re-derives the drift from the symplectic side: in the coordinates
above, the closed curves on $T_\alpha$ are generated by the $\Theta_1$-circle
($=c$) and the $\Theta_2$-circle ($=\lambda$); a ``pure base'' curve does not
exist on this torus. Thus the symplectic calculation of the half-drift in
$\mathrm{dir}_{T_\alpha}^{\mathrm{base}}=Ax$ agrees with the topological
calculation of Section~\ref{subsec:directions}. The lemma
is stated for the exhibited normalized form; \cite{LP25}'s construction permits
this choice, all downstream arguments hold for it, and the surgery
coefficient's sign convention remains among the conventions checked in
Remark~\ref{rem:robust}.
\end{remark}

\section{The presentation}\label{sec:presentation}

Generators $x,y,r,s$ (fiber), $A,B$ (base), $M,N$ (meridians). Relations:
\begin{enumerate}
\item[(i)] $[x,y][r,s]$;
\item[(ii)] clean $\tf$-monodromy: $AxA^{-1}=r$, $AyA^{-1}=s$, $ArA^{-1}=x$;
\item[(iii)] clean $\tp$-monodromy: $BxB^{-1}=y^{-1}$, $BrB^{-1}=r$;
\item[(iv)] corrected monodromy, as in Section~\ref{subsec:corrections} (signs as in
(C7));
\item[(v)] fillings: $M\cdot(A x\cdot((rx)^{-1})^{\,n})^{\pm1}$,\quad
$N\cdot\big((\delta M^{-\varepsilon}\delta^{-1})B\cdot(s r^{-1}s^{-1})^{\,m}\big)^{\pm1}$
\ ($\varepsilon$ = the sign of the $Bs$-correction in (iv),
Section~\ref{subsec:pushoffbasing});
\item[(vi)] the drilled-fiber relation $R_3$ of
Section~\ref{sec:surjection}:
$B\,(s^{-1}r^{-1}yx)\,B^{-1}=r^{-1}s^{-1}x$, a relation true by
the geometric derivation there.
\end{enumerate}
The corresponding homology checks are as follows. Dropping (v) gives
$H_1=\Z^2$; fiber-only fillings give $H_1=\Z^2$; one filling gives $H_1=\Z$;
and the full relation system has $H_1=0$ for each tested formal exponent
choice and sign, as required by Lemma~\ref{lem:complement}.

\subsection{The assembled presentation at the Lidman--Piccirillo
surgery}\label{subsec:relatorsheet}

Because the relations above are stated schematically, with (iv) referring back to
Section~\ref{subsec:corrections} and $\delta$ and the signs of (C7) left
as parameters, we write the presentation out in full at the relevant variant,
$(m,n)=(0,0)$. Table~\ref{tab:relators} also provides a relation-by-relation
basis for comparison with an independent derivation; the comparison criteria
are given in Section~\ref{subsec:diff}. Generators: fiber $x,y,r,s$ (octagon edge
loops of $F$, based at $p$; $y$ is \cite{LP25}'s curve class $b$ and $s$ the
class $d$), base $A,B$ (C3), and meridians $M$ of $T_\alpha$ based along $y_1$,
$N$ of $T_\beta$ based along $s_2$ (Section~\ref{subsec:corrections}).

\begin{table}[htb]
\small
\begin{adjustbox}{max width=\textwidth}
\begin{tabular}{@{}l>{$}l<{$}p{6.4cm}@{}}
\toprule
\# & \text{relator} & source of the relation \\
\midrule
R0 & [x,y]\,[r,s] & the genus-2 octagon surface relator
(Section~\ref{subsec:octagon}) \\
A1 & A\,x\,A^{-1}=r & clean $\tf$-monodromy (the swap); lift checked
(Appendix~\ref{app:machine}) \\
A2 & A\,y\,A^{-1}=s & same \\
A3 & A\,r\,A^{-1}=x & same \\
B1 & B\,x\,B^{-1}=y^{-1} & clean $\tp$-monodromy ($h\colon x\mapsto y^{-1}$); checked
(Appendix~\ref{app:machine}) \\
B2 & B\,r\,B^{-1}=r & clean $\tp$-monodromy ($h*\mathrm{id}$ fixes the right
handle pointwise) \\
R3 & B\,(s^{-1}r^{-1}yx)\,B^{-1}=r^{-1}s^{-1}x & the drilled-fiber relation
$B\kappa_3B^{-1}=\tp(\kappa_3)$, $\kappa_3$ the third basis element of
$\pi_1(F\smallsetminus\nu c)$; development record D5,
Appendix~\ref{app:machine};
Section~\ref{sec:surjection} \\
M1 & A\,s\,A^{-1}=N^{\varepsilon_3}\,y & the transport annulus
$s\times\bar\alpha$
crosses $T_\beta$ once, at $(s_e,\ \beta\text{-cut})$: $\bar\alpha$ meets the
$\beta$-cut once and $s\cap e=\{s_e\}$ (intersection data, record D4);
$N$'s $s_2$-basing
absorbs the conjugating path.
Sections~\ref{subsec:cutsquare},~\ref{subsec:corrections} \\
M2 & B\,y\,B^{-1}=M^{\varepsilon_4}\,(yx) & the transport annulus
$y\times\bar\beta$
crosses $T_\alpha$ once, at $(c_y,\ \alpha\text{-cut wrap})$; $M$'s
$y_1$-basing absorbs the conjugating path.
Remark~\ref{rem:workedmembrane} works this case
end to end \\
M3 & B\,s\,B^{-1}=\delta\,M^{\varepsilon}\delta^{-1}\,s,\ \ \delta=r^{-1} &
the transport annulus $s\times\bar\beta$ crosses $T_\alpha$ once, at
$z_0=(c_s,\text{wrap})$; the conjugating path $\delta$ is the slide of
the meridian
basing along $c$ from $c_s$ to $c_y$ (development record D3; V4 arc
identity).
Section~\ref{subsec:corrections} \\
F1 & M\,(A\,x)^{\varepsilon_A}=1 & $T_\alpha$ filling, direction
$\alpha'c^{\,n}$ at $n=0$:
$\mathrm{dir}_{T_\alpha}^{\mathrm{base}}=Ax$, the $y_1$-side half-rotation
section; framing Lemma~\ref{lem:framing}; exact drift identity
$\lambda^2=rA^2x$ and $2\lambda=2[A]+[c]$.
Section~\ref{subsec:directions} \\
F2 & N\,\big((r^{-1}M^{-\varepsilon}r)\,B\big)^{\varepsilon_B}=1 &
$T_\beta$ filling, direction $\beta'e^{\,m}$ at $m=0$:
$\mathrm{dir}_{T_\beta}^{\mathrm{base}}=(\delta M^{-\varepsilon}\delta^{-1})B$:
the push-off basing correction, meridian sign anti-coupled to $\varepsilon$;
identified by the $T^4$ calibration. Section~\ref{subsec:pushoffbasing};
Appendix~\ref{subsec:t4} \\
\bottomrule
\end{tabular}
\end{adjustbox}
\caption{The relation system for $V$ at the Lidman--Piccirillo
variant $(m,n)=(0,0)$, one line per relation. Signs are as in (C7); the conclusion
holds for every assignment.}
\label{tab:relators}
\end{table}

Two words enter only away from $(0,0)$, as the $m$-th and $n$-th powers
appended in (v): $\mathrm{dir}_{T_\alpha}^{\mathrm{fib}}=(rx)^{-1}$ (the derived
word of $c$) and $\mathrm{dir}_{T_\beta}^{\mathrm{fib}}=s\,r^{-1}s^{-1}$ (the
$s$-based word of $e$).

For this variant, $H_1=0$ in all conventions, coset enumeration gives
$|G|=1$ in all $32$ sign assignments of the relation system, and
Knuth--Bendix reduces every generator to $1$ in all conventions and in both
based diagrams (Section~\ref{sec:results}). Dropping F1 and F2 gives
$H_1=\Z^2$; dropping one of them gives $H_1=\Z$.

\section{A surjective relation system}\label{sec:surjection}

For the simple-connectivity theorem, it suffices to construct a group defined
by geometrically valid relations that surjects onto the fundamental group;
an exact presentation is not required. One additional clean relation makes
the resulting group computation tractable. We derive it from an explicit
drilled-fiber basis.

\begin{lemma}[Drilled-fiber bases]\label{lem:drilledbases}
With the curves and basings of Figure~\ref{fig:octagon},
\[
\pi_1(F\smallsetminus\nu e,p)=F_3\langle x,y,r\rangle,
\qquad
\pi_1(F\smallsetminus\nu c,p)=F_3\langle x,r,\kappa_3\rangle,
\]
where $\kappa_3$ is the dotted middle-region loop in the figure and
\[
\kappa_3=s^{-1}r^{-1}yx\quad\text{in }\pi_1(F,p).
\]
In particular $\kappa_3$ has a representative geometrically disjoint from
$c$.
\end{lemma}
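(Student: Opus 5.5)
The plan is to compute both drilled-fiber groups directly from the octagon, treating each drilled surface as a once-punctured genus-$1$ surface and then naming explicit free bases.

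The general fact is this. If $\gamma$ is a non-separating simple closed curve on the closed genus-$2$ surface $F$, then $F\smallsetminus\nu\gamma$ is a compact surface of genus $1$ with two boundary circles. Its Euler characteristic is $-2$, so its fundamental group is free of rank $3$. Both $e$ and $c$ are non-separating, since each meets a chain neighbour exactly once. The rank count is therefore automatic, and the work is to show that the named loops form free bases.

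For $e$ I would use the handle splitting of Section~\ref{subsec:bundle}. The curve $e$ is a push-off of the $r$-circle and crosses the $s$-edge once, at $s_e$. The edge loops $x$, $y$ and $r$ are disjoint from $e$: $r$ is parallel to $e$ and misses it, while $x$ and $y$ lie in the left handle. Cutting the octagon along $e$ leaves a polygon in which the $s$-edge pair is cut open. Collapsing the resulting spine onto the wedge of the $x$, $y$ and $r$ edge circles at $p$ should give a deformation retraction. That yields $F_3\langle x,y,r\rangle$, with the surface relator disappearing because $s$ is no longer a closed generator.

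For $c$ I would cut the octagon along the two arcs $\gamma$ and $\rho(\gamma)$, which run from $E_2$ to $E_8$ and from $E_6$ to $E_4$. This gives three pieces: two corner regions and a middle region. The edges $E_1$ ($x$) and $E_5$ ($r$) lie entirely in the corner regions and avoid $c$, so $x$ and $r$ survive as loops in the complement.

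The middle region contains the vertices $V_3$ and $V_7$. The dotted arc joining them therefore closes up to a loop $\kappa_3$ at $p$ that is disjoint from $c$, which gives the last sentence of the lemma. I would read its word by developing it in the octagon-tiled universal cover: follow the boundary of the octagon from $V_7$ back to $V_3$ through the vertex $V_1$ side. Using edge labels this produces $s^{-1}r^{-1}yx$ in $\pi_1(F,p)$, up to the corner conventions, which I would check against record D5 of Appendix~\ref{app:machine}.

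Freeness of $\langle x,r,\kappa_3\rangle$ would then follow from an explicit spine. The complement retracts onto a graph made of the edges $E_1$ and $E_5$, the dotted arc, and the boundary segments of the corner and middle regions. After collapsing a maximal tree, exactly three loops should remain, namely $x$, $r$ and $\kappa_3$.

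The main obstacle is this last step. I would need to verify that the vertex identifications among the eight corners, after cutting along $c$, really leave $V_3$ and $V_7$ connected through the middle region and not through a corner region. I would also need to confirm that no other corner connections create an extra loop or kill one. The identification of corners is exactly where a wrong basing would change the word by a conjugation, so I would check the corner cycle of the cut surface carefully, and confirm the word against the mechanized development in Appendix~\ref{app:machine}.
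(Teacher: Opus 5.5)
Your proposal is correct and follows the paper's own argument. In both, you cut along the chords, collapse trees in the complementary disks so that only the $x,y,r$ petals (respectively the $x$, $r$ petals and the middle $V_7$--$V_3$ arc) survive, with $\chi=-2$ fixing the rank, and read $\kappa_3$ by development.

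Two minor points. First, the boundary route through the $V_1$ side literally reads $r^{-1}s^{-1}xy$, which equals $s^{-1}r^{-1}yx$ only after applying the surface relation; the $V_5$ side reads it directly. Second, your worry about corner identifications is moot because $c$ misses $p$. Collapsing each disk (the middle one split by $\kappa_3$) through its free $c$-arc, and then collapsing the dangling $y$- and $s$-half-edges, leaves exactly $x\vee r\vee\kappa_3$.
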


\begin{proof}
Remove open collars of the displayed chords before making the edge
identifications.  The single $e$-chord cuts the octagon into two disks.
In each disk contract a tree joining its edge intervals and the appropriate
copy of the $e$-collar.  After the paired polygon edges are identified, the
three uncontracted edges are the loops $x,y,r$; the resulting rose is a spine
of $F\smallsetminus\nu e$.  Equivalently, the quotient cell structure has
$V=3,E=7,F=2$ and $\chi=-2$, while the explicit tree collapse leaves three
loops.

The two nonintersecting $c$-chords cut the octagon into three disks: the
upper, middle and lower regions visible in Figure~\ref{fig:octagon}.  Contract
in each region a tree joining all its polygon-edge intervals and collar
intervals, retaining the $x$-edge pair, the $r$-edge pair, and one arc across
the middle region from the $V_7$ wedge to the $V_3$ wedge.  The quotient is a
three-petal rose, represented by $x,r,\kappa_3$, and is a spine of
$F\smallsetminus\nu c$.  This time the unreduced cell count is
$V=5,E=10,F=3$, again $\chi=-2$.  The retained middle arc is disjoint from
both $c$-chords by inspection of the displayed polygon; the same strict
segment-disjointness is checked from the frozen endpoint coordinates by
record D5 in Appendix~\ref{sec:engine}.  Developing it from $V_7$ to $V_3$
without crossing a polygon edge gives
$s^{-1}r^{-1}yx$.  Thus the collapse proves both the geometric disjointness
and the asserted free basis; the vanishing of algebraic intersection is only
a secondary check.
\end{proof}

The first basis shows that (ii)--(iii) already contain the full clean set for
$T_\beta$: no completion is needed.  For $T_\alpha$, the clean relations in
(iii) cover $x,r$ and Lemma~\ref{lem:drilledbases} supplies exactly one further
basis element.  Its transport annulus around $B$ misses $T_\alpha$ because
$\kappa_3\cap c=\varnothing$, and it misses $T_\beta$ because the based loop
$\bar\beta$ is disjoint from the $\beta$-cut.  Hence it gives the true clean
relation
\[
R_3:\qquad B\,\kappa_3\,B^{-1}=\tp(\kappa_3)=r^{-1}s^{-1}x .
\]
For the last equality, $\tp(\kappa_3)=s^{-1}r^{-1}yxy^{-1}$ and the surface
relation gives $yxy^{-1}=[r,s]x$, so
$s^{-1}r^{-1}yxy^{-1}=r^{-1}s^{-1}x$.  Thus $R_3$ is a relation in the
actual complement, not an algebraically disjoint surrogate.  Adjoining it
passes to a quotient of the relation group, so earlier trivial outcomes remain
trivial.

\begin{lemma}[Surjection onto the manifold group]\label{lem:surjection}
For fixed choices of the signs in (C7), let $G$ be the group on
$x,y,r,s,A,B,M,N$ defined by relations \emph{(i)--(vi)} of
Section~\ref{sec:presentation} at $(m,n)=(0,0)$.  There is a natural epimorphism
\[
G\twoheadrightarrow\pi_1(V).
\]
Consequently, $G=1$ implies $\pi_1(V)=1$.
\end{lemma}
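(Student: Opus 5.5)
The epimorphism will be built in two stages: first onto $\pi_1(C)$ of the torus complement $C=R\smallsetminus\nu(T_\alpha\cup T_\beta)$, based at $(p,q)$, and then onto $\pi_1(V)$. Each generator of $G$ is sent to the based loop it names. The fiber generators $x,y,r,s$ go to the octagon edge loops in the fiber over $q$; this fiber misses both tori, since $q$ lies on neither cut. The base generators go to $A=[\bar\alpha]^{-1}$ and $B=[\bar\beta]^{-1}$. The meridians go to $M$ and $N$, based along $y_1$ and $s_2$ as in Section~\ref{subsec:corrections}.

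To get a homomorphism $G\to\pi_1(C)$, I check the relators one at a time.
\begin{enumerate}
\item[R0] This holds in the fiber over $q$.
\item[A1--A3, B1, B2, R3] Each of these is a clean monodromy relation, realized by a transport annulus. For A1--A3 and B1, B2 the annulus misses both tori by the crossing count of Remark~\ref{rem:workedmembrane}. For R3 it misses both tori by Lemma~\ref{lem:drilledbases} and the argument preceding $R_3$.
\item[M1--M3] Each transport annulus meets the tori in exactly one transverse point, as recorded in Table~\ref{tab:relators}. Puncturing the annulus there yields the relation with one conjugated-meridian factor. The conjugating path is trivial for M1 and M2, because of the chosen basings of $N$ and $M$; it is $\delta=r^{-1}$ for M3. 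The sign is fixed by the chosen value in (C7).
\end{enumerate}

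Next I compose with the surjection $\pi_1(C)\to\pi_1(V)$, which comes from Luttinger surgery (Section~\ref{subsec:luttinger}). The two filling relators F1 and F2 die in $\pi_1(V)$ because $M\lambda^{\pm1}$ and $N\cdot\mathrm{dir}^{\mathrm{base}}_{T_\beta}$ are exactly the curves that bound the glued-in meridian disks. Two facts justify this. First, Lemma~\ref{lem:framing} says that the push-off words are Lagrangian-framed. Second, Section~\ref{subsec:pushoffbasing} shows that the meridian and its push off are joined to the basepoint by the same path, as the BK08/BK09 bookkeeping requires. The relations therefore descend, and we obtain a homomorphism $G\to\pi_1(V)$.

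Surjectivity comes from the fibration. The group $\pi_1(R)$ is generated by the fiber group together with $A$ and $B$ (Proposition~\ref{prop:presentation}). By general position, $\pi_1(C)$ is generated by $\pi_1(R)$-lifts together with conjugates of the meridians. More precisely, van Kampen applied to $R=C\cup\nu(T)$ shows that $\pi_1(C)$ is normally generated over $\pi_1(R)$ by the meridians. Since every element of $\pi_1(R)$ is represented by a loop avoiding the codimension-two tori, the images of $x,y,r,s,A,B$ together with conjugates of $M$ and $N$ generate $\pi_1(C)$. The conjugates of $M$ and $N$ already lie in the image, because the image is a subgroup containing $M$, $N$ and all the other generators.

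The main obstacle is the general-position step: every loop in $C$ must be shown to be a product of words in the chosen generators and meridian conjugates. I would handle it through the exact sequence argument of Lemma~\ref{lem:complement}, upgraded to $\pi_1$ by van Kampen on $C\cup\nu(T_\alpha)\cup\nu(T_\beta)$. Once surjectivity holds, $G=1$ forces $\pi_1(V)=1$ immediately.
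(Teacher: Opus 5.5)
\textbf{There is a genuine gap, in the surjectivity step.} The rest of your outline follows the paper's proof: you verify the relations on the based transport annuli, the push offs and the drilled-fiber loop, and you take surjectivity from general position plus van Kampen.

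Van Kampen for $R=C\cup\nu T_\alpha\cup\nu T_\beta$ gives only $\ker(\pi_1(C)\to\pi_1(R))=\langle\!\langle M,N\rangle\!\rangle$, the normal closure taken in $\pi_1(C)$. So $\pi_1(C)$ is generated by $x,y,r,s,A,B$ together with all $gMg^{-1}$ and $gNg^{-1}$ for $g\in\pi_1(C)$. You then say these conjugates ``already lie in the image, because the image is a subgroup containing $M$, $N$ and all the other generators.'' That only covers conjugators $g$ lying in that subgroup. Showing the subgroup is all of $\pi_1(C)$ is exactly what is to be proved, so the step is circular.

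It also fails as a general principle. For the trefoil complement $C\subset S^3=R$, the empty set generates $\pi_1(R)$, but a meridian generates only $\Z$, not the knot group. So the real obstacle is not general position, which yields only normal generation, but control of the conjugators. The paper's written proof makes the same one-line inference (``chosen lifts \ldots\ together with the based meridians $M,N$ therefore generate $\pi_1(C)$''). You have reproduced it faithfully, but the step needs an argument in either case.

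\textbf{Two repairs are available.}

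For the clause actually used later, $G=1\Rightarrow\pi_1(V)=1$, surjectivity is unnecessary. If all eight generators die in $\pi_1(V)$, then every conjugate $gMg^{-1}$, $gNg^{-1}$ dies too. Since $\pi_1(C)\to\pi_1(V)$ is onto, $\pi_1(V)=1$.

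For the epimorphism itself you must use the geometry of these particular lifts.
\begin{enumerate}
\item The fiber $F_q$ and the section $\Gamma_p=\{p\}\times T_0$ lie in $C$, because both monodromies fix $p$ and $p\notin c\cup e$. Every fiber meets $C$ in a connected set containing $p$; near the corner this uses $[c]\neq[e]$ mod $2$.
\item Subdivide a loop, and at each subdivision point insert a path in $\Gamma_p$ followed by a fiber arc. This writes the loop as a product of $\Gamma_p$-conjugates of loops in small charts $\pi^{-1}(D)\cap C$.
\item If $D$ misses the cuts, the chart group is a fiber group. Its $\Gamma_p$-conjugates lie in $\langle x,y,r,s,A,B\rangle$.
\item If $D$ meets one cut, the chart is modelled on $(F\times\mathbb{R})\smallsetminus(c\times 0)$ or its $e$-analogue. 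The formula $\pi_1\big((F\times\mathbb{R})\smallsetminus(c\times 0)\big)=\pi_1(F)*_{\pi_1(F\smallsetminus c)}\pi_1(F)$ shows the chart group is generated by a fiber group and one meridian whiskered by a fiber arc such as $y_1$ or $s_2$.
\item At the corner, apply this twice, using $c\cap e=\varnothing$.
\item Slide the whisker's foot along the cut inside $\Gamma_p$. This shows each such whiskered meridian is a conjugate of $M^{\pm1}$ or $N^{\pm1}$ by an element of $\langle x,y,r,s,A,B\rangle$.
\end{enumerate}
Together these give $\langle x,y,r,s,A,B,M,N\rangle=\pi_1(C)$, and hence the epimorphism $G\twoheadrightarrow\pi_1(V)$.
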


\begin{proof}
Put
$C=R\smallsetminus\big(\nu T_\alpha\cup\nu T_\beta\big)$.  General
position makes every loop in $R$ disjoint from the two codimension-two tori,
so $\pi_1(C)\to\pi_1(R)$ is surjective.  Moreover, its kernel is normally
generated by meridians of the two tori: make a null-homotopy in $R$
transverse to the tori and remove small disks around its intersection points;
the new boundary circles are conjugates of the meridians.  Since
$x,y,r,s,A,B$ generate $\pi_1(R)$, chosen lifts of those loops together with
the based meridians $M,N$ therefore generate $\pi_1(C)$.

The inclusion $C\hookrightarrow V$ is surjective on fundamental
groups by van Kampen; each attached $T^2\times D^2$ kills its stated filling
slope.  Relations (i)--(iii) hold by the surface and clean transport
annuli, the corrected relations (iv) by
Section~\ref{subsec:corrections}, the filling relations (v) by
Sections~\ref{subsec:directions}--\ref{subsec:framing}, and relation (vi) by
the drilled-fiber argument immediately above.  Hence the map from the free
group on the eight displayed generators to $\pi_1(V)$ kills every
defining relation of $G$ and factors through the asserted epimorphism.
\end{proof}

Only the epimorphism is used below. The next section proves that $G$ is
trivial, which implies the same for its quotient $\pi_1(V)$; without an
isomorphism, a nontrivial value of $G$ would not determine $\pi_1(V)$.

\section{Deciding the relation groups}\label{sec:results}

Let $G$ denote the geometrically derived group of
Lemma~\ref{lem:surjection}.  By that lemma, it is enough to prove $G=1$.
To determine the dependence on convention choices, we also vary the five signs of
Convention~\ref{conv:collected}(C7), the left/right placement of each of the
three transport corrections, either conjugating arc for the $Bs$ correction,
and either arc, sign, and pre/post-$B$ placement for the based $T_\beta$
push-off correction.  These choices give
$2^5\cdot2^3\cdot2\cdot(2\cdot2\cdot2)=4{,}096$ relation systems.  Coset
enumeration terminates with order one in every case.  In particular it does
so for the geometrically derived member, proving the group-theoretic input to
Theorem~\ref{thm:D}.
Repeating the entire calculation after replacing the displayed $y_1$-side
word $Ax$ by the adjacent $y_2$-side word $Ar^{-1}=Ax(rx)^{-1}$ again gives
$4{,}096$ trivial groups. This second run verifies the one-step re-indexing;
it is not needed for the specified $n=0$ member.

For comparison, the same formulas were evaluated at the nine formal exponent
pairs $(m,n)\in\{-1,0,1\}^2$ and in a second based diagram. These calculations
do not identify the fundamental group of another manifold. Two standard
procedures decide the resulting finitely presented groups; the scripts and
full run records are ancillary files
(Appendix~\ref{app:machine}).

\emph{Coset enumeration.} Six of the nine formal exponent pairs (the
Lidman--Piccirillo surgery $(0,0)$ among them) are decided outright: the
enumeration terminates with $|G|=1$, in all $32$ sign assignments of
Convention~\ref{conv:collected}(C7). At the stated cap, the $n=-1$ column
resists enumeration. Since coset enumeration
can fail to terminate even for a presentation of the trivial group
\cite{HEO05}, these runs do not decide the resistant column; rewriting does.

\emph{Knuth--Bendix completion.} Shortlex Knuth--Bendix completion
(\textsf{kbmag}~\cite{KBMAG}) decides every formal exponent pair, including the resistant
column, in under a second each: the completion reaches confluence, the
language of irreducible words has size one, and \emph{every generator
reduces to the identity}.  This last test is sufficient for triviality: the
rewriting rules constructed by the program are consequences of the defining
relators, and the script verifies that the normal form of every generator is
the empty word. The distributed log records the aggregate result rather than
a rule-by-rule completion trace, so the calculation is reproducible but is
not presented as a formal certificate. For comparison, the same procedure run on the genus-$2$ surface
group builds its automatic structure and reports an infinite group; run on
the (infinite) group presented by the clean relations alone, it does not
complete; and a run with a different commutator convention
(hence a different group) returns no triviality decision. An independently implemented
completion program (MAF~\cite{MAF}) reproduces the conclusions on eight
representative presentations. A historical finite-quotient search, carried
out before the present $T_\alpha$ arc re-indexing, examined two different hard
formal systems: the
groups are perfect, have no proper subgroup of index at most $7$, and admit
no nontrivial finite quotient of order at most $10^5$. These observations are
consistent with triviality of those diagnostic presentations but do not
establish it (Appendix~\ref{app:machine}).

\begin{remark}[Dependence on convention choices]\label{rem:robust}
In addition to the fixed-$V$ computation above, the nine-pair comparison grid
is $288/288$ trivial. A second based diagram
(a different representative of
$\bar\beta$, whose derivation changes the $Bs$ relation by exactly one
further meridian factor, the second potential crossing vanishing because
$\varphi_0^{-1}(s)\cap e=s\cap a=\varnothing$) across its full grid
is likewise decided after adjoining $R_3$ ($576/576$ trivial).  The
pre-$R_3$ systems were retained for comparison: many coset enumerations
overflow. Across the
$1{,}408$ diagnostic systems tabulated in Appendix~\ref{app:record}, every
abelianization vanishes and no terminating enumeration produces a finite
nontrivial group.  The theorem, however, uses only the geometrically derived
fixed-$V$ member of the $4{,}096$-case family. The
corrected push-off word of Section~\ref{subsec:pushoffbasing} makes the
presentations markedly \emph{harder} to enumerate than the earlier, wrong
word. Together with the generic trivial outcomes obtained from incorrect
words in Section~\ref{subsec:E2}, this shows why the geometric derivation of
the relations must be separated from the subsequent group computation. The
full tables appear in Appendix~\ref{app:machine}.
\end{remark}

\begin{proof}[Proof of Theorem~\ref{thm:D}]
The $4{,}096$-case coset enumeration for the $y_1$-side word $Ax$ proves that
every member of the finite robustness family is trivial, including the
geometrically derived relation system.
Lemma~\ref{lem:surjection}
gives an epimorphism $G\twoheadrightarrow\pi_1(V)$, whose source is trivial;
hence $\pi_1(V)=1$.
\end{proof}

\section{Proof of Theorems~\ref{thm:A},~\ref{thm:B} and~\ref{thm:C}}\label{sec:consequences}

\begin{proof}
Theorem~\ref{thm:D} supplies the hypothesis $\pi_1(V)=1$ of
Theorem~\ref{thm:reduction}(c) for the specified piece.  By
Remark~\ref{rem:V1}, its double also has trivial fundamental group.  Part (b)
then shows that $Z$ is homeomorphic and not diffeomorphic to
$S^2\times S^2$ and that $(B,W)$ is a homeomorphic, non-diffeomorphic pair
distinguished by the sliceness of $4_1$.  Part (c) shows that the reglued
manifold is simply connected, homeomorphic and not diffeomorphic to
$\CP^2\#\bCP^2$.  These are Theorems~\ref{thm:A}--\ref{thm:C}.
\end{proof}

\appendix

\section{Computational verification}\label{app:machine}

The geometric inputs are proved in the body: the marked-surface identification
in Lemma~\ref{lem:markednormalform}, the transport-annulus arguments of
Section~\ref{subsec:corrections}, the framing identification in
Lemma~\ref{lem:framing}, and the drilled-fiber basis in
Lemma~\ref{lem:drilledbases}. The ancillary files implement the algebraic
developments from the stated combinatorial descriptions and reproduce the
numerical summaries, consistency checks, and finite-presentation
computations; the geometric identifications themselves are established in
the body. Specifically, the archive
contains
the model and experiment scripts of Part~I (\texttt{monodromy\_check2.g},
\texttt{model\_check3.g}, \texttt{pi1\_grid.g}, \texttt{pi1\_v2b.g},
\texttt{ap\_check.g}); the development algorithm (\texttt{develop.py})
with its run record \texttt{develop\_out.txt}, whose blocks D1--D5 are
the records cited by number in Table~\ref{tab:relators}; the decision scripts (\texttt{decide.g},
\texttt{fixed\_v\_certify.g}, \texttt{decide2.g}, \texttt{vdiag2.g}, \texttt{placement\_check.g},
\texttt{vr\_check.g}); the Knuth--Bendix checks of the full grid
in both diagrams (\texttt{kb\_certify.g}, \texttt{kb\_diag2\_full.g};
they require the \textsf{kbmag} package~\cite{KBMAG}) with their run
records; the calibration scripts of Appendix~\ref{app:calibrations}
(\texttt{decide\_t4.g}; \texttt{decide\_seifert.g},
\texttt{confirm\_coherent.g}); the independent MAF cross-checks
(\texttt{maf\_export.g}, \texttt{maf\_certify.sh},
\texttt{maf\_export2.g}, \texttt{maf\_certify2.sh}~\cite{MAF}) with both
run records; the deep-enumeration record (\texttt{tc\_deep.g}); the
finite-quotient scripts (\texttt{phase2\_*}, \texttt{phase3\_*}); and all
run logs.  The GitHub repository separately provides
\texttt{papers/walkthrough.pdf}; that expository companion is not contained in
the arXiv ancillary archive.

\subsection{The development calculation}\label{sec:engine}

A based loop transverse to the edge circles is specified by a finite
combinatorial description: the departure corner wedge $V_i$, the ordered
list of edges
crossed, and the arrival wedge $V_j$. In the universal cover tiled by octagons,
the tile with corner $V_i$ at the base lift is $w_i^{-1}D_0$, where
$w_i$ is the length-$(i{-}1)$ prefix of the relator; crossing out of a tile
through its edge $E_i$ composes the deck element with
$g_{\mathrm{out}}(i)=w_i\,w_{\mathrm{partner}(i)+1}^{-1}$
($E_1{\sim}E_3$, $E_2{\sim}E_4$, $E_5{\sim}E_7$, $E_6{\sim}E_8$, reversed). The
based word of such a description is
$w_i^{-1}\big(\prod g_{\mathrm{out}}\big)w_j$,
reduced by Dehn's algorithm for the genus-2 relator (surface groups have Dehn
presentations, so the reduction decides the word problem). The algorithm
(\texttt{develop.py}) implements exactly this and nothing else; every input
word quoted in the body of the paper is its output on the stated
description.  Supplying that description is a geometric input, not a task
performed by the algorithm.

\emph{Checks on the implementation.}
The output of each check is included in the ancillary run record.
\begin{enumerate}
\item[V1.] Every edge-parallel description returns its generator
($x,y,x^{-1},r,s,s^{-1}$).
\item[V2.] The vertex-link loop (crossing all eight edges in the order
$E_1E_4E_3E_2E_5E_8E_7E_6$ dictated by the link walk
$V_1V_4V_3V_2V_5V_8V_7V_6$) develops to $1$: a single global identity
constraining all eight decks and the link structure simultaneously.
\item[V3.] The push-off descriptions return $a\simeq x^{-1}$, $b\simeq y$,
$e\simeq r^{-1}$, $d\simeq s$, exactly swap-equivariantly.
\item[V4.] The two conjugating-path routes of
Section~\ref{subsec:corrections} differ by a based copy of the closed curve
$c$ (the slide-around-the-curve identity) exactly.
\item[V5.] The frozen chord coordinates realize the intersection table of
Figure~\ref{fig:octagon}, and the displayed $V_7$--$V_3$ middle-region path is
strictly disjoint from both $c$-chords.
\end{enumerate}

\subsection{Computational results}\label{app:record}

The tables below record the group computations. Enumerations are capped at
$4\times10^5$ cosets, and ``overflow'' denotes a run that exceeded this cap.
The Knuth--Bendix row records the $R_3$-augmented systems subsequently decided
by rewriting.

\begin{center}
\begin{adjustbox}{max width=\textwidth}
\begin{tabular}{lllll}
\toprule
run set & cases & trivial & overflow & other\\
\midrule
fixed-$V$ computation with $R_3$ & 4,096 & 4,096 & 0 & 0\\
adjacent $T_\alpha$ section with $R_3$ & 4,096 & 4,096 & 0 & 0\\
initial relation system (i)--(v), all $(m,n)$ and signs & 288 & 116 & 172 & 0\\
placement variants with $R_3$ at $(0,0)$ ($2^3$ placements $\times$ signs) & 256 & 256 & 0 & 0\\
alternative based diagram (\texttt{vdiag2.g}) & 576 & 8 & 568 & 0\\
relation system with $R_3$ & 288 & 192 & 96 & 0\\
Knuth--Bendix triviality checks (both full grids with $R_3$) & 864 & 864 & 0 & 0\\
\bottomrule
\end{tabular}
\end{adjustbox}
\end{center}

All $1{,}408$ comparison systems have $H_1=0$, and no terminating enumeration
produced a finite nontrivial group. The corrected
push-off basing word (Section~\ref{subsec:pushoffbasing}) makes the
presentations markedly harder for coset enumeration than their pre-repair
versions (overflows are more frequent, and in the second diagram nearly
universal), but the decisions are unchanged. After adjoining $R_3$, six of
the nine formal $(m,n)$ systems are enumeration-trivial in
all $32$ sign conventions (the Lidman--Piccirillo system $(0,0)$
among them), and the $n=-1$ column resists enumeration; the
rewriting computations then decide each formal system in both
diagrams, in every convention. The second diagram (a detour representative
of $\bar\beta$, whose derivation reduces to a single extra meridian factor
$N^{\pm}$ in the $Bs$ relation: the second potential crossing vanishes
because $\varphi_0^{-1}(s)\cap e=s\cap a=\varnothing$, and the conjugating
path of $M$ is
unchanged because the detour-transport loop is trivial by the
intersection data) agrees with diagram 1 wherever enumeration decides
anything, and rewriting decides its full grid.

\subsection*{The enumeration-resistant formal systems}
In the present $y_1$-side indexing, the $n=-1$ column exceeds the
$4\times10^5$-coset cap, and rewriting decides it. An earlier diagnostic,
made before the $T_\alpha$ arc was re-indexed, pushed two different formal
systems past $10^8$ cosets. Those old-coordinate systems are not members of
the current nine-cell grid and are not used in the proof. A finite-quotient
search (scripts and run records in the ancillary files) considered those two
systems in eight representative sign conventions (the two uniform and the
two alternating patterns per system, in the pre-repair form of the
presentations): the groups
are perfect, have no proper subgroup of index at most $7$ (low-index
enumeration), and admit no epimorphism onto any of the $31$ nonabelian
simple groups of order at most $10^5$; since a nontrivial finite quotient
of a perfect group surjects onto a nonabelian simple group of no larger
order, \emph{no nontrivial finite quotient of order at most $10^5$}
($\approx56$ CPU-hours of \textsf{GAP} computation). These finite-quotient
results are consistent with, but do not establish, triviality of those
historical formal presentations.

Knuth--Bendix completion (\textsf{kbmag}~\cite{KBMAG}, shortlex, on the
simplified presentations) decides every formal exponent system in under a second each,
with the test structure and the three controls described in
Section~\ref{sec:results}. With the corrected, sign-coupled word, the
computations decide the
full relation-system grid ($288/288$ trivial, agreeing with
the enumeration on all $192$ cases it decided) and the full second
diagram ($576/576$: all nine exponent pairs and all $64$ sign conventions
each).  Without $R_3$, Knuth--Bendix is inconclusive on diagram 2's hard
formal systems: there is no confluence and therefore no decision. An independently implemented completion
program,
MAF~\cite{MAF}, reproduces the kbmag conclusions on the eight representative
presentations, run on the pre-repair word and, separately, on the
corrected word: in all eight cases both times the word acceptor accepts
exactly one word and every generator reduces to the identity, while the
surface-group control comes out infinite (both run records in the
ancillary files). Enumeration can exceed $10^8$ cosets even for a
presentation of the trivial group \cite{HEO05}, so the overflow entries are
left undecided by enumeration and are resolved by the rewriting computations.

\subsection{Geometric inputs to the computation}\label{sec:refute}

At the specified surgery variant, changing one derived input word at a time
(dropping the drift, inverting a push off, restoring the earlier word, or
flipping the correction sign) still gives a trivial relation group in $14$ of
$15$ cases, with $H_1=0$ in all cases (ancillary run record). Thus the group
decision and abelianization do not by themselves verify the geometric words.
The computation depends on the following geometric inputs, together with the
comparisons in Appendices~\ref{subsec:t4} and~\ref{subsec:seifert}.
\begin{enumerate}
\item \textbf{The $T_\alpha$ framing word} $\mathrm{dir}_{T_\alpha}^{\mathrm{base}}=Ax$
(Section~\ref{subsec:directions}). This is the Akhmedov--Park-Lemma-7-type
datum. Lemma~\ref{lem:framing} identifies the framing by explicit Weinstein
charts, including the two required Moser normalizations. Passing from those
geometric push offs to the displayed word uses the crossing-at-$\eta$
conjugation formula, the $y_1$-side half-arc $x$, the identity
$\varphi_0(c_y)=c_s$, and the development calculation of
Appendix~\ref{sec:engine}. The $T^4$
comparison of Appendix~\ref{subsec:t4} has trivial monodromy and therefore no
drift analogue. The Seifert comparison of Appendix~\ref{subsec:seifert} does
test this input: its point-push monodromy, meridian structure, transport law,
and coherent drift words match the recorded finite fingerprints of
independently presented groups at three surgery coefficients and distinguish
seven alternative word families. At the
degenerate basing point $c_y$, each side of the basing arc determines its
transport conjugator and push-off label; the two labels differ by one
$c$-multiple. The $y_1$-side word $Ax$ is $n=0$ and the $y_2$-side word
$Ar^{-1}$ is $n=1$; both are included in the computation of
Section~\ref{sec:results}.
\item \textbf{The correction architecture} (Sections~\ref{subsec:corrections}
and~\ref{subsec:pushoffbasing}). Basing the meridians along $y_1,s_2$ makes
two correction terms $M^{\pm},N^{\pm}$. The third uses the conjugating path
$\delta=r^{-1}$ and the V4 identity. The push-off basing correction comes
from the single interior intersection of the square swept by $s_2$ around
$\bar\beta$ with $T_\alpha$, at $(c_s,\text{wrap})$, and its sign is
anti-coupled to that of the $Bs$ correction. The intersection data place the
$c$-crossing before the $e$-crossing on the $s$-edge; a chord-diagram
calculation verifies that this is the unique planar realization compatible
with $c\cap e=\varnothing$. The base loop $\bar\beta$ crosses the
$\alpha$-cut once. Both left and right placements occur in the $256$-case
calculation. Because a simultaneous reversal of all annulus orientations
would not be detected by varying placements alone, the same convention is
also compared with the $T^4$ calculation. This comparison led to the
correction recorded in Remark~\ref{rem:history}.
\item \textbf{The marked model and $R_3$}
(Lemmas~\ref{lem:markednormalform} and~\ref{lem:drilledbases}): the five-chain
fills the genus-$2$ surface, the equivariant ribbon-graph identification sends
the marked curves to Figure~\ref{fig:octagon}, and the explicit tree collapse
places $\kappa_3$ in $F\smallsetminus\nu c$. Record D5 checks the frozen
coordinates and the word reading; the filling and tree-collapse arguments are
given in the text.
\item \textbf{The model conventions} (Section~\ref{sec:basedmodel}): the cut-square
gluing directions; the identification of $\bar\alpha,\bar\beta$ holonomies with
$\tf,\tp$ (an inversion here is an $\varepsilon$-flip, covered by
Remark~\ref{rem:robust}); and the octagon link walk, checked by V2.
\item \textbf{The development calculation} (Appendix~\ref{sec:engine}).
Checks V1--V5 compare its output with the edge generators, vertex-link loop,
push-off descriptions, arc identity, and chord coordinates.
\end{enumerate}
The rewriting computation supplements coset enumeration. The
\textsf{kbmag} implementation constructs rules from consequences of the
relators and gives every generator the empty normal form; the independently
implemented MAF program~\cite{MAF} gives the same result on the selected
cases. The distributed logs record the resulting normal forms, confluence,
and language size rather than a formal rule-by-rule completion trace
(\texttt{kb\_certify.g}, \texttt{maf\_certify.sh}, and their ancillary run
records).

\subsection{Comparison with an independent relation sheet}\label{subsec:diff}

Suppose a based relation sheet for $\pi_1(V)$ has been derived independently
using the same eight generators. Before comparing it with
Table~\ref{tab:relators}, one should normalize the following four changes,
none of which represents a geometric discrepancy:
\begin{enumerate}
\item \emph{Global inversions} of any generator: direction conventions, the
$\varepsilon$-flips of Remark~\ref{rem:conventions}.
\item \emph{The five signs} $\varepsilon_3,\varepsilon_4,\varepsilon,
\varepsilon_A,\varepsilon_B$ of (C7): the conclusion holds for all $32$
assignments, so a sign mismatch is not a disagreement.
\item \emph{Basing conjugation}: any relator may be conjugated, and any meridian
re-based ($\mu\mapsto w\mu w^{-1}$), without changing the presented group.
\item \emph{The arc identity}: $\delta=r^{-1}$ and $\delta'=x$ differ by a
based copy of $c$ (validation V4), immaterial in the group
(Section~\ref{subsec:pushoffbasing}); at the level of the drift word the two
labels differ by one unit of $n$, which the $n$-range covers
(Appendix~\ref{subsec:seifert}).
\end{enumerate}
Any remaining mismatch is localized to a single geometric datum: a crossing
count, a conjugating path, or a framing word. The third column of
Table~\ref{tab:relators} gives the derivation and computational reference for
each relation.

Two structural checks apply.  An exact candidate presentation must have the
known abelianization: $H_1=0$ after both fillings, $\Z$ with one filling, and
$\Z^2$ with neither.  More basically, any relation system built from a
generating set and relations true in the manifold surjects onto $\pi_1$, as in
Lemma~\ref{lem:surjection}. A trivial relation group therefore implies that
the manifold group is trivial. Conversely, a nontrivial relation group
determines the manifold group only when the presentation is known to be
exact; any nontriviality claim therefore requires such a proof.

\medskip
By the sensitivity analysis of Section~\ref{subsec:E2}, any remaining
discrepancy would identify the based-loop datum relevant to the exotic
$S^2\times S^2$ problem.

\section{Independent calibration examples}\label{app:calibrations}

This section gives the details of the two comparisons summarized in
Section~\ref{subsec:calibrations}.
In this section, a computational ``match'' means equality of the
abelianization and of the multisets of abelian invariants of subgroups through
the stated index bound, supplemented in the $T^4$ example by explicit
non-abelian finite quotients. These finite fingerprints are discriminating
checks, not proofs that two finitely presented groups are isomorphic.

\subsection{The Baldridge--Kirk $T^4$ configuration}\label{subsec:t4}

We first apply the cut-model conventions, transport-annulus crossing counts,
meridian basings, conjugating paths, and filling relations to the disjoint
Lagrangian tori $T_1=X\times A_1$, $T_2=Y\times A_2$ in $T^4$ considered by
Baldridge and Kirk~\cite{BK09}. They give an explicitly based presentation of
the complement, and double $\pm1$ Luttinger surgery has
$\pi_1=(\Z^2\rtimes_A\Z)\times\Z$, with
$|\operatorname{tr}A|\in\{1,3\}$ according to the relative signs. These
groups are non-abelian in every convention.

For the derived conjugating-path data $(1,b)$ and its
$T_1{\leftrightarrow}T_2$ image $(b,1)$, all $64$ sign and placement
conventions have the expected abelianization, a non-abelian finite quotient,
and the expected low-index fingerprint through index $5$, including the
$\operatorname{tr}=3$/$\operatorname{tr}=1$ split. Each of the seven other
pairs, including the chirality reversal, fails this comparison in all $64$
cases. For six
of these pairs, half of the resulting groups are abelian; the double reversal
$(b^{-1},b^{-1})$ instead gives incorrect non-abelian groups throughout. The
per-pair counts are given in the ancillary run record. A single surgery gives
the expected $H_1=\Z^3$, a quotient onto the Heisenberg group modulo $3$, and
the low-index fingerprint of $\mathcal{H}\times\Z$, where $\mathcal{H}$ is
the integer Heisenberg group, in every convention. The published
Baldridge--Kirk words agree case by case with the words used here.

The initial hand derivation for the $T^4$ example disagreed with the published
presentation at one circle-slide crossing. Reexamining the present derivation
at the corresponding crossing changes
$\mathrm{dir}_{T_\beta}^{\mathrm{base}}$ and gives the correction in
Section~\ref{subsec:pushoffbasing}. The symmetric words remain unchanged:
$y_1\cap e=\varnothing$ protects
$\mathrm{dir}_{T_\alpha}^{\mathrm{base}}$, and the basing-arc tail of
$\mathrm{dir}_{T_\beta}^{\mathrm{fib}}$ is crossing-free. The scripts and
run records are included in the ancillary files. Since $T^4$ has trivial
monodromy, this comparison does not address the monodromy drift in
$\mathrm{dir}_{T_\alpha}^{\mathrm{base}}=Ax$; that point is treated by
the Seifert-fibered comparison below and by Lemma~\ref{lem:framing}.

\subsection{A Seifert-fibered test of the base-direction push off}\label{subsec:seifert}

To examine the monodromy drift, consider the same fiber $F$, involution
$\varphi_0$, and invariant curve $c$, now with trivial vertical monodromy. Let
$X=N\times S^1$ with $N=\mathrm{MT}(\varphi_0)$, Seifert fibered over the
torus with two cone points of order $2$, surgered once along
$T_\lambda=\lambda\times S^1$, the drift section itself times the extra
circle, with the $T_\alpha$-framing. The curve $\lambda$ meets each fiber of
$N$ once, so the drilled complement fibers with
\emph{once-punctured} fiber and monodromy equal to the swap composed with
the point-push along the drift arc, and the meridian is a conjugate of the
boundary word $[x,y][r,s]$ rather than a free generator. Consequently, the
meridian-transport law follows from the other input words. An independent
description is classical: $1/k$-surgery on a
curve lying in an incompressible torus, taken with the torus framing, is a
$k$-fold torus twist, so the surgered manifolds are graph manifolds with one
JSJ wall, presented independently over the Seifert piece of the wall
complement. Two checks do not use the development algorithm. The untwisted
closure matches $\pi_1(N)$ through the computed invariants, whereas an
alternative wall word has the predicted homology defect; moreover, the hand
calculation $H_1=\Z^2\oplus\Z/|k|$ agrees on both sides.

The resulting compatibility conditions determine the meridian basing,
transport conjugator, and push-off label. They give the derived package: the
transport conjugator is $\delta=r^{-1}$, and the meridian sign is
anti-coupled to the sign of the transport relation's meridian factor, as in
Section~\ref{subsec:pushoffbasing}. The square of the monodromy carries the
meridian to its $(rx)$-conjugate, the drilled form of the drift identity
$\lambda^2$. With these words, the abelianizations and low-index fingerprints
through index $6$ match those of the independently presented groups at
$k\in\{1,-1,2\}$ for both mirror-image packages. Seven alternative families
fail this comparison at each coefficient: reversed drift, reversed ordering,
the basing-conjugate family not distinguished by the preceding comparisons,
a commutator in place of the meridian factor, omission of the point-push, an
arc-incoherent meridian factor, and a meridian slip in the framing
(Appendix~\ref{sec:refute}).

This comparison also records the dependence on the basing arc. Substituting
the $y_2$-side word $Ar^{-1}$ into the $y_1$-based filling relation fails the
finite-fingerprint comparison. At that basing, the compatible word is
$Ax=Ar^{-1}\cdot(rx)$. Thus the arc pairing is
basing-dependent within the $\lambda+\Z c$ family of drift sections, with the
one-unit-of-$n$ ambiguity noted in Section~\ref{subsec:directions}. The
monodromy, meridian structure, transport law, basing arc, arc route, and sign
must be chosen coherently.

An additional calculation treats the degenerate basing point $c_y$ directly.
Puncturing the fiber at $c_y$ and splitting the $y$-generator there makes the
side of approach a symbol in the presentation. No meridian-free package
matches the classified fingerprints at any coefficient, although such a package
satisfies the transport law with longer conjugators. The side of approach
determines the remaining data: the $y_1$-side meridian basing is compatible
with transport conjugator $r^{-1}$ (the $\mathrm{corr}_{Bs}$ conjugating path
of the $Bs$ correction) and push-off label $Ax$, whereas the $y_2$-side is
compatible with $x$-transport and push-off label $Ar^{-1}$. The cross-pairings
fail the fingerprint comparison. Hence $Ar^{-1}$ is the $y_2$-side
entry of a two-entry correspondence whose entries differ by the same
$c$-multiple. The remaining choice of side changes $n$ by one, and both arcs
are included in the computation of Section~\ref{sec:results}.

\bigskip
\noindent{\sc Bernd Johannes Wuebben, New York, NY,} \texttt{wuebben@gmail.com}

\end{document}